\newtheorem{theorem}{Theorem}
\newtheorem{proposition}[theorem]{Proposition}
\newtheorem{lemma}[theorem]{Lemma}
\theoremstyle{definition} 
\newtheorem{definition}[theorem]{Definition}
\newtheorem{remark}[theorem]{Remark}
\newtheorem{problem}{Problem}
\numberwithin{equation}{section}
\numberwithin{theorem}{section}
\newcommand{\bz}{\mathbb{Z}}
\newcommand{\br}{\mathbb{R}}
\newcommand{\bc}{\mathbb{C}}
\newcommand{\bp}{\mathbb{P}}
\newcommand{\bff}{\mathbb{F}}   
\newcommand{\bl}{\mathbb{L}}    
\newcommand{\bu}{\mathbb{U}}    
\newcommand{\M}{\mathcal{M}}
\newcommand{\E}{\mathcal{E}}    
\newcommand{\Fa}{\mathcal{F}}
\newcommand{\La}{\mathcal{L}}
\newcommand{\da}{\mathcal{D}}
\newcommand{\bv}{\mathbf{v}}
\newcommand{\rk}{\mathop\mathrm{rank}}
\newcommand{\Pic}{\mathop\mathrm{Pic}}
\newcommand{\BL}{\mathrm{bl}}
\newcommand{\mmod}{\ \mathrm{mod}\ }
\newcommand{\id}{\mathrm{id}}
\newcommand{\wvarphi}{{\widetilde{\varphi}}}
\title{On real anti-bicanonical curves with one double point on the $4$-th real Hirzebruch surface}
\author{Sachiko Saito}
\address{Department of Mathematics Education,\ Asahikawa Campus,\\
Hokkaido University of Education,\ Asahikawa, JAPAN}
\email{saito.sachiko@a.hokkyodai.ac.jp}
\begin{document}
\pagestyle{plain}

\begin{abstract}
We list up all the candidates for 
the real isotopy types of real anti-bicanonical curves with one real nondegenerate double point on the $4$-th real Hirzebruch surface $\br \bff_4$ 
by enumerating the connected components of the moduli space of 
real $2$-elementary K3 surfaces of type $(S,\theta) \cong ((3,1,1),- \id)$. 
We also list up all the candidates for 
the non-increasing simplest degenerations of real nonsingular anti-bicanonical curves on $\br \bff_4$. 
We find an interesting correspondence between 
the real isotopy types of real anti-bicanonical curves with one real nondegenerate double point on $\br \bff_4$ 
and the non-increasing simplest degenerations of real nonsingular anti-bicanonical curves on $\br \bff_4$. 
This correspondence is very similar to the one provided 
by the rigid isotopic classification of real sextic curves on $\br \bp^2$ with one real nondegenerate double point by I. Itenberg. 
\end{abstract}

\maketitle

\tableofcontents

\vspace{2cm}

-----------------------------------------------------------------------------------------------------------------------

{\footnotesize 
partially supported by JSPS Grant-in-Aid for Challenging Exploratory Research 25610001

2010 {\it AMS Mathematics Subject Classification}\ \ 14J28, 14P25, 14J10.
}

\newpage
\setcounter{section}{0}

\section{Introduction}

This paper is a continuation of \cite{NikulinSaito05} and \cite{NikulinSaito07}. 
In \cite{NikulinSaito05} the moduli spaces of ``($\da\br$)-nondegenerate" real K3 surfaces 
with non-symplectic holomorphic involutions (namely, real $2$-elementary K3 surfaces) are formulated 
and 
it is shown that the connected components of such a moduli space 
are in {\bf one to one correspondence} with 
the isometry classes of integral involutions of the K3 lattice of certain type 
(see Theorem \ref{theorem2005moduli} below and \cite{NikulinSaito05} for more precise statements). 
As its applications, we obtain 
the real isotopic classifications (more precisely, deformation classifications) of 
real nonsingular sextic curves on the real projective plane $\br \bp^2$, 
real nonsingular curves of bidegree $(4,4)$ on the real $\bp^1 \times \bp^1$ (hyperboloid, ellipsoid), and 
real nonsingular anti-bicanonical curves on the real Hirzebruch surfaces $\br \bff_m$ ($m=1,\,4$). 
Here the $m$-th Hirzebruch surface $\bff_m \ (m \geq 0)$ means the ruled surface over $\bp^1$ having an exceptional section $s$ with $s^2=-m$. 
$\bff_4$ has $2$ real structures (anti-holomorphic involutions). 
The real part of $\bff_4$ is homeomorphic to the $2$-dimensional torus or the empty set. 
On the other hand, 
$\bff_3$ has a unique real structure and its real part is homeomorphic to the Klein's bottle (see \cite{DIK2000}). 

We say a complex curve $A$ on a real Hirzebruch surface is {\bf real} 
if its anti-holomorphic involution can be restricted to the curve $A$. 
We say two real curves $\br A,\ \br A'$ on a real nonsingular surface $\br B$ are {\bf real isotopic} if 
there exists a continuous map $\Phi : \br B \times [0,1] \to \br B$ (a ``real isotopy from $\br A$ to $\br A'$") such that 
$\Phi_t := \Phi(\ ,t) : \br B \to \br B$ is a homeomorphism for any $t \in [0,1]$, $\Phi_0 = \id_{\br B}$, and $\Phi_1(\br A) = \br A'$. 
Moreover, two real curves $\br A,\ \br A'$ in a fixed class on a real nonsingular surface $\br B$ are {\bf rigidly isotopic} if 
there exists a real isotopy $\Phi : \br B \times [0,1] \to \br B$ from $\br A$ to $\br A'$ 
such that $\Phi_t(\br A)$ is contained in the same class for any $t \in [0,1]$. 

Using the same method as above, we obtain 
the real isotopic classifications of 
real nonsingular anti-bicanonical curves on the real Hirzebruch surfaces $\br \bff_2$ and $\br \bff_3$ 
(see also Theorem \ref{moduli311} and Remark \ref{F3} of this paper) in \cite{NikulinSaito07}. 
Especially, all the connected components of the moduli space of 
real $2$-elementary K3 surfaces of type $(S,\theta) \cong ((3,1,1),- \id)$, which are defined below, 
are enumerated in \cite{NikulinSaito07}. 
Here we should remark that 
any real $2$-elementary K3 surfaces of type $(S,\theta) \cong ((3,1,1),- \id)$ are 
($\da\br$)-{\bf nondegenerate} in the sense of \cite{NikulinSaito05}. 

Any $2$-elementary K3 surface $(X,\tau)$ of type $S \cong (3,1,1)$ has an elliptic fibration and 
a unique reducible fiber $E+F$, where the curves $E$ and $F$ are defined in \cite{NikulinSaito07} (see also Subsection \ref{RealK3-311} of this paper). 
There are two types of $F$, which are reducible and irreducible. 
Let $A$ be the fixed point set (curve) of $\tau$ on $X$, and $\pi: X \to Y:= X/\tau$ be the quotient map. 
$A$ intersects $\pi(E)$ at a single point. 
If we contract the curve $\pi(E)$ on $Y$ to a point, then we get the $3$-th Hirzebruch surface $\bff_3$, and 
the image of the curve $A$ is a real {\bf nonsingular} anti-bicanonical curve on $\bff_3$. 
This enables us to enumerate up (see \cite{NikulinSaito07}) 
all the real isotopy types of real nonsingular anti-bicanonical curves on $\br \bff_3$. 

However, on the other hand, 
if we contract the curve $\pi(F)$ on $Y$ to a point, then we get the $4$-th Hirzebruch surface $\bff_4$, and 
the image of the curve $A$ is real anti-bicanonical and has {\bf one real double point}. 
Even though a real $2$-elementary K3 surface of type $(S,\theta) \cong ((3,1,1),- \id)$ is always ($\da\br$)-{\bf nondegenerate}, 
the double point is possibly {\bf degenerate}, namely, a real cusp point. 
Thus, the main difficulty of the real isotopic classification of real anti-bicanonical curves on $\bff_4$ with one real double point 
is that 
the connected components (equivalently, the isometry classes of integral involutions of the K3 lattice) 
of the moduli space (\cite{NikulinSaito05}) of 
real $2$-elementary K3 surfaces of type $(S,\theta) \cong ((3,1,1),- \id)$ 
cannot distinguish degenerate and nondegenerate double points, 
equivalently, 
reducible $F$ and irreducible $F$. 
Hence, 
the connected components cannot distinguish 
the topological types (node, cusp, or isolated point) of the real double points (Remark \ref{realizability1}). 
This problem was left to the readers in \cite{NikulinSaito07} (see Remark 8). 

Thus, the aim of this paper is:\ 
{\it Classify the real isotopy types of 
real anti-bicanonical curves with one real nondegenerate double point on $\br \bff_4$.} 

First we list up all the candidates for the real isotopy types of 
real anti-bicanonical curves with one real nondegenerate double point on $\br \bff_4$ (Theorem \ref{isotopy-F4-double}) 
using some well-known topological interpretations (\cite{NikulinSaito05}, \cite{NikulinSaito07}) 
of some arithmetic invariants of integral involutions of the K3 lattice. 

Unfortunately, the realizability of each real isotopy type listed in Theorem \ref{isotopy-F4-double} 
has not been resolved in this paper. 
We only know that at least one of real isotopy types with nondegenerate double points 
can be realized for {\bf each} isometry class (see Remark \ref{realizability1}). 

In order to distinguish the real isotopy types, 
we should remove real $2$-elementary K3 surfaces which yield anti-bicanonical curves with degenerate double points on $\br \bff_4$ 
from the moduli space (period domain) in the sense of \cite{NikulinSaito05}. 
We follow Itenberg's argument (\cite{Itenberg92},\cite{Itenberg94},\cite{Itenberg95}) 
for the rigid isotopic classification of real sextic curves on $\br \bp^2$ with one nondegenerate double point. 
Lemma \ref{criterion} provides a sufficient condition for the double point to be non-degenerate. 

Moreover, according to his papers \cite{Itenberg92} and \cite{Itenberg94}, 
real curves of degree $6$ on $\br \bp^2$ with one nondegenerate double point 
are obtained by 
``non-increasing simplest degenerations" of real nonsingular curves of degree $6$ on $\br \bp^2$. 
Hence, we next list up the candidates for the non-increasing simplest degenerations of 
real nonsingular anti-bicanonical curves on $\br \bff_4$ (Theorem \ref{degenerations-F4-re-arran}). 

Then, we find an obvious interesting correspondence between 
the real isotopy types of curves with one real nondegenerate double point on $\br \bff_4$ (Theorem \ref{isotopy-F4-double}) 
and the non-increasing simplest degenerations of nonsingular curves on $\br \bff_4$ (Theorem \ref{degenerations-F4-re-arran}). 
See Remark \ref{interesting-correspondence}. 
We also get some similar properties (Lemma \ref{degene-from-nonsing}) 
to the degenerations of real nonsingular curves of degree $6$ on $\br \bp^2$. 

In the final section \ref{period domain and problems}, we review and confirm the periods of 
marked real $2$-elementary K3 surfaces $((X,\tau,\varphi),\ \alpha)$ of type $(S,\theta)$ 
satisfying $\alpha \circ \varphi_* \circ \alpha^{-1} = \psi$ for a fixed integral involution $\psi$, 
and give some further problems which are inspired by the argument in \cite{Itenberg92} and \cite{Itenberg94}. 

\medskip

{\bf Acknowledgments.}\ The author would like to thank the referee for his careful reading and valuable advice, 
Professor Ilia Itenberg for his inspiring papers, and 
Professor Viacheslav Nikulin for his constant encouragement. 

\section{Real $2$-elementary K3 surfaces of type $(S,\theta) \cong ((3,1,1),- \id)$}

\subsection{Real $2$-elementary K3 surfaces} \label{real_2-elementary K3}

\begin{definition}[Real $2$-elementary K3 surface]
A triple $(X,\tau,\varphi)$ is called 
a {\bf real K3 surface with non-symplectic (holomorphic) involution} 
(or {\bf real $2$-elementary K3 surface}) 
if\\
\ \ {\rm (1)}\ $(X,\tau)$ is a K3 surface $X$ with a non-symplectic holomorphic involution $\tau$, i.e., 
``$2$-elementary K3 surface" (\cite{Nikulin81}).\\
\ \ {\rm (2)}\ $\varphi$ is an anti-holomorphic involution on $X$.\\
\ \ {\rm (3)}\ $\varphi \circ \tau = \tau \circ \varphi$
\end{definition}

Note that any K3 surface with a non-symplectic holomorphic involution is algebraic. 

For a real K3 surface with non-symplectic involution $(X,\tau,\varphi)$, 
we call $\wvarphi := \tau \circ \varphi = \varphi \circ \tau$ 
the {\bf related (anti-holomorphic) involution} of $\varphi$ (\cite{NikulinSaito05}). 
The triple $(X,\tau,\tau \circ \varphi)$ is also 
a real K3 surface with non-symplectic involution. 

For a $2$-elementary K3 surface $(X,\tau)$, we denote by 
$${H_2}_+(X, \bz)$$
the fixed part of $\tau_* : H_2(X, \bz) \to H_2(X, \bz)$. 
Note that ${H_2}_+(X, \bz) \subset N(X)$, where $N(X)$ is the Picard lattice of $X$. 

We fix an even unimodular lattice 
$$\bl_{K3}$$
of signature $(3,19)$. 
The isometry class of such lattices is unique (the K3 lattice). 

Let $\alpha : H_2(X, \bz) \to \bl_{K3}$ be an isometry (marking). 
If we temporary set $S := \alpha({H_2}_+(X, \bz))$, 
then $S$ is a primitive (,i.e., $\bl_{K3}/S$ is free,) 
hyperbolic (i.e., $S$ is of signature $(1,\rk S -1)$) 
$2$-elementary (i.e., $S^\ast /S \cong (\bz/2\bz)^a$ for some nonnegative integer $a$) 
sublattice of $\bl_{K3}$. 

\bigskip

Now let 
$$S \ \ \ (\subset \bl_{K3})$$
be a primitive hyperbolic $2$-elementary sublattice of the K3 lattice $\bl_{K3}$. 

\begin{definition}
We set $r(S) := \rk S$.\ \ \ The non-negative integer $a(S)$ is defined by the equality
$$S^\ast /S \cong (\bz/2\bz)^{a(S)}.$$
We define
$$
\delta (S) := \left\{
\begin{array}{cl}
0 &\ \ \ \mbox{if}\ z \cdot \sigma (z) \equiv 0 \mmod 2 \ \ (\forall z \in \bl_{K3})\\
1 &\ \ \ \mbox{otherwise,}
\end{array}
\right.
$$
where we define 
$$\sigma : \bl_{K3} \to \bl_{K3}$$ to be 
the unique integral involution whose fixed part is $S$. 
\end{definition}

It is known that the triplet 
$$(r(S),a(S),\delta(S))$$
determines the isometry class of the lattice $S$ (\cite{Nikulin81}). 

If $S$ and $S^\prime$ are primitive hyperbolic $2$-elementary sublattices of the K3 lattice $\bl_{K3}$, 
and $S$ is isometric to $S^\prime$; 
then there exists an automorphism $f$ of $\bl_{K3}$ such that $f(S^\prime) = S$ (\cite{AlexeevNikulin2006}, \cite{Nikulin79}). 
Hence, if $(X,\tau)$ and $(X^\prime,\tau^\prime)$ are two $2$-elementary K3 surfaces, 
$\alpha : H_2(X, \bz) \to \bl_{K3}$ is an isometry, and 
${H_2}_+(X, \bz)$ is isometric to ${H_2}_+(X^\prime, \bz)$; 
then there exists an isometry (marking) $\alpha^\prime : H_2(X^\prime, \bz) \to \bl_{K3}$ such that 
$\alpha^\prime({H_2}_+(X^\prime, \bz)) = \alpha({H_2}_+(X, \bz))$. 
Thus, only the isometry class of ${H_2}_+(X, \bz)$ is essential for $2$-elementary K3 surfaces $(X,\tau)$. 
Henceforth, we often fix an isometry class, equivalently, the invariants $(r(S),a(S),\delta(S))$ 
instead of fixing a particular sublattice. 

\medskip

We quote the following formulation from \cite{NikulinSaito05}. 
We additionally fix a half-cone 
$V^+(S)$ of the cone $V(S):= \{x\in S \otimes \br\ |\ x^2>0\}$. 
We also fix a fundamental chamber $\M \subset V^+(S)$ for the group $W^{(-2)}(S)$ generated by reflections in all elements with square $(-2)$ in $S$. 
This is equivalent to fixing a fundamental subdivision 
$$\Delta(S)=\Delta(S)_+\cup -\Delta(S)_+$$
of all elements with square $-2$ in $S$. 
$\M$ and $\Delta(S)_+$ define each other by the condition $\M \cdot \Delta(S)_+ \ge 0$. 

\medskip

Let $(X,\tau)$ be a $2$-elementary K3 surface, and $\alpha : H_2(X, \bz) \to \bl_{K3}$ be a marking 
such that $\alpha({H_2}_+(X, \bz)) = S$. 
We always assume that $\alpha_{\br}^{-1}(V^+(S))$ contains a hyperplane section of $X$ 
and the set $\alpha^{-1}(\Delta(S)_+)$ contains only classes of effective curves of $X$. 

\bigskip

Now let $\theta$ be an integral involution of $S$. 

\begin{definition}[the action of $\varphi$ on ${H_2}_+(X, \bz)$]
A real $2$-elementary K3 surface $(X,\tau,\varphi)$ is called that of {\bf type $(S,\theta)$} 
if there exists an isometry (marking) 
$$\alpha : H_2(X, \bz) \cong \bl_{K3}$$
such that 
$\alpha({H_2}_+(X, \bz)) = S$ and the following diagram commutes:
$$
\begin{CD}
{H_2}_+(X, \bz) @> {\alpha}>>S\\
@V{\varphi_*}VV @VV{\theta}V\\
{H_2}_+(X, \bz) @> {\alpha}>>S .
\end{CD}
$$
\end{definition}

\begin{definition}[marked real $2$-elementary K3 surfaces] \label{marked_real_K3}
A pair $((X,\tau,\varphi),\ \alpha)$ of a real $2$-elementary K3 surface $(X,\tau,\varphi)$ of type $(S,\theta)$ and an isometry (marking) 
$\alpha : H_2(X, \bz) \cong \bl_{K3}$ 
such that 
\begin{itemize}
\item $\alpha({H_2}_+(X, \bz)) = S$,\\
\item $\alpha \circ \varphi_* = \theta \circ \alpha \ \text{on} \ {H_2}_+(X, \bz)$,\\
\item $\alpha_{\br}^{-1}(V^+(S))$ contains a hyperplane section of $X$ and\\
\item the set $\alpha^{-1}(\Delta(S)_+)$ contains only classes of effective curves of $X$ 
\end{itemize}
is called 
a {\bf marked real $2$-elementary K3 surface of type $(S,\theta)$}.
\end{definition}

\medskip

Note that we have 
$\theta(V^+(S)) = -V^+(S)$
and
$\theta(\Delta(S)_+) = -\Delta(S)_+$
for a marked real $2$-elementary K3 surface of type $(S,\theta)$. 

\bigskip

\begin{definition}[Integral involution $\psi$ of $\bl_{K3}$ of type $(S,\theta)$]
Let $S$ be a hyperbolic $2$-elementary sublattice of $\bl_{K3}$, 
$\theta : S \to S$ be an integral involution of the lattice $S$, and 
$\psi : \bl_{K3} \to \bl_{K3}$ be an integral involution of the lattice $\bl_{K3}$ such that the following diagram commutes:
$$
\begin{array}{rcl}
         S           & \subset &  \bl_{K3}          \\
\theta \ \downarrow  &         &  \downarrow \ \psi \\
         S           & \subset &  \bl_{K3} .
\end{array}
$$
We say such a pair $(\bl_{K3},\psi)$ (or $\psi$) 
an {\bf integral involution of $\bl_{K3}$ of type $(S,\theta)$}. 
\end{definition}

Let $((X,\tau,\varphi),\ \alpha)$ be a marked real $2$-elementary K3 surface of type $(S,\theta)$. 
If we set 
$$\psi := \alpha \circ \varphi_* \circ \alpha^{-1} : \bl_{K3} \to \bl_{K3},$$
then we have $\psi(S) = S$, and $\psi (x) = \theta (x)$ for every $x \in S$ 
because $\alpha^{-1} (x) \in {H_2}_+(X, \bz)$. 
Hence, $(\bl_{K3},\psi)$ is an integral involution of $\bl_{K3}$ of type $(S,\theta)$.

\begin{definition}[the associated integral involution]
We call the integral involution $\psi$ of $\bl_{K3}$ of type $(S,\theta)$ 
{\bf the associated integral involution} of $\bl_{K3}$ with a marked real $2$-elementary K3 surface $((X,\tau,\varphi),\ \alpha)$ of type $(S,\theta)$ 
if the following diagram commutes:
$$
\begin{CD}
H_2(X, \bz) @>{\alpha}>>\bl_{K3} \\
@V{\varphi_*}VV @VV{\psi}V\\
H_2(X, \bz) @>{\alpha}>>\bl_{K3} .
\end{CD}
$$
\end{definition}

Note that the fixed part $\bl_{K3}^\psi$ of $\psi$ is hyperbolic for any associated integral involution of $\bl_{K3}$. 

\medskip

\begin{definition}[($\da\br$)-nondegenerate, \cite{NikulinSaito05}]\label{da-degenerate}
$x \in N(X)\otimes \br$ is {\bf nef} if $x\not=0$ and $x\cdot C\ge 0$ for any effective curve on $X$. 
We say that a $2$-elementary K3 surface $(X,\tau)$ of type $S$ is {\bf ($\da$)-degenerate} if 
there exists $h \in \M$ such that $h$ is not nef. 
This is equivalent to the existence of an exceptional curve (i.e., irreducible and having negative self-intersection) 
with square $-2$ 
on the quotient surface $Y:=X/\tau$. 
This is also equivalent to have an element $\delta \in N(X)$ with $\delta^2=-2$ such that 
$\delta=(\delta_1+\delta_2)/2$ where $\delta_1\in S$, $\delta_2\in S^\perp_{N(X)}$ and $\delta_1^2=\delta_2^2=-4$. 

A real $2$-elementary K3 surface $(X,\tau,$ $\varphi)$ of type $(S,\theta)$ is {\bf ($\da\br$)-degenerate} 
if there exists a real element $h\in S_-\cap \M$ which is not nef, where we set 
$S_{\pm} := \{ x\in S \,|\, \theta(x) = \pm x \}$. 
This is equivalent to 
have an element $\delta \in N(X)$ with $\delta^2=-2$ such that 
$\delta=(\delta_1+\delta_2)/2$ where $\delta_1\in S$, $\delta_2\in S^\perp_{N(X)}$ 
and $\delta_1^2=\delta_2^2=-4$, and 
$\delta_1$ must be orthogonal to an element $h\in S_-\cap \text{int}(\M)$ with $h^2>0$. 
Here $\text{int}(\M)$ denote the interior part of $\M$, 
i. e., the polyhedron $\M$ without its faces. 
\end{definition}


Let $\Delta(S,L)^{(-4)}$ be the set of all elements $\delta_1$ in $S$ such that $\delta_1^2=-4$ 
and there exists $\delta_2\in S^\perp_L$ such that $(\delta_2)^2=-4$ and 
$\delta=(\delta_1+\delta_2)/2\in L$. 
Let $W^{(-4)}(S,L)\subset O(S)$ 
be the group
generated by reflections in all roots from $\Delta(S,L)^{(-4)}$,
and $W^{(-4)}(S,L)_\M$ be the stabilizer subgroup of $\M$ 
in $W^{(-4)}(S,L)$. 
Let 
$$G$$
be the subgroup generated by reflections $s_{\delta_1}$ in 
all elements $\delta_1\in \Delta(S,L)^{(-4)}$ 
which are contained either in $S_+$ or in $S_-$ and 
satisfy $s_{\delta_1}(\M)=\M$. 
(Then $G$ is a subgroup of $W^{(-4)}(S,L)_\M$. )


\begin{definition}[Isometries with respect to the group $G$]
Let $(\bl_{K3},\psi_1)$ and $(\bl_{K3},\psi_2)$ be 
two integral involutions of $\bl_{K3}$ of type $(S,\theta)$. 
An {\bf isometry with respect to the group} $G$ 
from $(\bl_{K3},\psi_1)$ to $(\bl_{K3},\psi_2)$ 
means 
an isometry $f: \bl_{K3} \to \bl_{K3}$ such that $f(S)=S$, ${f|}_S \in G$, and 
the following diagram commutes:
$$
\begin{CD}
\bl_{K3} @>{f}>>\bl_{K3} \\
@VV\psi_1 V@V\psi_2 VV\\
\bl_{K3} @>{f}>>\bl_{K3} .
\end{CD}
$$
\end{definition}

In the above definition, 
remark that $\theta \circ {f|}_S = {f|}_S \circ \theta$ on $S$, hence, 
${f|}_S$ is at least an automorphism of $(S,\theta)$. 
However, we require the condition ``${f|}_S \in G$".

\begin{definition}\label{auto-psi}
We say 
two integral involutions $(\bl_{K3},\psi_1)$ and $(\bl_{K3},\psi_2)$ of type $(S,\theta)$ 
are 
{\bf isometric with respect to the group $G$} 
if there exists an isometry  with respect to the group $G$ 
from $(\bl_{K3},\psi_1)$ to $(\bl_{K3},\psi_2)$.

By an {\bf automorphism} of an integral involution $(\bl_{K3},\psi)$ of type $(S,\theta)$ {\bf with respect to the group $G$} 
we mean 
an isometry with respect to the group $G$ from $(\bl_{K3},\psi)$ to itself.
Namely, 
an isometry $f: \bl_{K3} \to \bl_{K3}$ which satisfies that 
$$\psi \circ f = f \circ \psi,\ f(S)=S \ \text{and} \  {f|}_S \in G.$$
\end{definition}

\begin{definition}[analytically isomorphic with respect to $G$] \label{analytic-iso}
Two marked real $2$-elementary K3 surfaces 
$((X,\tau,\varphi),\ \alpha)$ and 
$((X^\prime,\tau^\prime,\varphi^\prime),\alpha^\prime)$ 
of type $(S,\theta)$ 
are {\bf analytically isomorphic with respect to the group} $G$ 
if 
there exists an analytic isomorphism 
$f : X \to X^\prime$ such that 
$f \circ \tau = \tau^\prime \circ f$, $f \circ \varphi = \varphi^\prime \circ f$ and 
$\alpha^\prime \circ f_* \circ \alpha^{-1}|S \in G $.    
\end{definition}

\medskip

By considering their associated integral involutions, 
we get 
a natural map from the moduli space of marked real $2$-elementary K3 surfaces of type $(S,\theta)$ 
to the set of 
isometry classes with respect to $G$ 
of integral involutions of $\bl_{K3}$ of type $(S,\theta)$ such that the fixed part $\bl_{K3}^\psi$ of $\psi$ is hyperbolic. 

\begin{theorem}[\cite{NikulinSaito05}]\label{theorem2005moduli}
The natural map above gives a bijective correspondence between 
the connected components of the moduli space of ($\da\br$)-nondegenerate 
marked real $2$-elementary K3 surfaces of type $(S,\theta)$ 
and 
the set of 
isometry classes with respect to $G$ 
of integral involutions of $\bl_{K3}$ of type $(S,\theta)$ such that the fixed part $\bl_{K3}^\psi$ of $\psi$ is hyperbolic. 
\end{theorem}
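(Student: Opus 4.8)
The plan is to reduce the statement to the marked moduli theory already developed in \cite{NikulinSaito05} for ($\da\br$)-nondegenerate real $2$-elementary K3 surfaces, adapted to record the extra datum of the chamber $\M$ (equivalently the subdivision $\Delta(S)_+$) and the reflection group $G$. First I would recall the standard surjectivity and local Torelli picture: the period domain $\Omega(S,\theta)$ of marked real $2$-elementary K3 surfaces of type $(S,\theta)$ is (an open subset, after removing period hyperplanes, of) a product of real forms of the period domain of K3 surfaces, and by the global Torelli theorem together with surjectivity of the period map the points of $\Omega(S,\theta)$ are in bijection with isomorphism classes of marked pairs $((X,\tau,\varphi),\alpha)$. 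The first step is therefore to identify, for a fixed associated integral involution $\psi$, the locus $\Omega_\psi \subset \Omega(S,\theta)$ of periods whose marked surface has exactly that $\psi$; this is a ``real part'' of a Hermitian-symmetric-type domain cut out by the condition $\psi|_{S^\perp_{\bl_{K3}}}$ acting appropriately on the transcendental period, and one checks it is nonempty and connected (the ($\da\br$)-nondegeneracy hypothesis is what guarantees the removed walls do not disconnect it — this is exactly the content invoked from \cite{NikulinSaito05}).

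Next I would compare $\Omega_\psi$ and $\Omega_{\psi'}$ for two integral involutions $\psi,\psi'$ of type $(S,\theta)$ with hyperbolic fixed lattices. The key step is: the two period loci map to the same connected component of the \emph{unmarked} moduli space if and only if there is an isometry $f$ of $\bl_{K3}$ with $f\circ\psi = \psi'\circ f$ carrying one locus to the other, and — because changing the marking $\alpha$ must preserve the chosen half-cone $V^+(S)$ and the chamber $\M$ (equivalently the subdivision $\Delta(S)_+$), up to the action of the relevant reflection groups — the admissible such $f$ are precisely those with $f(S)=S$ and $f|_S \in G$. Here one uses that $W^{(-2)}(S)$ acts simply transitively on chambers and the $(-4)$-reflections generating $G$ are exactly the ones realized by analytic isomorphisms fixing $\M$ (this is where Definition~\ref{analytic-iso} matches the arithmetic side). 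So two marked families give analytically isomorphic-with-respect-to-$G$ surfaces iff their $\psi$'s are isometric with respect to $G$.

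The final step assembles these into the bijection: the map sending a connected component of the moduli space of ($\da\br$)-nondegenerate marked real $2$-elementary K3 surfaces of type $(S,\theta)$ to the $G$-isometry class of its associated integral involution is (i) \emph{well-defined}, since $\psi$ is locally constant on the moduli space — deforming within a component cannot change the integral lattice datum — and $\psi$'s arising from the same component differ by a marking change, hence are $G$-isometric; (ii) \emph{surjective}, by the surjectivity of the period map applied to each $\psi$ with $\bl_{K3}^\psi$ hyperbolic (hyperbolicity is what makes the corresponding period locus contain genuine K3 periods); and (iii) \emph{injective}, by the comparison in the previous paragraph. One must also note that each $\Omega_\psi$ is connected (not a disjoint union), so a $G$-isometry class corresponds to a \emph{single} component rather than several — again this is precisely the ($\da\br$)-nondegeneracy input.

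I expect the main obstacle to be step (iii) and the connectedness claim in step one: one has to show that after deleting all the relevant period hyperplanes (those coming from $(-2)$-classes and from the ``$(\da\br)$-degenerating'' $(-4)+(-4)$ configurations), the remaining period locus $\Omega_\psi$ is still connected, and that every isometry of $\bl_{K3}$ intertwining two $\psi$'s and respecting the chamber structure can be normalized to lie in $G$ on $S$. This requires the structural analysis of $W^{(-4)}(S,L)_\M$ and its subgroup $G$ together with Nikulin's discriminant-form techniques for extending isometries of $S$ to $\bl_{K3}$; but since all of this is carried out in \cite{NikulinSaito05}, the proof here is essentially a citation of that theorem, and I would simply restate how the hypotheses (hyperbolic $\bl_{K3}^\psi$, type $(S,\theta)$, ($\da\br$)-nondegeneracy) line up with its conclusion.
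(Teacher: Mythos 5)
The paper offers no proof of Theorem \ref{theorem2005moduli}: it is stated as a quotation from \cite{NikulinSaito05}, and your outline of the period-map/global Torelli argument, deferring the substance to that reference, matches the intended source and approach. The one small inaccuracy is your claim that each $\Omega_\psi$ is connected --- as the paper's final section notes, it has two components interchanged by $-\psi$, which is an automorphism with respect to $G$, so the conclusion (one moduli component per $G$-isometry class) still stands after passing to $\Omega_\psi/-\psi$.
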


\subsection{Elliptic fibrations of $2$-elementary K3 surfaces of type $S \cong (3,1,1)$} \label{RealK3-311}
We quote some basic facts from \cite{AlexeevNikulin2006}. 
Let $(X,\tau)$ be a $2$-elementary K3 surface of type $S$ with invariants 
$$(r(S),a(S),\delta(S)) = (3,1,1),$$
and 
$A := X^{\tau}$ be  the fixed point set (curve) of $\tau$. 
Then we have 
$$A = A_0 \cup A_1\ \ \mbox{(disjoint union)},$$
where $A_0$ is a nonsingular rational curve ($\cong \bp^1$) with $A_0^2=-2$ 
and \ $A_1$ is a nonsingular curve of genus $9$.
$(X,\tau)$ has a structure of an elliptic pencil $|E+F|$ with its section $A_0$ and 
the unique reducible fiber $E+F$ having the following properties below:
\begin{description}
\item[(i)] $E$ is an irreducible nonsingular rational curve with $E^2=-2$ and $E\cdot A_0=1$.
\item[(ii)] $E\cdot F=2$, \ $F^2=-2$, \ $F\cdot A_0=0$, \ and \ 
$F$ is either an irreducible nonsingular rational curve (type {\bf IIa}), or 
the union of two irreducible nonsingular rational curves $F^\prime$ and $F^{\prime\prime}$ 
which are conjugate by $\tau$ and $F^\prime \cdot F^{\prime\prime} = 1$ (type {\bf IIb}) 
(that is, the reducible fiber $E+F$ corresponds to the extended root system $\widetilde{\mathbb{A}_1}$ or $\widetilde{\mathbb{A}_2}$). 
We have $(F^\prime)^2 = (F^{\prime\prime})^2 = -2$. See also \S 2.4 of \cite{AlexeevNikulin2006}. 
\item[(iii)] The classes $[A_0]$,\ $[E]$ and $[F]$ generate 
the lattice ${H_2}_+(X, \bz) \ (\cong S)$.
Moreover, 
$A_1 \cdot E = 1,\ \ A_1 \cdot F = 2$.
The Gram matrix of the lattice ${H_2}_+(X, \bz)$ 
with respect to the basis $[E],\ [F]$ and $[A_0]$ is as follows:
$$
\begin{array}{cccc}
        & {[E]}   & {[F]} & {[A_0]} \\
{[E]}   &   -2    &       &         \\
{[F]}   &    2    &  -2   &         \\
{[A_0]} &    1    &   0   &   -2
\end{array}
$$
\end{description}

\bigskip

We next consider the quotient complex surface 
$$Y := X/\tau$$
and let $\pi : X \to Y$
be the quotient map. 
Then, $A$, considered in $Y$, is contained in $|-2K_Y|$ (an anti-bicanonical curve on $Y$). 

We define the curves:
$$e:=\pi(E) \ \ \ \text{and} \ \ \ f:= \pi(F) .$$
If $F$ is the union of two nonsingular rational curves $F^\prime$ and $F^{\prime\prime}$ 
which are conjugate by $\tau$ and $F^\prime \cdot F^{\prime\prime} = 1$, then 
$$f = \pi(F) = \pi(F^\prime \cup F^{\prime\prime}) = \pi(F^\prime) = \pi(F^{\prime\prime}).$$

\medskip

We use the same symbols $A_0$ and $A_1$ for their images in $Y$ by $\pi$. 
Then, 
the Picard group $\Pic(Y)$ of $Y$ is generated by the classes $[e]$, $[f]$ and $[A_0]$. 
The Gram matrix of $\Pic(Y)$ with respect to the basis $[e],\ [f]$ and $[A_0]$ is as follows:
$$
\begin{array}{cccc}
        &  {[e]}  &  {[f]}  & {[A_0]} \\
{[e]}   &    -1   &         &         \\
{[f]}   &     1   &   -1    &         \\
{[A_0]} &     1   &    0    &   -4
\end{array}
$$

For $A_1$, we have 
$$A_1\cdot e =  1 \ \ \ \mbox{and} \ \ \ A_1\cdot f = 2.$$

\medskip

We have:

{\bf (1)}\ $F$ is a nonsingular rational curve if and only if 
$A_1$ intersects $f$ in {\bf two distinct points}. 

{\bf (2)}\ $F$ is a union of two nonsingular rational curves if and only if 
$A_1$ {\bf touches} $f$.

\begin{remark}[\cite{NikulinSaito07}] \label{G=1}
For any {\bf real} $2$-elementary K3 surface $(X,\tau,\varphi)$ of type $(S,\theta)$ with $S \cong (3,1,1)$, 
we have 
$$\theta  =  - \id$$
and 
$$G = \{ \id \} ,$$
where $\id$ stands for the identity map on $S$. 
\end{remark}

\subsection{Enumeration of the connected components of the moduli space} \label{enumeration}

For any $2$-elementary K3 surface $(X,\tau)$ of type $S \cong (3,1,1)$, 
all exceptional curves on $Y$ are exactly the curves \ $e,\ f$ and $A_0$. 
Hence, $(X,\tau)$ is ($\da$)-nondegenerate and 
any real $2$-elementary K3 surface $(X,\tau,\varphi)$ of type $(S,\theta) \cong ((3,1,1),- \id)$ 
is ($\da\br$)-nondegenerate (see Definition \ref{da-degenerate}). 
By Theorem \ref{theorem2005moduli}, we have: 
\begin{theorem}[\cite{NikulinSaito07}CTheorem 1] \label{moduli311}
The connected components of the moduli space (in the sense of \cite{NikulinSaito05}) of 
marked real $2$-elementary K3 surfaces $((X,\tau,\varphi),\ \alpha)$ of type $(S,\theta) \cong ((3,1,1),- \id)$ 
are in bijective correspondence with 
the isometry classes of 
integral involutions of $\bl_{K3}$ of type $(S,\theta) \cong ((3,1,1),- \id)$ with respect to $G = \{ \id \}$. 
\end{theorem}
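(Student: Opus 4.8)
The plan is to derive Theorem \ref{moduli311} as an immediate consequence of the general bijection in Theorem \ref{theorem2005moduli} together with the two structural facts already established for the case $S \cong (3,1,1)$: first, that every $2$-elementary K3 surface of this type is $(\da)$-nondegenerate (and hence every real one of type $((3,1,1),-\id)$ is $(\da\br)$-nondegenerate), and second, that $G = \{\id\}$ by Remark \ref{G=1}. So the argument is essentially a matter of checking that the hypotheses of Theorem \ref{theorem2005moduli} are satisfied and then simplifying its conclusion.

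First I would verify the $(\da\br)$-nondegeneracy claim in detail. By Definition \ref{da-degenerate}, $(X,\tau)$ is $(\da)$-degenerate exactly when $Y = X/\tau$ carries an exceptional curve of square $-2$; but the preceding discussion (quoting \cite{AlexeevNikulin2006}) asserts that the only exceptional curves on $Y$ are $e$, $f$, and $A_0$, whose self-intersections are $-1$, $-1$, and $-4$ respectively by the Gram matrix of $\Pic(Y)$. None has square $-2$, so $(X,\tau)$ is $(\da)$-nondegenerate, and since $(\da\br)$-degeneracy requires in particular a $(-2)$-element $\delta = (\delta_1+\delta_2)/2$ with $\delta_1 \in S$, $\delta_1^2 = -4$ orthogonal to an interior vector of $\M$ of positive square — a configuration that forces $(\da)$-degeneracy — the real surface $(X,\tau,\varphi)$ of type $((3,1,1),-\id)$ is automatically $(\da\br)$-nondegenerate. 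Thus the moduli space appearing in Theorem \ref{theorem2005moduli} coincides with the full moduli space of marked real $2$-elementary K3 surfaces of this type; there is no nonempty degenerate locus to excise.

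Next I would invoke Theorem \ref{theorem2005moduli} with $(S,\theta) = ((3,1,1),-\id)$ to obtain a bijection between the connected components of that moduli space and the set of isometry classes \emph{with respect to $G$} of integral involutions $\psi$ of $\bl_{K3}$ of type $(S,\theta)$ with $\bl_{K3}^\psi$ hyperbolic. It remains only to observe that, because $G = \{\id\}$ by Remark \ref{G=1}, an isometry $f$ of $\bl_{K3}$ with $f(S) = S$ and $f|_S \in G$ is simply one with $f|_S = \id_S$; so ``isometric with respect to $G$'' is the stronger notion appearing in the statement, and the phrasing ``with respect to $G = \{\id\}$'' in Theorem \ref{moduli311} is just unwinding this. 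The condition that $\bl_{K3}^\psi$ be hyperbolic is exactly the condition noted after the definition of the associated integral involution, so it is part of the standing setup rather than an extra hypothesis; one may either carry it along verbatim or absorb it into the definition of the relevant set of integral involutions, as \cite{NikulinSaito07} does.

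I do not expect a serious obstacle: the proof is a specialization of a theorem stated earlier in the excerpt, and all the inputs ($(\da)$-nondegeneracy, $\theta = -\id$, $G = \{\id\}$) have already been recorded. If anything, the only point needing a word of care is the logical implication ``$(\da)$-nondegenerate $\Rightarrow$ $(\da\br)$-nondegenerate for $\theta = -\id$'', i.e.\ confirming from Definition \ref{da-degenerate} that the extra orthogonality requirement built into $(\da\br)$-degeneracy cannot be met once no $(-2)$-exceptional curve exists on $Y$; but this is exactly the content of the observation ``$(X,\tau)$ is $(\da)$-nondegenerate and any real $2$-elementary K3 surface of type $((3,1,1),-\id)$ is $(\da\br)$-nondegenerate'' already asserted at the top of Subsection \ref{enumeration}, so it can be cited rather than re-derived.
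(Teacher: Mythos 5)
Your proposal is correct and follows essentially the same route as the paper: the paper's own justification is precisely the observation that the only exceptional curves on $Y$ are $e$, $f$, $A_0$ (none of square $-2$), so every such surface is $(\da)$-nondegenerate and hence $(\da\br)$-nondegenerate, after which Theorem \ref{theorem2005moduli} and Remark \ref{G=1} give the statement. Your additional unwinding of why $(\da\br)$-degeneracy would force $(\da)$-degeneracy is a correct elaboration of what the paper leaves implicit.
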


\medskip

The complete isometry invariants of integral involutions $\psi$ of $\bl_{K3}$ of type $(S,\theta) \cong ((3,1,1),- \id)$ 
are the data
\begin{equation}
(r(\psi),\ a(\psi),\ \delta_{\psi S},\ H(\psi)). 
\label{geninv-real-311H}
\end{equation}

See \cite{NikulinSaito05}, Subsection 2.3 (after the equation (2.20)) 
for the definition of the invariant 
$\delta_{\psi S}$ and $H(\psi)$. 
Remark that 
$H(\psi)$ is a subgroup of the discriminant group $S_-/2S_- = S/2S = \bz/2\bz(\alpha([F]))$. 
Hence, $H(\psi) = 0$ or isomorphic to $\bz/2\bz$. 

All realizable data (\ref{geninv-real-311H}) are enumerated in \cite{NikulinSaito07}. 
We have 
$12$ data of ``Type 0" ($\Rightarrow H(\psi) = 0$),
$12$ data of ``Type Ia" , 
$39$ data of ``Type Ib" with $H(\psi) = 0$, and 
$39$ data of ``Type Ib" with $H(\psi) \cong \bz/2\bz$.
See also 
{\sc Table} \ref{H=0} for $H(\psi) = 0$ case and 
{\sc Table} \ref{H=F} for $H(\psi) \cong \bz/2\bz$ case, 
where $0$ or $1$ in each cell stands for the value of $\delta_{\psi S}$. 

Thus we have exactly {\bf 102} connected components of the moduli of 
real $2$-elementary K3 surfaces of type $(S,\theta) \cong ((3,1,1),- \id)$. 
By \cite{NikulinSaito05}, we find 
how invariants of an involution $\psi$ and 
those of its ``related integral involution" $\sigma \circ \psi$ are calculated from each other. 
Here $\sigma : \bl_{K3} \to \bl_{K3}$ is defined to be the integral involution of $\bl_{K3}$ whose fixed part is $S$. 
Identifying related pairs of anti-holomorphic involutions on each K3 surface, 
we have exactly {\bf 51} connected components. 

\begin{table}[!h]
\begin{center}
\begin{tabular}{|r||c|c|c|c|c|c|c|c|c|c|c|c|c|c|c|c|c|c|} \hline
9 &  &  &  &  &  &  &  &  & $1$ &  &  &  &  &  &  &  &  &  \\\hline
8 &  &  &  &  &  &  &  & $1$ &  & $0,1$ &  &  &  &  &  &  &  &  \\\hline
7 &  &  &  &  &  &  & $1$ &  & $1$ &  & $1$ &  &  &  &  &  &  &  \\\hline
6 &  &  &  &  &  & $1$ &  & $1$ &  & $0,1$ &  & $1$ &  &  &  &  &  &  \\\hline
5 &  &  &  &  & $1$ &  & $1$ &  & $1$ &  & $1$ &  & $1$ &  &  &  &  &  \\\hline
4 &  &  &  & $1$ &  & $0,1$ &  & $1$ &  & $0,1$ &  & $1$ &  & $0,1$ &  &  &  &  \\\hline
3 &  &  & $1$ &  & $1$ &  & $1$ &  & $1$ &  & $1$ &  & $1$ &  & $1$ &  &  &  \\\hline
2 &  & $0,1$ &  & $1$ &  & $0$ &  & $1$ &  & $0,1$ &  & $1$ &  & $0$ &  & $1$ &  &  \\\hline
1 & $1$ &  & $1$ &  &  &  &  &  & $1$ &  & $1$ &  &  &  &  &  & $1$ &  \\\hline
0 &  & $0$ &  &  &  &  &  &  &  & $0$ &  &  &  &  &  &  &  & $0$ \\\hline \hline
$a(\psi)$ / $r(\psi)$ & 1 & 2 & 3 & 4 & 5 & 6 & 7 & 8 & 9 & 10 & 11 & 12 & 13 & 14 & 15 & 16 & 17 & 18 \\\hline
\end{tabular}
\end{center}
\caption{All the data $(r(\psi),\ a(\psi),\ \delta_{\psi S})$ for $H(\psi) = 0$.}
\label{H=0}
\end{table}

\begin{table}[!h]
\begin{center}
\begin{tabular}{|r||c|c|c|c|c|c|c|c|c|c|c|c|c|c|c|c|c|c|} \hline
10 &  &  &  &  &  &  &  &  &  & $1$ &  &  &  &  &  &  &  &  \\\hline
9 &  &  &  &  &  &  &  &  & $0,1$ &  & $1$ &  &  &  &  &  &  &  \\\hline
8 &  &  &  &  &  &  &  & $1$ &  & $1$ &  & $1$ &  &  &  &  &  &  \\\hline
7 &  &  &  &  &  &  & $1$ &  & $0,1$ &  & $1$ &  & $1$ &  &  &  &  &  \\\hline
6 &  &  &  &  &  & $1$ &  & $1$ &  & $1$ &  & $1$ &  & $1$ &  &  &  &  \\\hline
5 &  &  &  &  & $0,1$ &  & $1$ &  & $0,1$ &  & $1$ &  & $0,1$ &  & $1$ &  &  &  \\\hline
4 &  &  &  & $1$ &  & $1$ &  & $1$ &  & $1$ &  & $1$ &  & $1$ &  & $1$ &  &  \\\hline
3 &  &  & $1$ &  & $0$ &  & $1$ &  & $0,1$ &  & $1$ &  & $0$ &  & $1$ &  & $0,1$ &  \\\hline
2 &  & $1$ &  &  &  &  &  & $1$ &  & $1$ &  &  &  &  &  & $1$ &  & $1$ \\\hline
1 & $0$ &  &  &  &  &  &  &  & $0$ &  &  &  &  &  &  &  & $0$ &  \\\hline \hline
$a(\psi)$ / $r(\psi)$ & 1 & 2 & 3 & 4 & 5 & 6 & 7 & 8 & 9 & 10 & 11 & 12 & 13 & 14 & 15 & 16 & 17 & 18 \\\hline
\end{tabular}
\end{center}
\caption{All the data $(r(\psi),\ a(\psi),\ \delta_{\psi S})$ for $H(\psi) \cong \bz/2\bz$.}
\label{H=F}
\end{table}

\subsection{The regions $A_+$ and $A_-$ in the real part $Y(\br)$ of $Y$}

Let $(X,\tau,\varphi)$ be 
a real $2$-elementary K3 surface of type $(S,\theta) \cong ((3,1,1),- \id)$. 
We use the notation in Subsection \ref{RealK3-311}. 
Let 
$$X_\varphi(\br)$$
denote the real part of $(X,\varphi)$, 
i.e., 
the fixed point set of $\varphi$, and let 
$$Y(\br)$$
be the real part of the quotient surface $Y$ with the real structure $\varphi_{\mathrm{mod}\ \tau}$. 

The real part 
$$\br A = \br A_0 \cup \br A_1$$
of the branch curve $A$ 
divides $Y(\br)$ into two regions
\footnote
{These two regions are called ``positive curves" in \cite{NikulinSaito05}.}
$$A_+ \ \ \text{and} \ \ A_-.$$

Either $A_+$ or $A_-$ is doubly covered by the real part $X_\varphi(\br)$, 
and the other by the real part $X_{\wvarphi}(\br)$, 
where we set $\wvarphi := \tau \circ \varphi$. 
Since $X_\varphi(\br)$ is always non-empty (\cite{NikulinSaito07}, p.27), 
$Y(\br)$, $A_+$, and $A_-$ are also non-empty. 
Regions $A_\pm$ could have several connected components. 

\medskip

We distinguish two regions $A_+$ and $A_-$ as follows.
\begin{definition}[The regions $A_+$ and $A_-$]\label{A_+A_-}
For a real $2$-elementary K3 surface $(X,\tau,\varphi)$ of type $(S,\theta) \cong ((3,1,1),- \id)$, 
we define the regions 
$A_+$ and $A_-$ ($\subset Y(\br)$) as follows:
$$
\left\{
\begin{array}{cll}
A_+ &:= \pi(X_\varphi(\br)) & \ \text{if}\ \ H(\psi) \cong \bz/2\bz,\\
A_- &:= \pi(X_\varphi(\br)) & \ \text{if}\ \ H(\psi) = 0.
\end{array}
\right.
$$
\end{definition}
Note that if $H(\psi) \cong \bz/2\bz$, then $H(\sigma \circ \psi)=0$.  

An important topological characterization of the invariant $H(\psi)$ is as follows. 
\begin{lemma}[\cite{NikulinSaito05}]
Suppose that the curve $F$ is irreducible. Then we have 
$H(\psi) \cong \bz/2\bz$ if and only if $[F_\varphi(\br)] = 0 \ \ \mbox{in} \ H_1(X_\varphi(\br);\bz/2\bz)$, 
where we set $F_\varphi(\br) := X_\varphi(\br) \cap F$.
\end{lemma}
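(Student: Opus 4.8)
\medskip

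The plan is to reduce both sides of the equivalence to one and the same arithmetic statement about the associated integral involution $\psi$, applied to the class $\delta := \alpha([F]) \in S \subset \bl_{K3}$. Since $[F] \in {H_2}_+(X,\bz)$ and $\theta = -\id$ on $S$ (Remark \ref{G=1}), $\delta$ is primitive with $\delta^2 = -2$ and $\psi(\delta) = \theta(\delta) = -\delta$; thus $\delta$ lies in the coinvariant lattice $\bl_{K3}^{-\psi} := \{x \in \bl_{K3} : \psi(x) = -x\}$. First, $F$ is $\varphi$-invariant: as $\varphi$ is anti-holomorphic it reverses orientation on any complex curve, so $[\varphi(F)] = -\varphi_*[F]$; but $\varphi_*[F] = -[F]$ because $\theta = -\id$, hence $[\varphi(F)] = [F]$, and since $F$ is an irreducible curve with $F^2 = -2$ it is the only effective divisor in its class, so $\varphi(F) = F$. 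Because $F$ is irreducible, $F \cong \bp^1$, so $F_\varphi(\br) = \mathrm{Fix}(\varphi|_F)$ is either a circle (a copy of $\bp^1(\br)$) or empty; in either case it is a $1$-cycle in $X_\varphi(\br)$, which is non-empty, so $[F_\varphi(\br)] \in H_1(X_\varphi(\br);\bz/2\bz)$ is defined.

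For the topological side I would use the real cycle class homomorphism $\BL$ of \cite{NikulinSaito05}: sending a $\varphi$-real divisor to the $\bz/2\bz$-homology class of its real locus defines a homomorphism from the group of $\varphi$-real divisor classes to $H_1(X_\varphi(\br);\bz/2\bz)$ with $\BL([F]) = [F_\varphi(\br)]$. (In $H_2(X,\bz)$ the $\varphi$-real classes form the $(-\varphi_*)$-invariant part, again because $\varphi$ reverses orientation on curves, so $[F]$ is a legitimate argument.) Since $\BL$ annihilates $2H_2(X,\bz)$, it factors through $\bl_{K3}^{-\psi}\otimes\bz/2\bz$ (after transport by $\alpha$). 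Using the Kharlamov--Nikulin description of the $\bz/2\bz$-homology of $X_\varphi(\br)$ in terms of $(\bl_{K3},\psi)$, as employed in \cite{NikulinSaito05}, I would identify the kernel of the induced map $\bl_{K3}^{-\psi}\otimes\bz/2\bz \to H_1(X_\varphi(\br);\bz/2\bz)$ with an explicit subspace $K$ built from $2\bl_{K3}^{\psi}$, $\bl_{K3}^{-\psi}$ and the gluing of $\bl_{K3}$ over the finite-index sublattice $\bl_{K3}^{\psi}\oplus\bl_{K3}^{-\psi}$. Then $[F_\varphi(\br)] = 0$ if and only if the residue of $\delta$ modulo $2$ lies in $K$.

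On the arithmetic side, by its definition in \cite{NikulinSaito05} (Subsection 2.3) the invariant $H(\psi)$ is precisely the subgroup of $\bz/2\bz\langle\alpha([F])\rangle$ recording the same gluing datum; unwinding that definition, $H(\psi) \cong \bz/2\bz$ holds exactly when the residue of $\delta = \alpha([F])$ modulo $2$ lies in the subspace $K$ above, and $H(\psi) = 0$ otherwise. Comparing the two characterisations gives the lemma. The case $F_\varphi(\br) = \emptyset$ needs no separate treatment: then $[F_\varphi(\br)] = 0$ tautologically, and the same lattice description likewise forces $H(\psi) \cong \bz/2\bz$.

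The main obstacle is the second paragraph: pinning down the subspace $K$ exactly and checking that it coincides with the one occurring in the definition of $H(\psi)$. This requires the full dictionary between the topology of the real locus of a real $2$-elementary K3 surface and the Kharlamov--Nikulin lattice invariants of $(\bl_{K3},\psi)$ --- in particular a clean formula for the class in $H_1(X_\varphi(\br);\bz/2\bz)$ of the real locus of a real $(-2)$-curve in terms of its class modulo $2$ and the gluing --- together with some care about the (harmless) transversality of $F$ with $X_\varphi(\br)$ and about which of the two regions $A_+,A_-$ equals $\pi(X_\varphi(\br))$. Once that formula is available, comparing with $H(\psi)$ is essentially bookkeeping.
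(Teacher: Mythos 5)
The paper gives no proof of this lemma: it is quoted verbatim from \cite{NikulinSaito05}, so there is no internal argument to compare your attempt against. Judged on its own terms, what you have written is a plan rather than a proof, and the gap sits exactly where the content of the lemma lies. Your first paragraph is correct but routine: $\varphi_*[F]=\theta([F])=-[F]$ and $[\varphi(F)]=-\varphi_*[F]$ give $\varphi(F)=F$, and $F\cong\bp^1$ forces $F_\varphi(\br)$ to be a circle or empty. The substance is in your second and third paragraphs, and there you explicitly defer the two decisive steps: (i) an actual formula for the class $[C_\varphi(\br)]\in H_1(X_\varphi(\br);\bz/2\bz)$ of the real locus of a $\varphi$-invariant $(-2)$-curve $C$ in terms of the residue of $\alpha([C])$ in $\bl_-\otimes\bz/2\bz$, together with an explicit identification of the kernel $K$ of the induced map; and (ii) the verification that the residue of $\alpha([F])$ lies in $K$ precisely when $H(\psi)\cong\bz/2\bz$ under the definition of $H(\psi)$ in Subsection 2.3 of \cite{NikulinSaito05}. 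You state that both follow from ``the full dictionary'' between the topology of $X_\varphi(\br)$ and the Kharlamov--Nikulin invariants of $(\bl_{K3},\psi)$, but that dictionary (the Smith-sequence computation of $H_*(X_\varphi(\br);\bz/2\bz)$ from the eigenlattices and gluing of $\psi$, plus the Viro-type homomorphism sending a real divisor class to the class of its real locus) \emph{is} the proof. As written, your argument shows only that both sides of the equivalence are lattice-theoretic; it does not show they are the \emph{same} lattice-theoretic condition.

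Two smaller points inside the same gap. First, the case $F_\varphi(\br)=\emptyset$ is not ``needing no separate treatment'': the left-hand side is then trivially zero, so the lemma asserts that emptiness of the real locus of $F$ already forces $H(\psi)\cong\bz/2\bz$, and this must be checked to be consistent with whatever formula you produce in step (i) (a cycle-class formula computed from $[F]\bmod 2$ must return $0$ whenever the real locus is empty, and the converse direction of the lemma requires that it return a nonzero class whenever the residue is outside $K$ and the real locus is nonempty). Second, your appeal to ``which of the two regions $A_+,A_-$ equals $\pi(X_\varphi(\br))$'' is circular in this context: in this paper the labelling of $A_\pm$ (Definition \ref{A_+A_-}) is \emph{defined} via the value of $H(\psi)$, so it cannot be used as an input to a proof of the present lemma. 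None of this makes your outline wrong --- it is the expected route, and is presumably how the statement is established in \cite{NikulinSaito05} --- but the proposal as it stands does not constitute a proof.
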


\bigskip

\subsection{Real anti-bicanonical curves with one real double point on $\br \bff_4$}

We now contract the exceptional curve 
$$f = \pi(F)$$
to a point. We get a blow up
$$\BL : Y \to \bff_4,$$
where $\bff_4$ is the $4$-th Hirzebruch surface. 

We set
$$s:=\BL(A_0),\ \ \ \ A^\prime_1 := \BL(A_1),\ \ \ \ c:=\BL(e),$$
and we have
$$\BL(A) = \BL(A_0) + \, \BL(A_1) =  s +  A^\prime_1 \ \ \in |-2K_{\bff_4}|,$$
i.e., $\BL(A)$ is an anti-bicanonical curve, 
where 
$s$ is the exceptional section of $\bff_4$ with $s^2 = -4$ and 
$c$ is a fiber of the fibration $\bff_4\to s$ with $c^2=0$. 
Since $s \cdot A^\prime_1 = 0$, 
{\bf $A^\prime_1$ does not intersect the section $s$}. 

We have 
$$-2K_{\bff_4} \sim 12c+4s.$$
It follows that
$$A^\prime_1 \ \ \in |12c+3s|.$$

\bigskip

Let 
$$
\br \BL(A) = \br s \cup  \br A^\prime_1
$$
be the real part of the curve $\BL(A)$, where 
$\br s$ and $\br A^\prime_1$ are the real parts of $s$ and $A^\prime_1$ respectively. 

\medskip

We set 
$$P_0 := \BL(f)\ \ (\in \br c) .$$

Since $A_1\cdot f =2$ in $Y$, 
$A^\prime_1$ has {\bf one real double point} $P_0$. 
If $A_1$ intersects with $f$ at two distinct points in $Y$, then they are real points or non-real conjugate points. 
In the former case $P_0$ is a {\bf real node}, and in the latter case it is a {\bf real isolated point}. 
Anyway it is a {\bf nondegenerate} double point. 
If $A_1$ touches to $f$ in $Y$, then $P_0$ is a {\bf real cusp}. (a {\bf degenerate} double point)

Thus, there are three topological types of the curve $\br A^\prime_1$ near the double point $P_0$. 

\noindent
{\bf Node case:}\ \ 
$P_0$ is a real node of $\br A^\prime_1$.

\noindent
{\bf Cusp case:}\ \ 
$P_0$ is a real cusp of $\br A^\prime_1$. (degenerate double point) 

\noindent
{\bf Isolated point case:}\ \ 
$P_0$ is a real isolated point of $\br A^\prime_1$.

See Figure \ref{a1c2} below.

\begin{figure}[!h]
\begin{center}
\includegraphics[width=4cm]{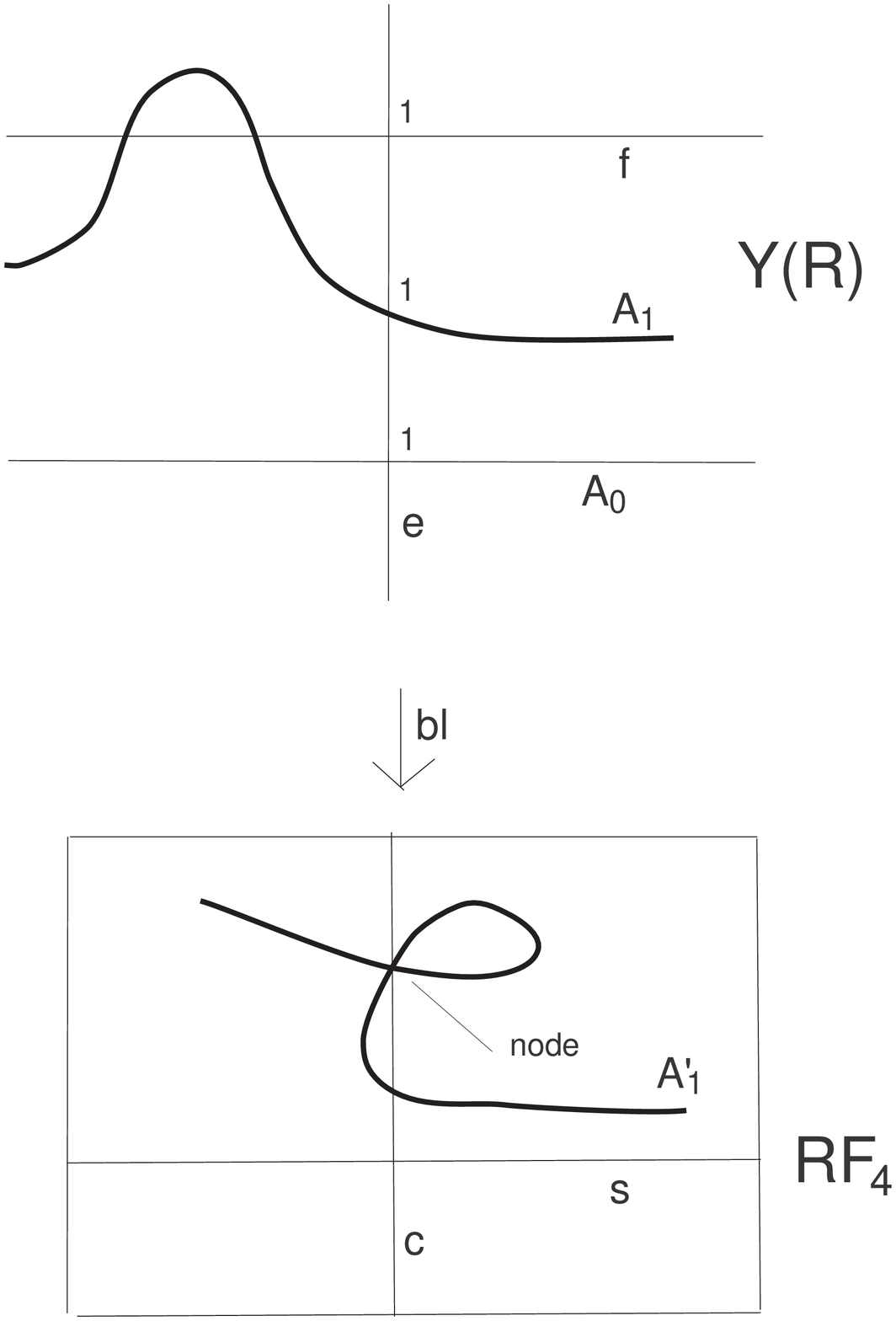}
\hspace{7mm}
\includegraphics[width=4cm]{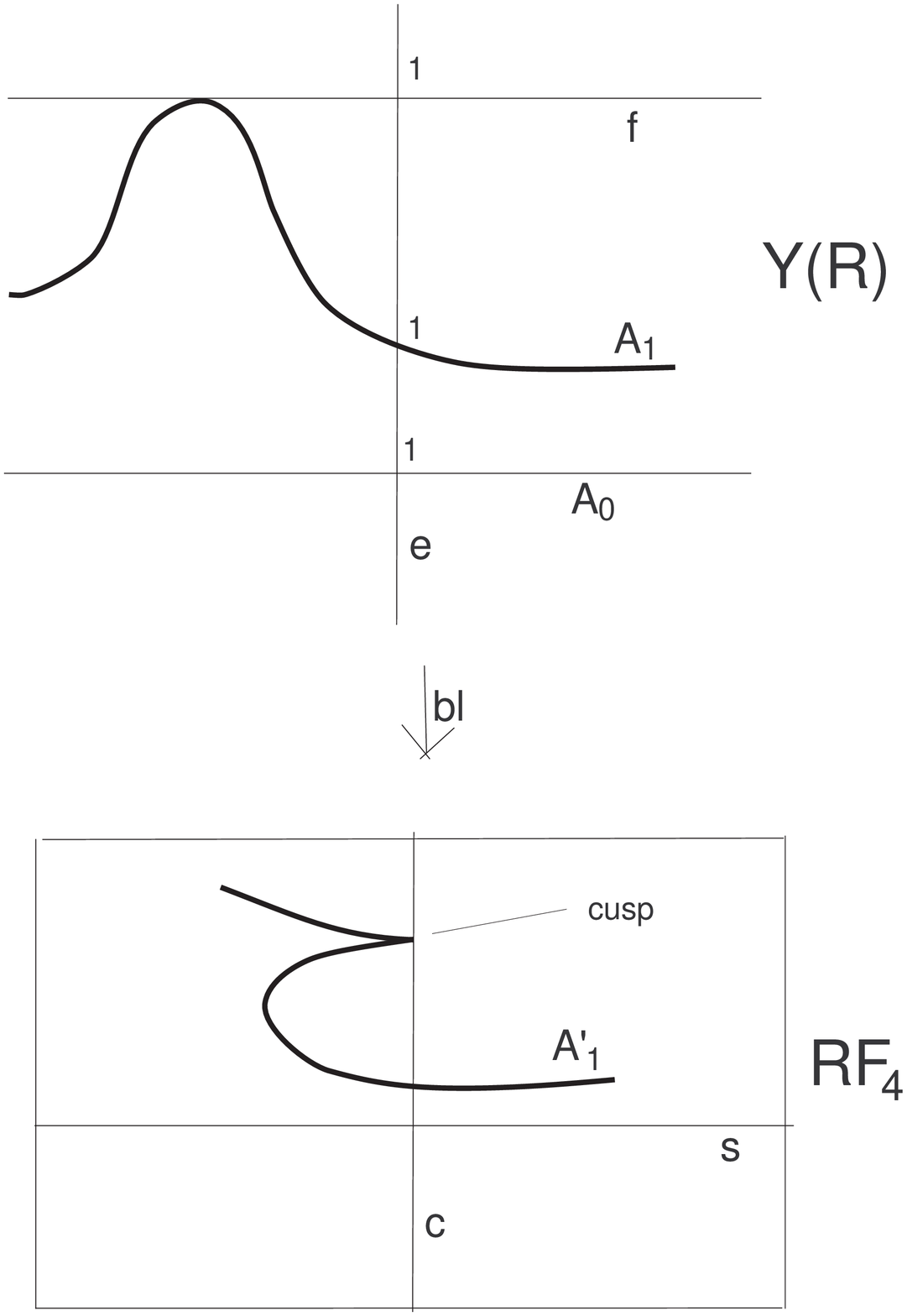}
\hspace{7mm}
\includegraphics[width=4cm]{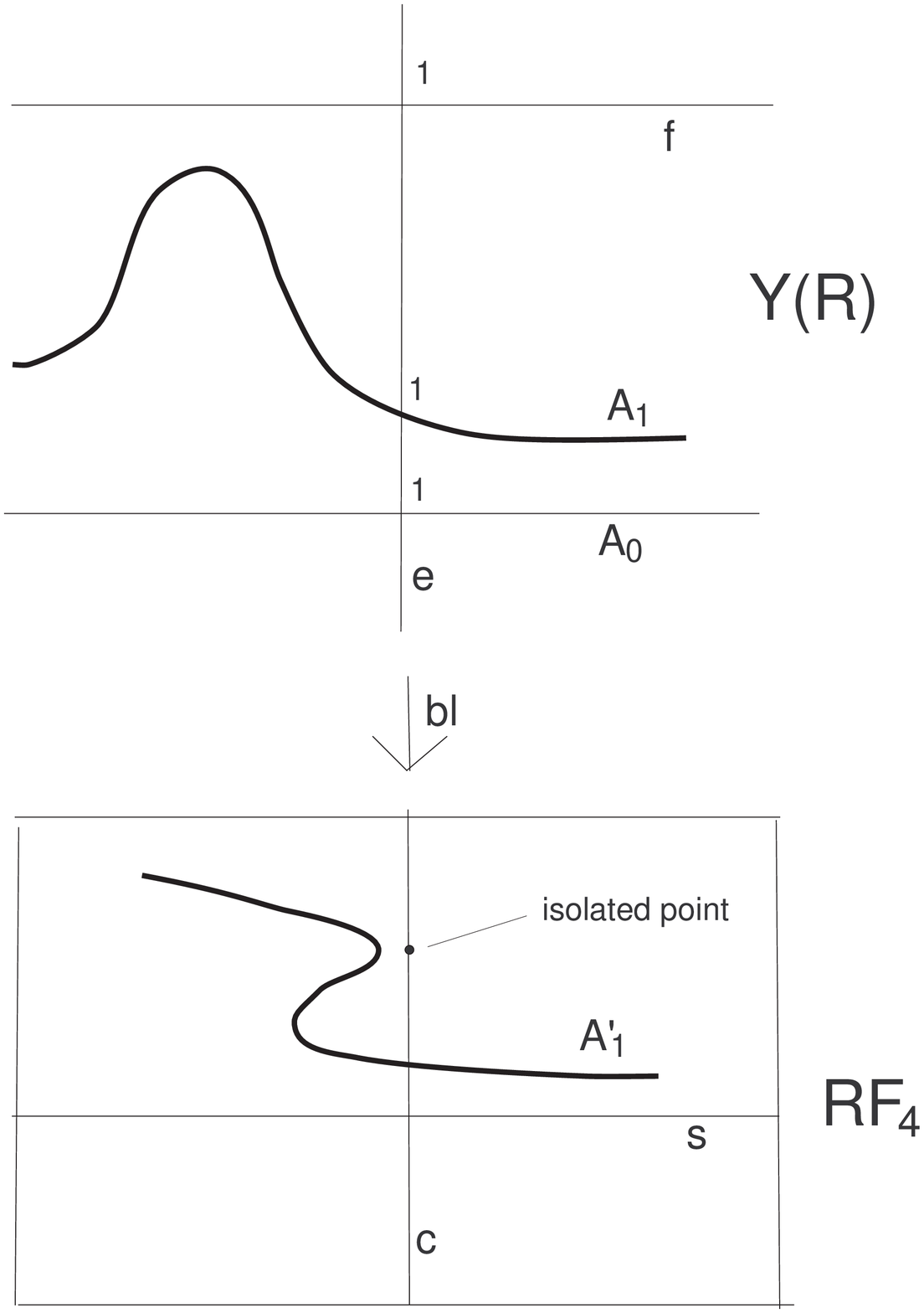}
\end{center}
\caption{The real double point $P_0$ of the curve $A^\prime_1$}
\label{a1c2}
\end{figure}

We have $A^\prime_1 \cdot s = 0$ and $A^\prime_1 \cdot c = 3$ ($A^\prime_1$ is a trigonal curve). 
Since $f \cdot A_0 = 0$ in $Y$, the section $s$ does not meet the double point $P_0$. 
We may assume that $e$ does not pass through any intersection point of $A_1$ and $f$ in $Y$. 
(See Figure \ref{a1c2}.) 
Since $A_1 \cdot e = 1$ in $Y$, 
the intersection point of $A_1$ with $e$ is {\bf real} and does not meet $f$. 
Via the map $\BL$, 
this point goes to a real intersection point of $A^\prime_1$ with $c$ with multiplicity $1$. 
Thus we set 
$$
P_1:=\BL(A_1 \cap e).\ \ \ \mbox{(the intersection point)}
$$

Since $A^\prime_1 \cdot c = 3$, 
$A^\prime_1$ intersects with $c$ at $P_0$ with multiplicity $2$ 
                       and at $P_1$ with multiplicity $1$.

\medskip

Since $A^\prime_1 \cap s = \emptyset$, 
any {\bf non-contractible (possibly real singular)} components of $\br A^\prime_1$ are ``parallel" to $\br s$. 
See Figure\ref{noncon-n-1}, \ref{noncon-n-2} and \ref{noncon}. 
But two types of non-contractible singular components in Figure\ref{noncon-n-1} are real isotopic, and 
three types of non-contractible components in Figure\ref{noncon} are real isotopic. 
\begin{figure}[!h]
\begin{center}
\includegraphics[width=3cm]{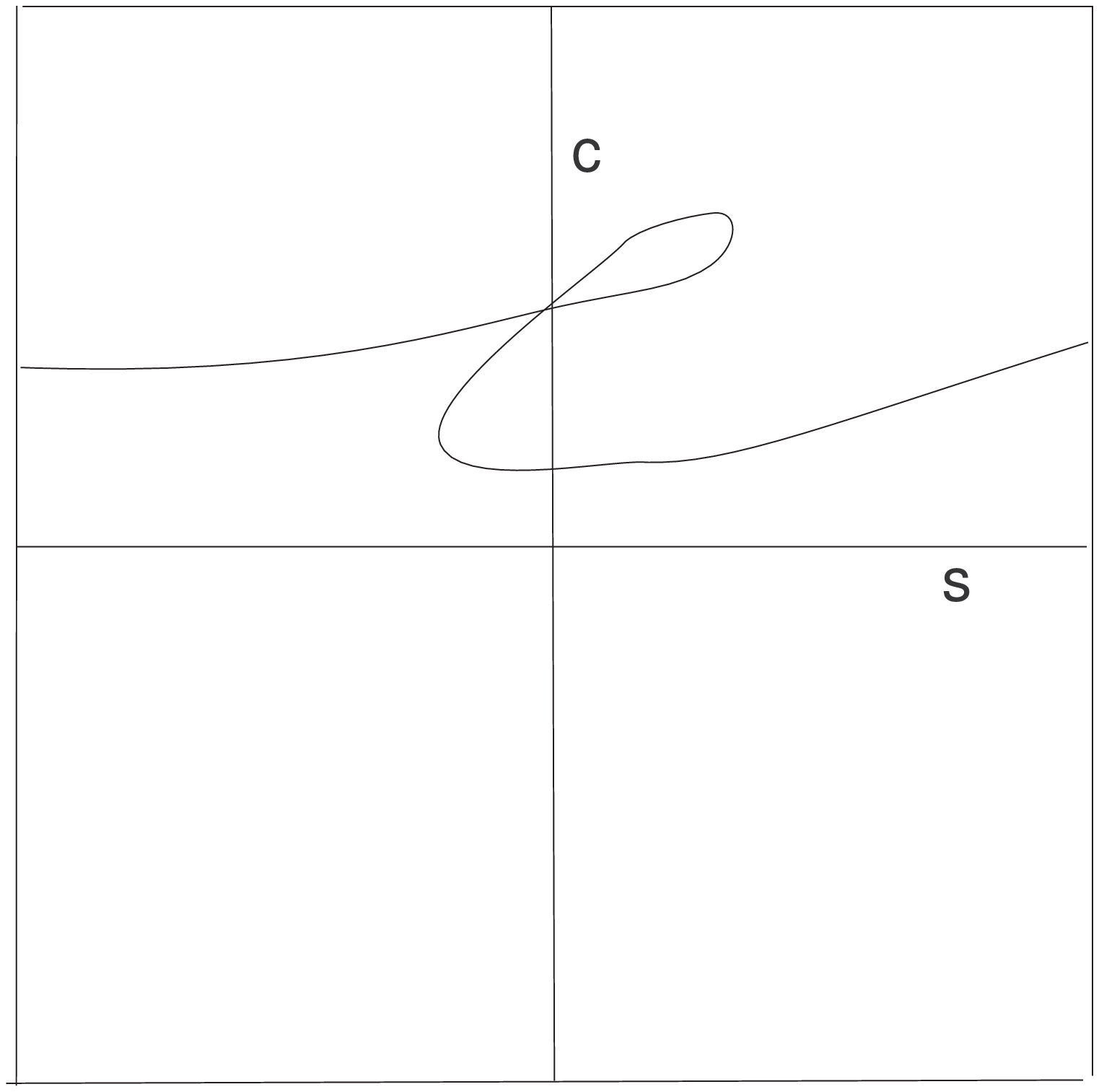}
\hspace{1.5cm}
\includegraphics[width=3cm]{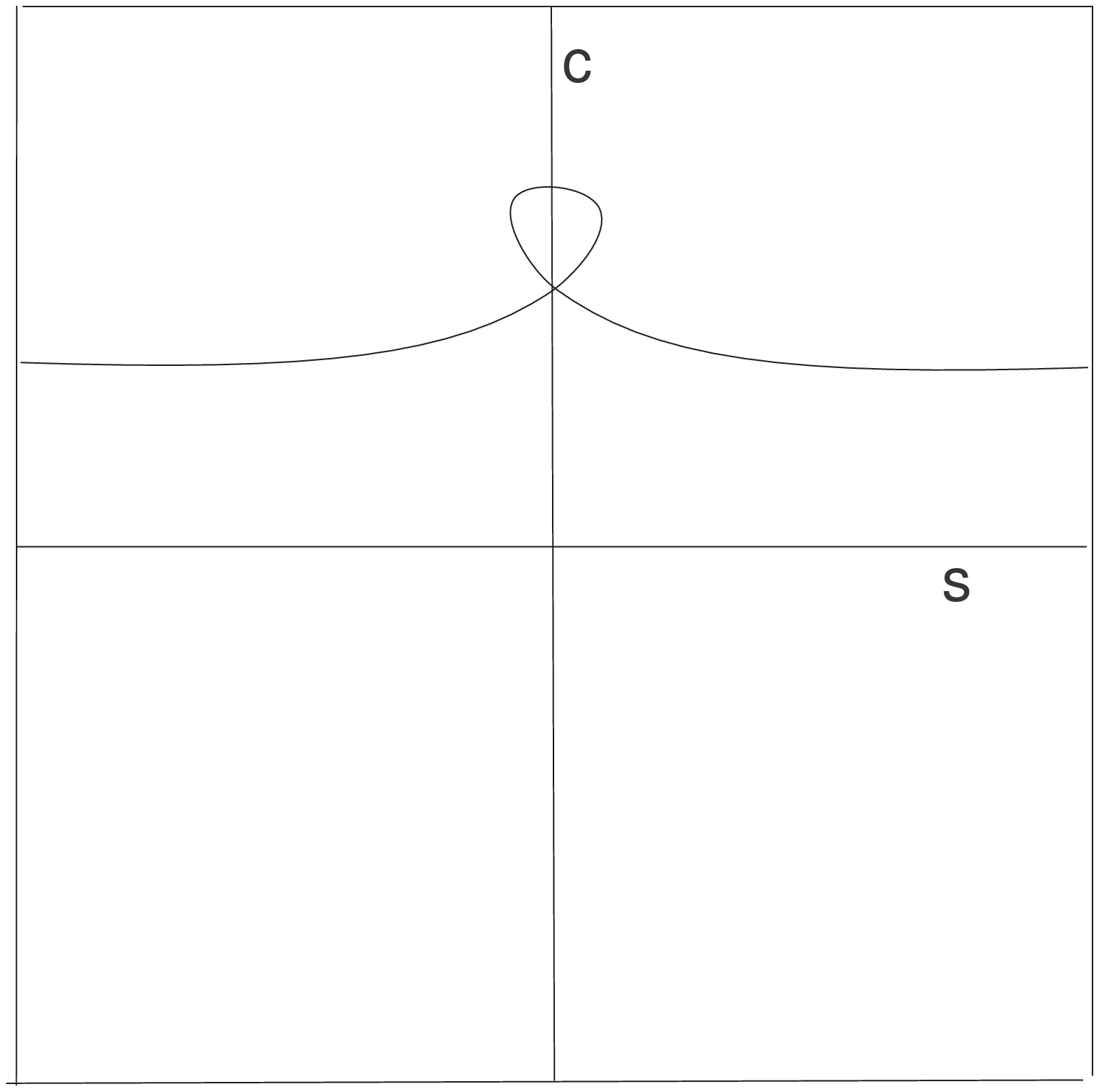}
\end{center}
\caption{A non-contractible component with a node of type {\bf Node (1)}}
\label{noncon-n-1}
\end{figure}
\begin{figure}[!h]
\begin{center}
\includegraphics[width=3cm]{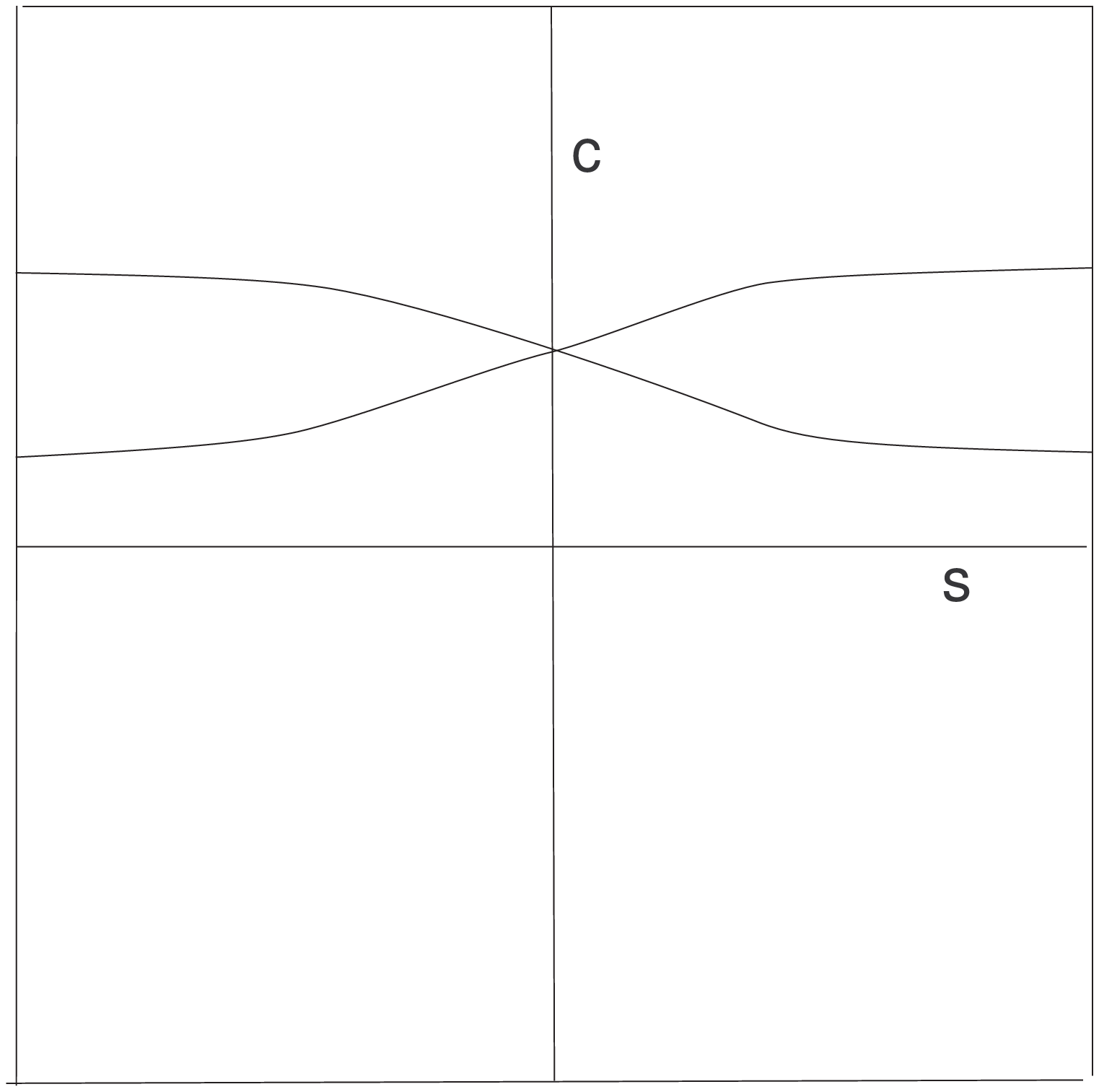}
\end{center}
\caption{A non-contractible component with a node of type {\bf Node (*)}}
\label{noncon-n-2}
\end{figure}
\begin{figure}[!h]
\begin{center}
\includegraphics[width=3cm]{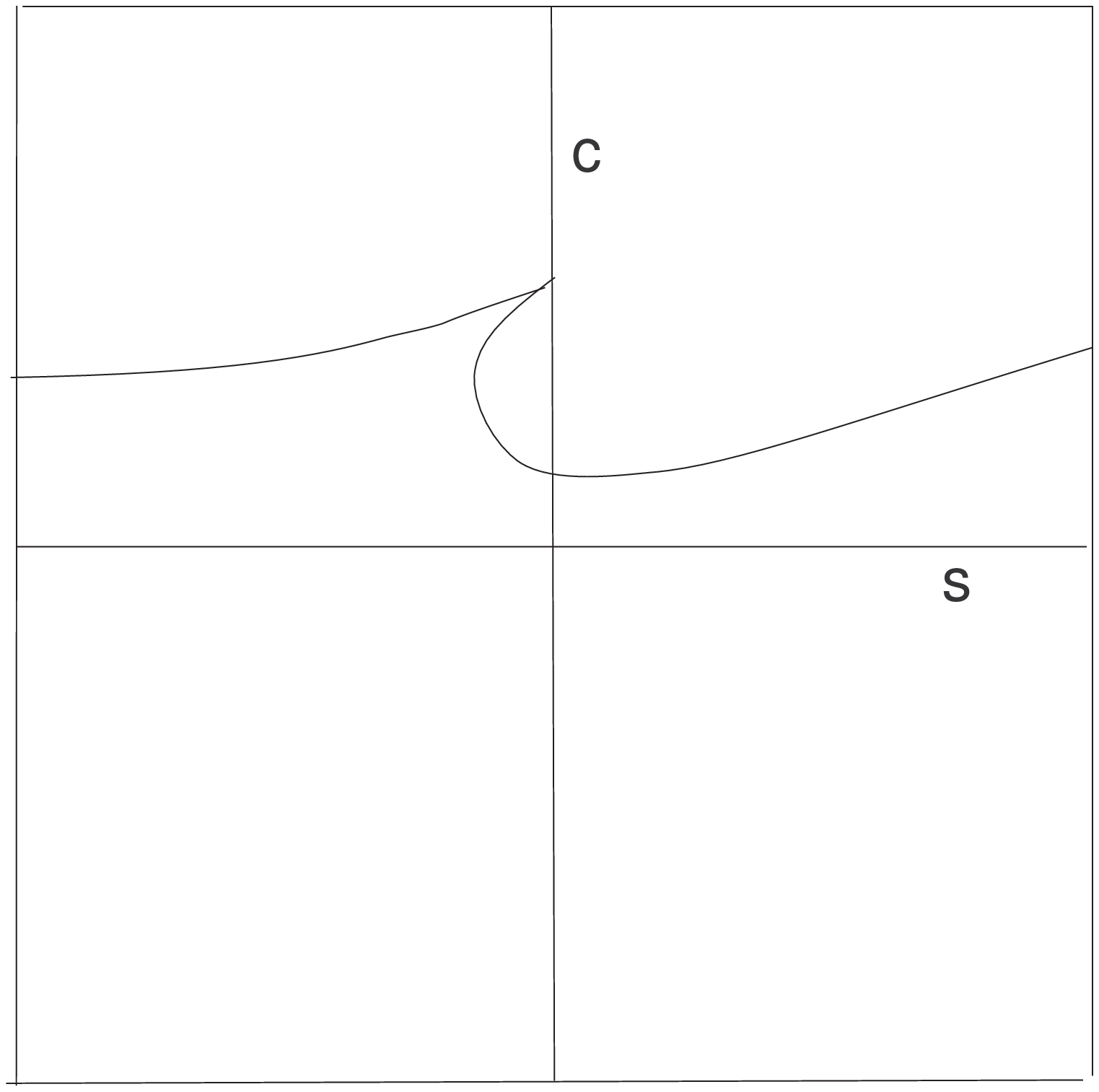}
\hspace{1cm}
\includegraphics[width=3cm]{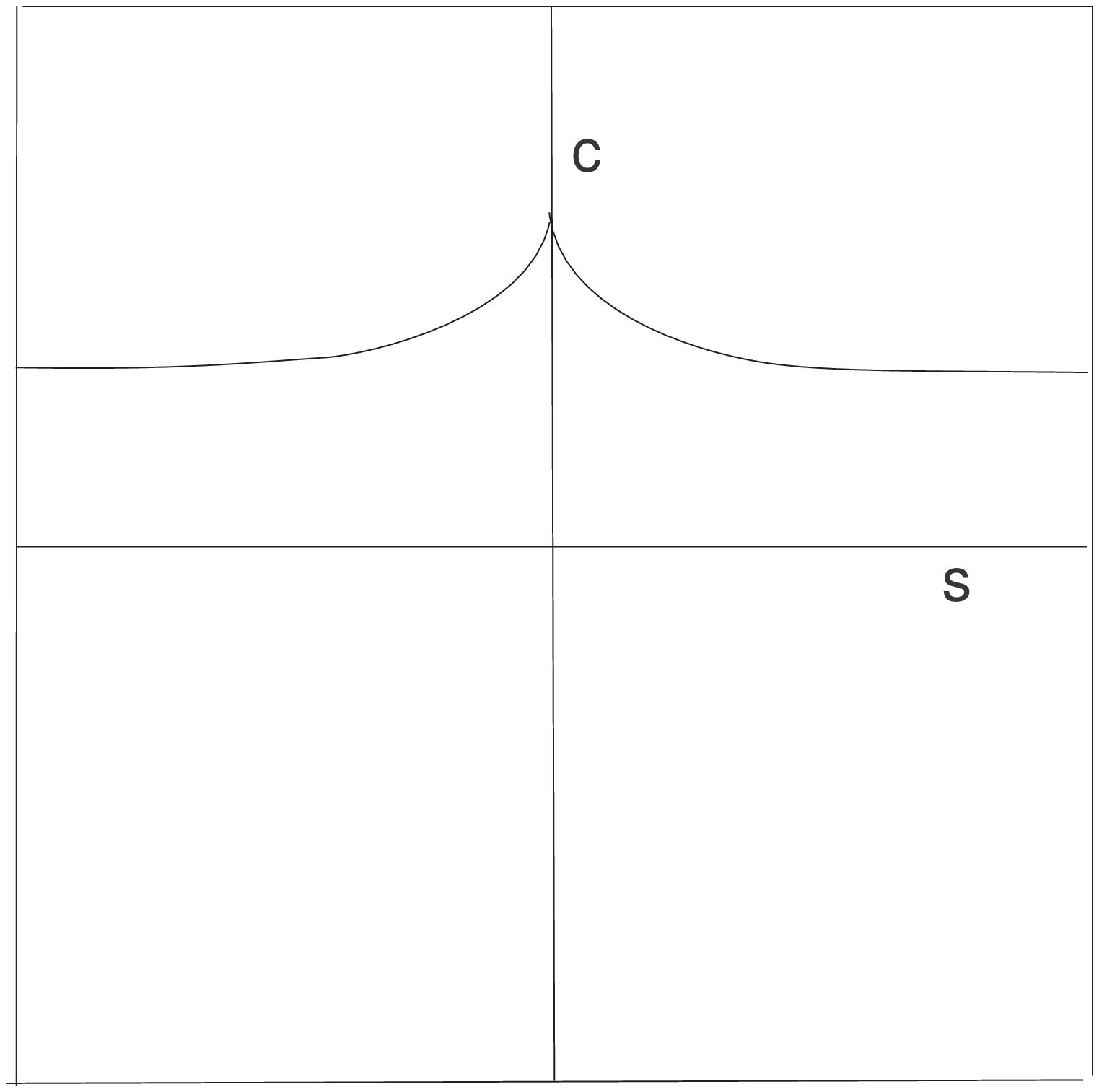}
\hspace{1cm}
\includegraphics[width=3cm]{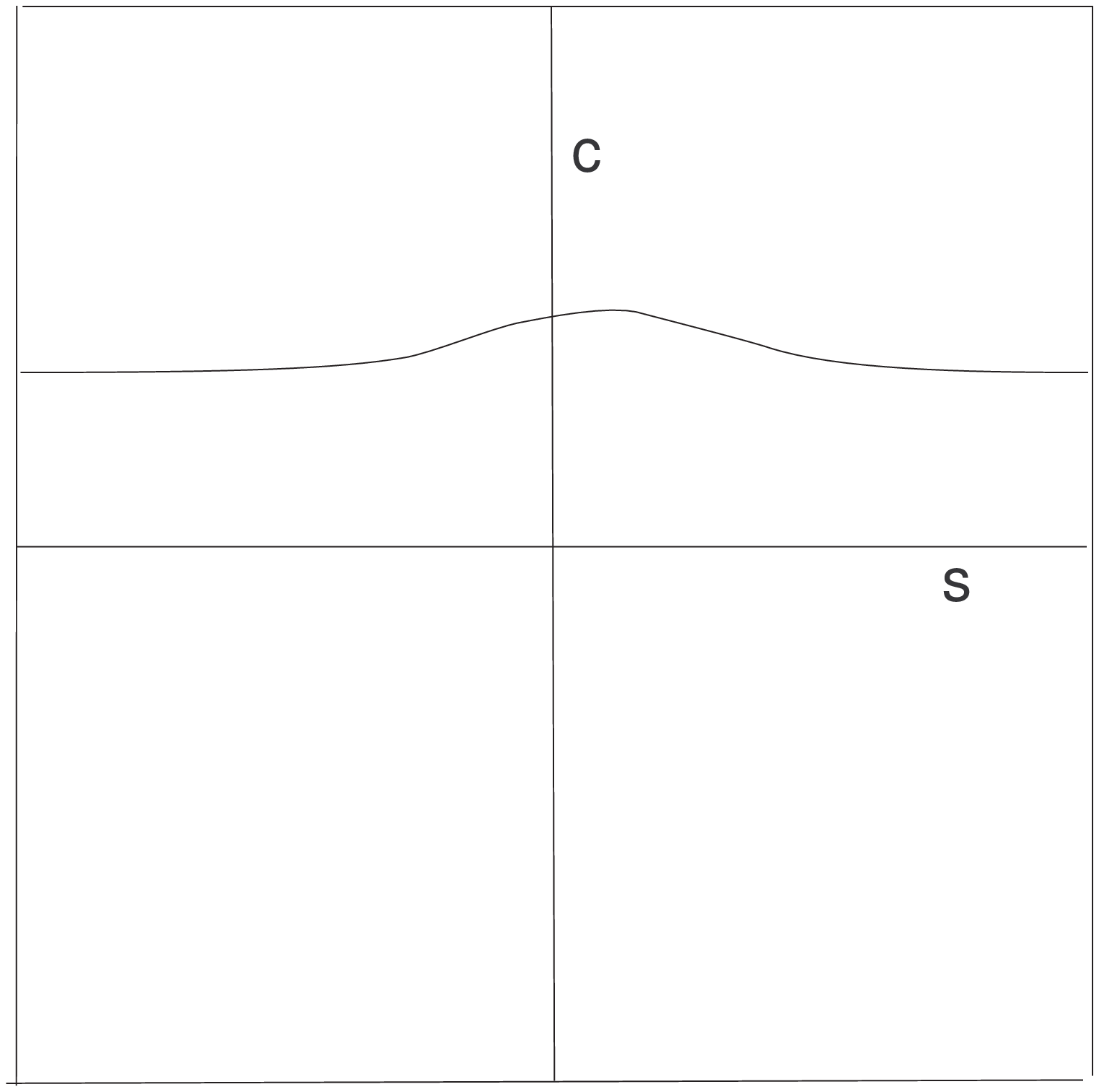}
\end{center}
\caption{A non-contractible component with a {\bf real cusp} or without singular points}
\label{noncon}
\end{figure}   


\begin{definition}
We call a connected component of $\br A^\prime_1$ an {\bf oval} if 
it has no real singular points 
and contractible in $\br \bff_4$ (a torus), namely, 
realizes $0$ in $H_1(\br \bff_4 ; \bz)$. 
\end{definition}

\begin{remark}\label{trigonal}
Since $A^\prime_1 \cdot c = 3$, 
$A^\prime_1$ is a {\bf trigonal} curve on $\bff_4$. See \cite{DIZ2011}, \cite{Orevkov2003} for related results. 
We see that 
the interior of each oval of $\br A^\prime_1$ 
does not contain any other ovals. 
Moreover, 
the ovals are canonically ordered according to their projections to the base $\br s$. 
\end{remark}

\bigskip

We now get the following possibilities. 

\noindent
{\bf Node case:}

{\bf Node (1) case:}\ \ \ 
We consider the case when {\bf both} the node $P_0$ and the intersection point $P_1$ 
are contained in {\bf the same} connected singular component of $\br A^\prime_1$. 
Then $\br A^\prime_1$ meets $\br c$ at only $P_0$ and $P_1$, 
and the singular component is {\bf not contractible}. 
See Figure \ref{noncon-n-1}. 
Note that $\br A^\prime_1$ might have some ovals. 
Since $A^\prime_1 \cdot c =3$, the interior of any oval of $\br A_1$ does not contain other ovals.
The interior of the node also does not contain ovals. 

{\bf The non-contractible singular component containing $P_0$ and $P_1$} and {\bf the section $\br s$} 
divide $\br \bff_4$ (a torus) into three parts, which are 
{\bf the interior of the node} and {\bf two non-contractible regions}. 

\begin{definition}[The regions $R_1$ and $R_2$ in Node (1) case]
Let $R_1$ denote 
{\bf the non-contractible region which is connected with the interior of the node in the blow up of $\br \bff_4$}, 
and 
let $R_2$ denote the other non-contractible region. 
We define the integers $\alpha$ and $\beta$ as follows:
\begin{equation}
\begin{array}{lcl}
\alpha & := & \# \{ \text{ovals\ contained\ in}\ R_1 \},\\
\beta  & := & \# \{ \text{ovals\ contained\ in}\ R_2 \}.
\end{array}
\label{alpha-beta}
\end{equation}
See {\bf the left} figure of Figure \ref{Case1-1}. 
\end{definition}
\begin{figure}[!h]
\begin{center}
\includegraphics[width=5cm]{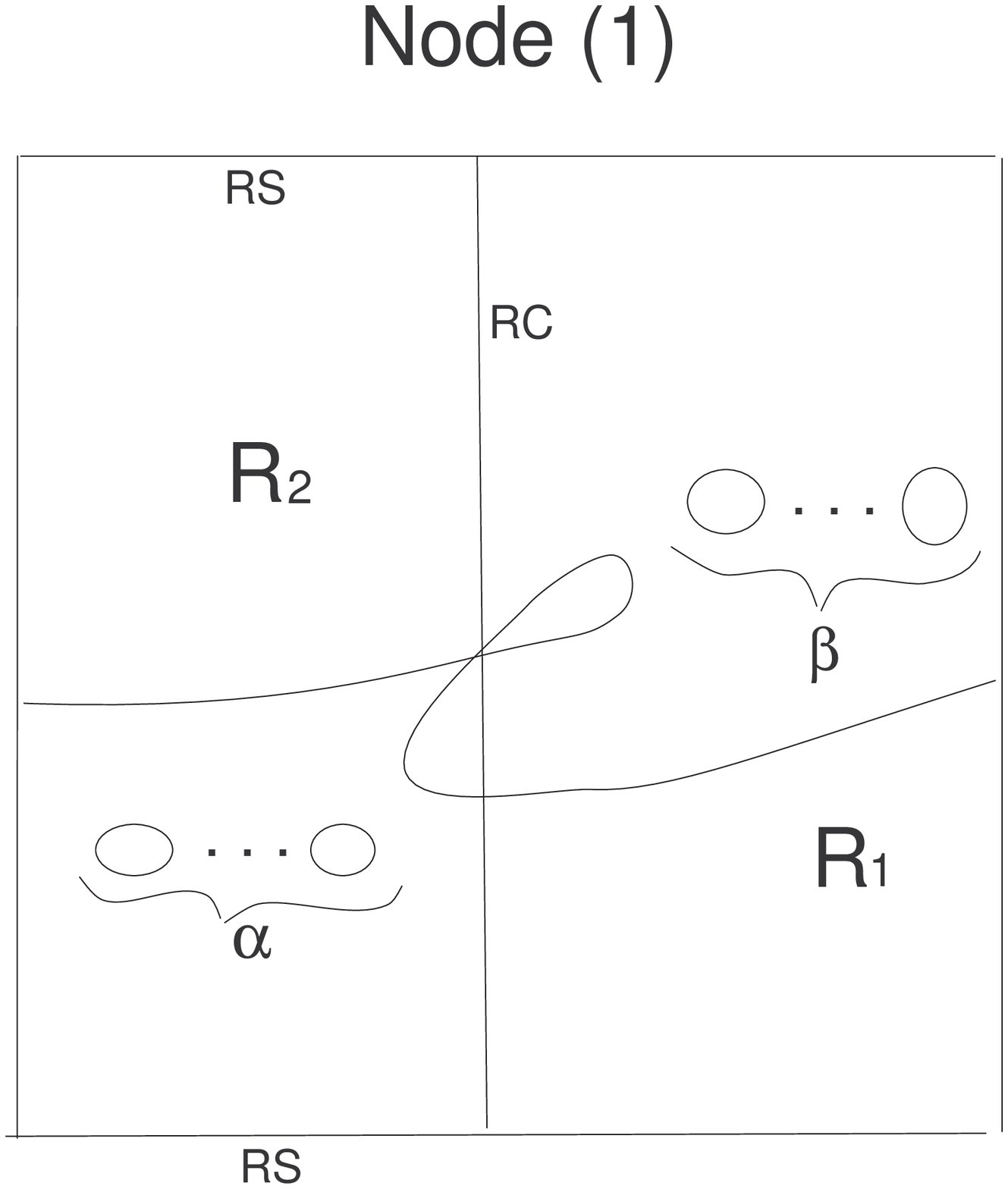}
\hspace{2cm}
\includegraphics[width=5cm]{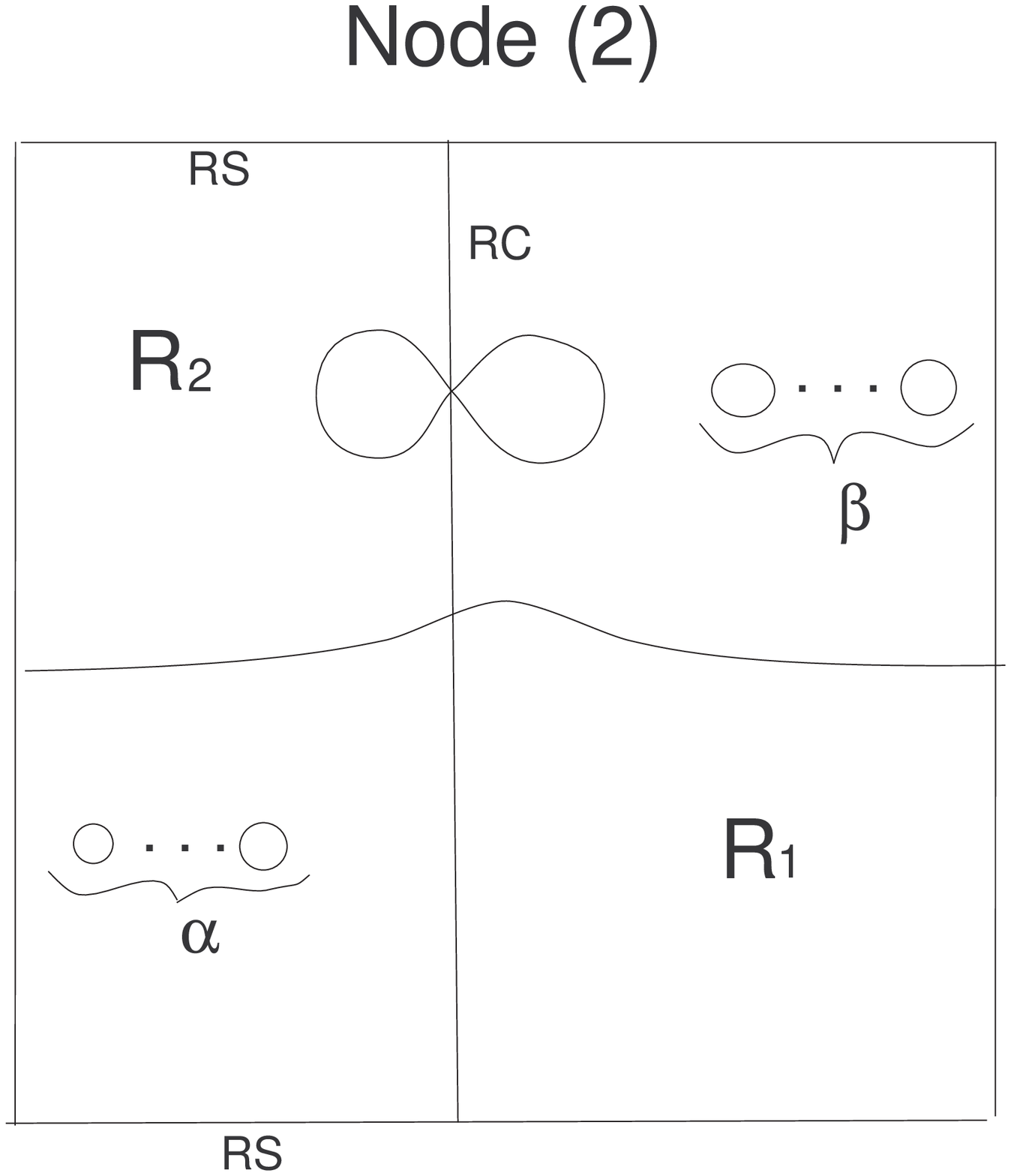}
\end{center}
\caption{The regions $R_1$ and $R_2$ in {\bf Node (1)} or {\bf Node (2)} case}
\label{Case1-1}
\end{figure}

\medskip

{\bf Node (2) case and Node (*) case:}\ \ \ 
When the node $P_0$ and the intersection point $P_1$ respectively are contained in {\bf different} connected components of $\br A^\prime_1$, 
the component containing $P_1$ is nonsingular and not contractible like the rightmost figure of Figure \ref{noncon} above. 
The component containing $P_0$ can be either 
{\bf contractible} (the left figure of Figure \ref{con})\ or \ {\bf non-contractible} (Figure \ref{noncon-n-2}). 
\begin{figure}[!h]
\begin{center}
\includegraphics[width=3cm]{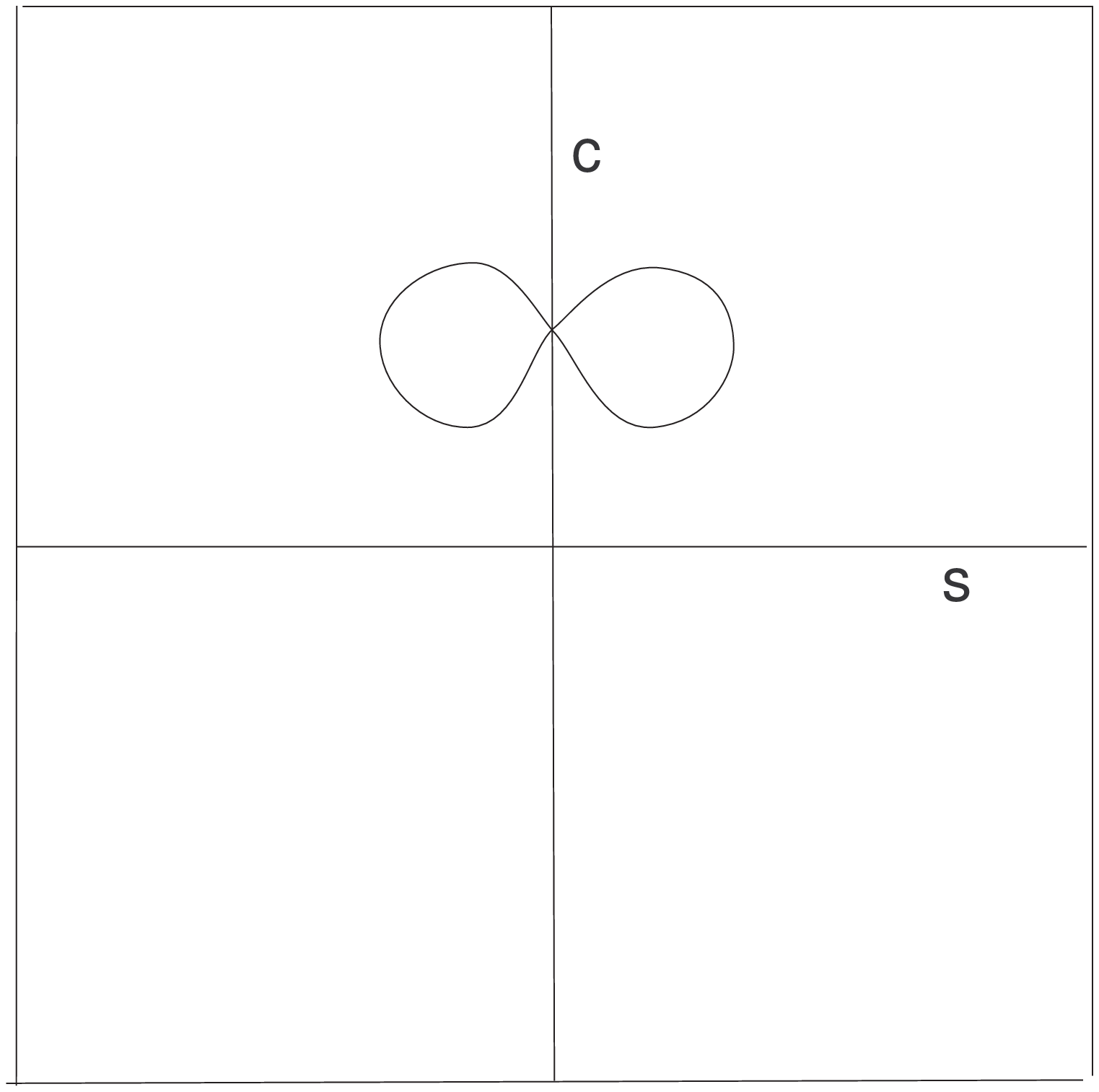}
\hspace{1cm}
\includegraphics[width=3cm]{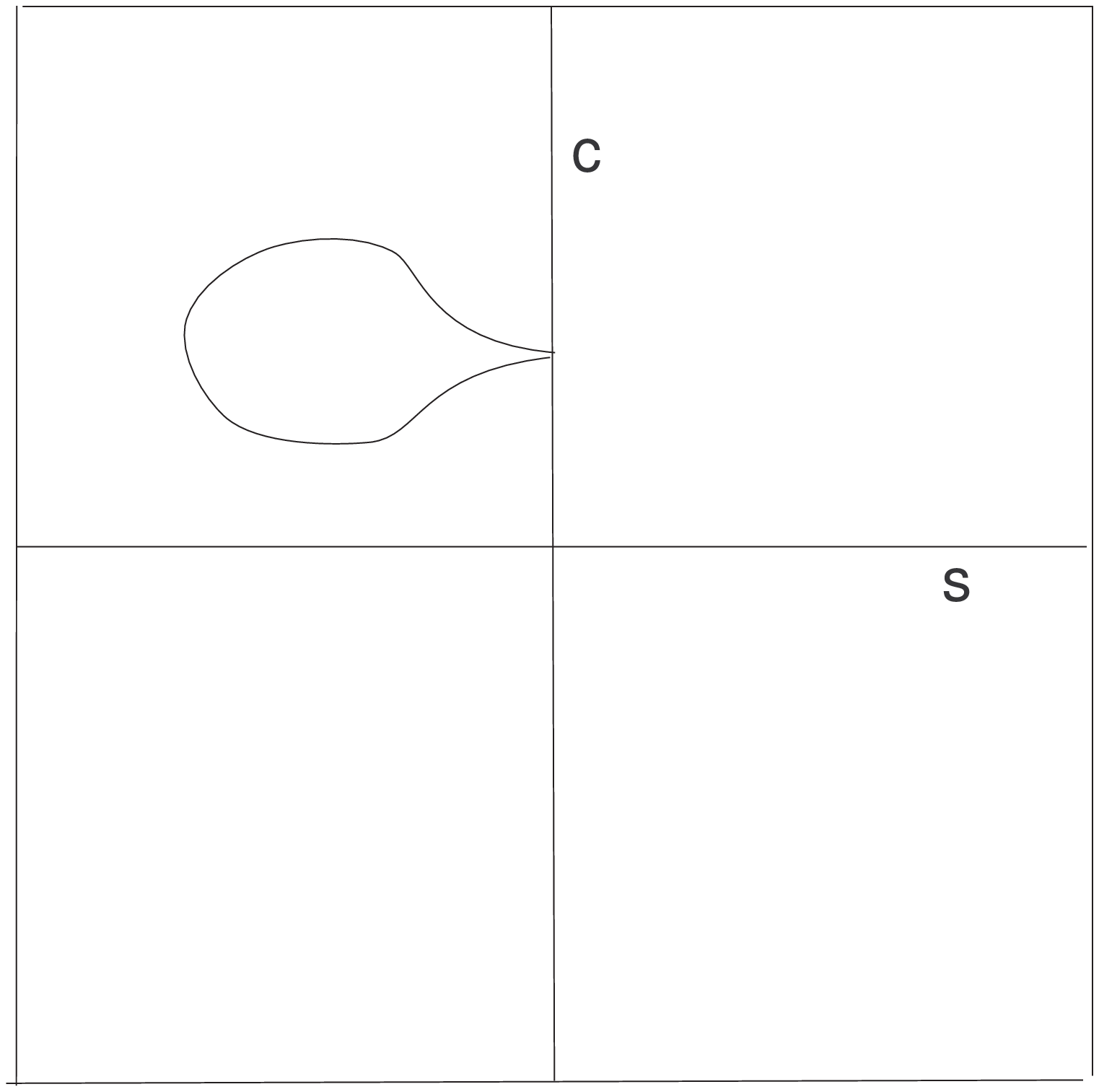}
\end{center}
\caption{Contractible components with a real node or a real cusp}
\label{con}
\end{figure}

{\bf Node (2) case:}\ 
When the component containing the node $P_0$ is {\bf contractible} (the left figure of Figure \ref{con}), 
$\br A^\prime_1$ might have some ovals. 
{\bf The component containing $P_1$} and {\bf the section $\br s$} 
divide $\br \bff_4$ into two regions. 
\begin{definition}[The regions $R_1$ and $R_2$ in Node (2) case]
Let $R_1$ denote 
{\bf the region which does not contain the contractible component containing the node $P_0$}, 
and 
let $R_2$ denote the other region. 
(Since $A^\prime_1 \cdot c =3$, 
the interior of the contractible component containing $P_0$ and 
the interior of any oval of $\br A_1$ cannot contain any other ovals.) 

We define the integers $\alpha$ and $\beta$ by (\ref{alpha-beta}). 
See {\bf the right} figure of Figure \ref{Case1-1}.
\end{definition}

\medskip

{\bf Node (*) case:}\ 
If the component containing the node $P_0$ is {\bf non-contractible} (see Figure \ref{noncon-n-2}), 
then 
$\br A^\prime_1$ has {\bf no ovals} (see Figure \ref{Case1-2}). 
\begin{figure}[!h]
\begin{center}
\includegraphics[width=3.5cm]{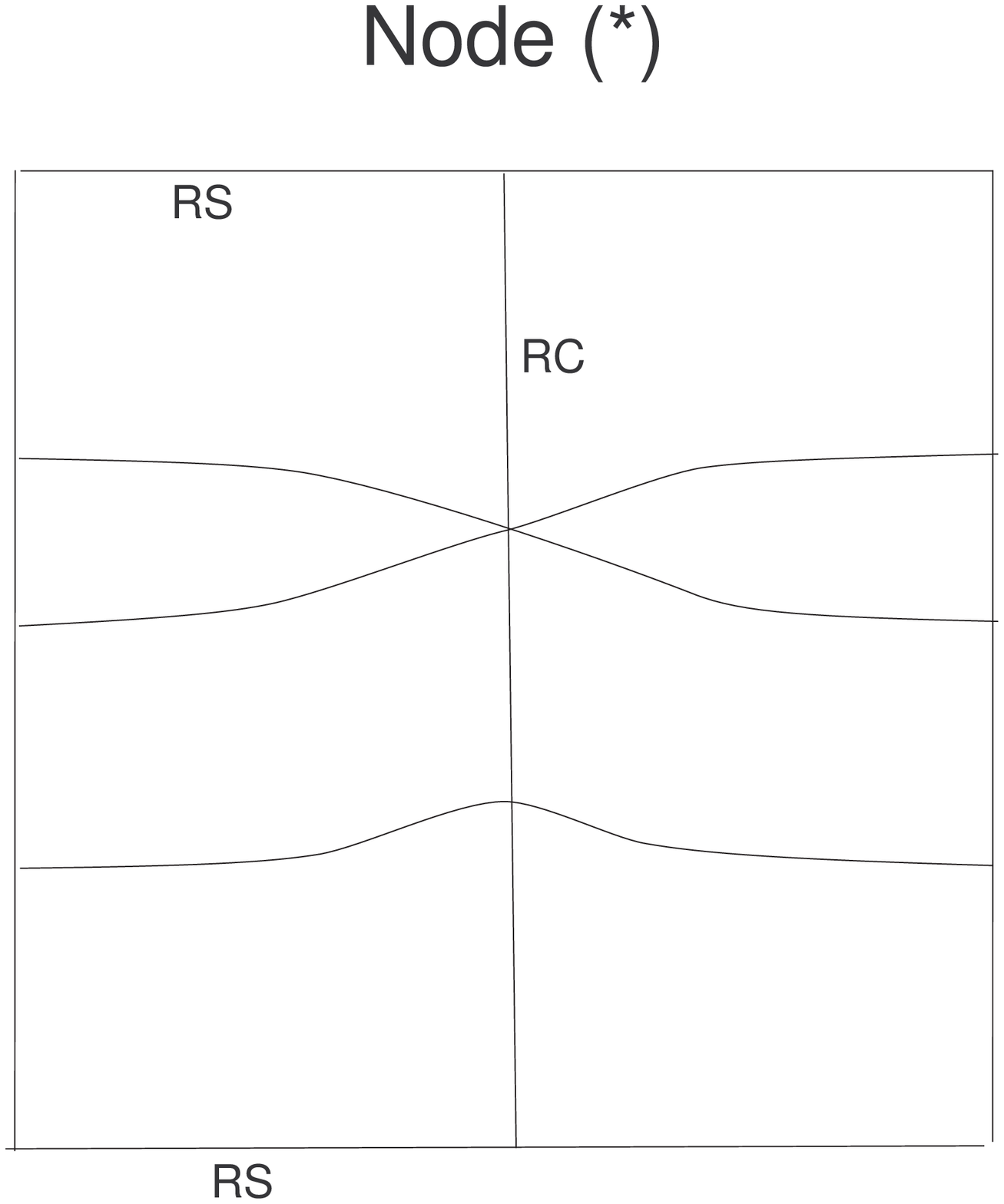}
\end{center}
\caption{{\bf Node (*)}}
\label{Case1-2}
\end{figure}


\bigskip

\noindent
{\bf Cusp case:}

{\bf Cusp (1):}\ \ \ 
When both the cusp $P_0$ and the point $P_1$ are contained in {\bf the same} connected component of $\br A^\prime_1$, 
$\br A^\prime_1$ meets $\br c$ at only $P_0$ and $P_1$, 
and the component containing $P_0$ and $P_1$ is not contractible (the leftmost figure of Figure \ref{noncon}). 
$\br A^\prime_1$ might have some ovals. 
{\bf The non-contractible component containing $P_0$ and $P_1$} and {\bf the section $\br s$} 
divide $\br \bff_4$ into two regions (the left figure of Figure \ref{Case2}). 
{\bf One of these regions goes to a non-orientable region via the blow up of $\br \bff_4$.} 

\begin{definition}[The regions $R_1$ and $R_2$ in Cusp (1) case]
Let $R_2$ denote the region which goes to a {\bf non-orientable region} via the blow up of $\br \bff_4$.
and let $R_1$ denote the other region. 
(Since $A^\prime_1 \cdot c =3$, the interior of any oval of $\br A_1$ does not contain any other ovals.)

We define the integers $\alpha$ and $\beta$ by (\ref{alpha-beta}). 
See the left figure of Figure \ref{Case2}.
\end{definition}

\begin{figure}[!h]
\begin{center}
\includegraphics[width=4.5cm]{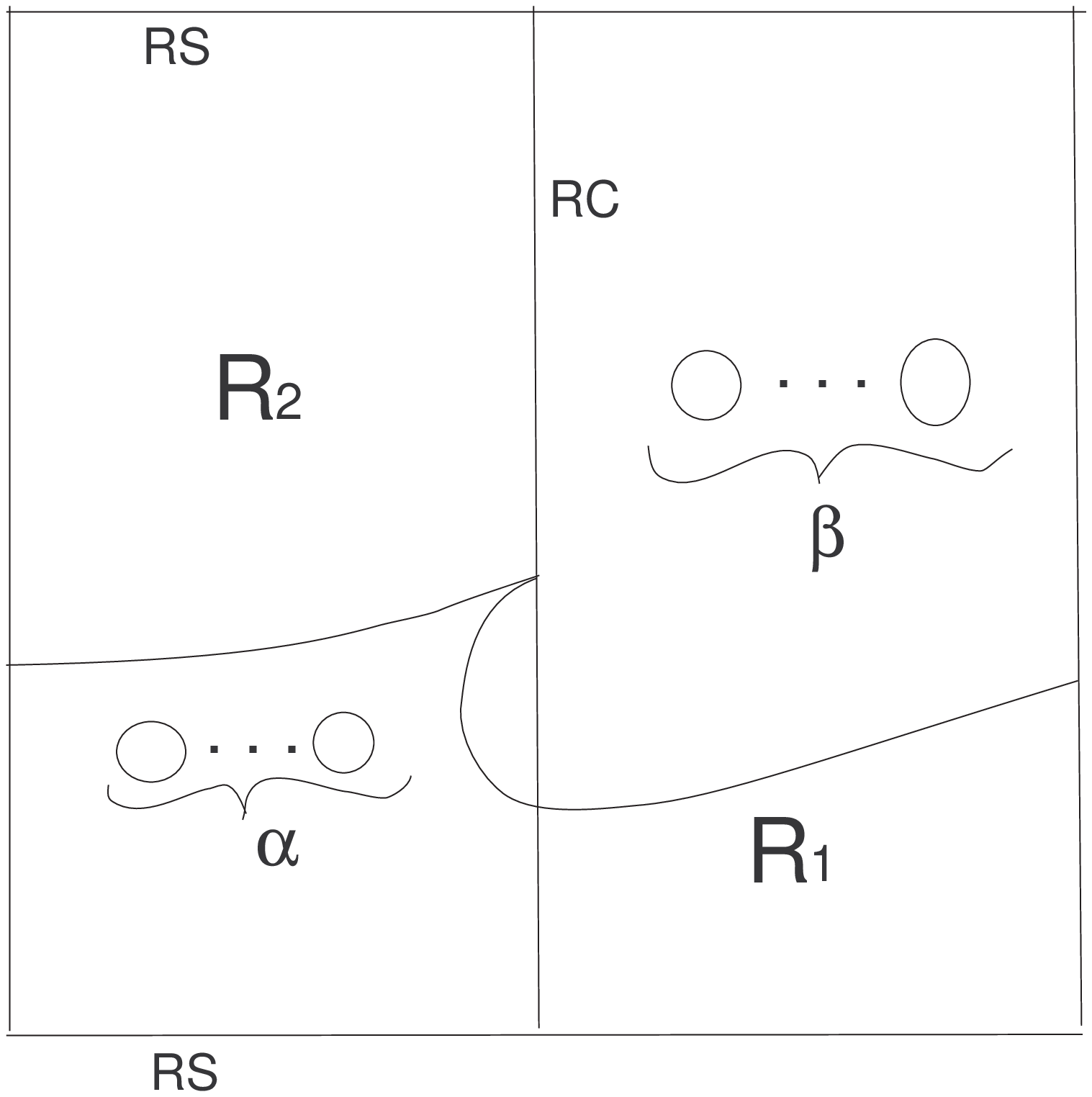}
\hspace{1.5cm}
\includegraphics[width=4.5cm]{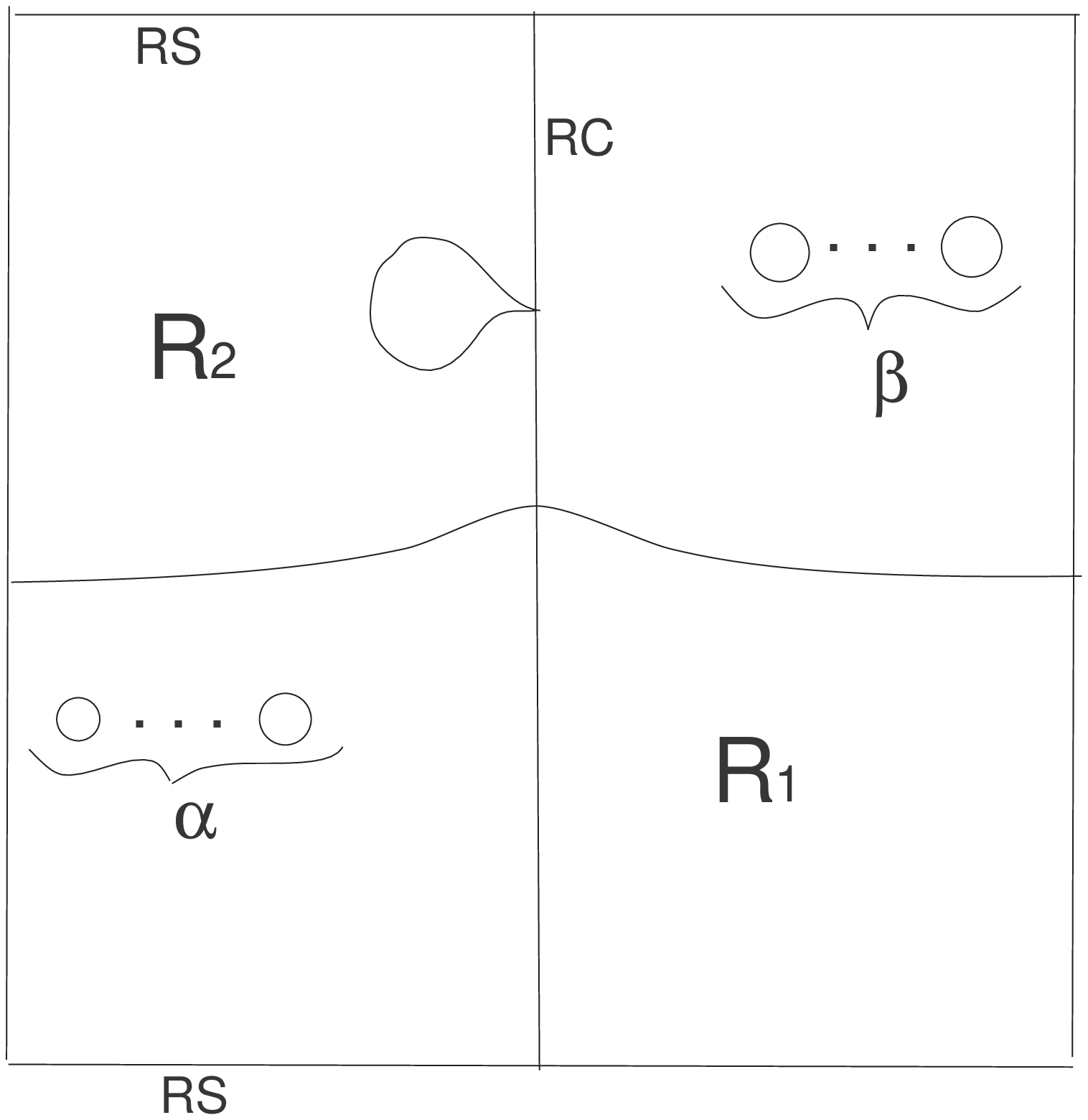}
\end{center}
\caption{The regions $R_1$ and $R_2$ in {\bf Cusp (1)} or {\bf Cusp (2)} case}
\label{Case2}
\end{figure} 

\medskip

{\bf Cusp (2)}\ \ \ 
When the cusp $P_0$ and $P_1$ respectively are contained in {\bf different} connected components of $\br A^\prime_1$, 
the component containing $P_1$ is {\bf not contractible} (the rightmost figure of Figure \ref{noncon}). 
The component containing the cusp $P_0$ should be {\bf contractible} (the right figure of Figure \ref{con}). 
(If not, then this component would be like the middle figure of Figure \ref{noncon} 
and the number of the intersection points of $\br A^\prime_1$ with $\br c$ would be even. 
This contradicts with $A^\prime_1 \cdot c = 3$.) 
$\br A^\prime_1$ might have some ovals. 
{\bf The component containing $P_1$} and {\bf the section $\br s$} 
divide $\br \bff_4$ into two regions. 
\begin{definition}[The regions $R_1$ and $R_2$ in Cusp (2) case]
Let $R_1$ denote the region which does not contain the contractible component which contains the cusp $P_0$, 
and let $R_2$ denote the other region. (the right figure of Figure \ref{Case2}) 
(Since $A^\prime_1 \cdot c =3$, the interior of any oval of $\br A_1$ does not contain any ovals.)

We define the integers $\alpha$ and $\beta$ by (\ref{alpha-beta}). 
See the right figure of Figure \ref{Case2} above.
\end{definition}

\bigskip

\noindent
{\bf Isolated point case:}

In this case the connected component containing $P_1$ is nonsingular and non-contractible like the right figure of Figure \ref{noncon}. 
$\br A^\prime_1$ might have some ovals. 
{\bf The component containing $P_1$} and {\bf the section $\br s$} 
divide $\br \bff_4$ into two regions. 
\begin{definition}[The regions $R_1$ and $R_2$ in Isolated point case]
Let $R_1$ denote the region which {\bf does not contain the isolated point}, 
and let $R_2$ denote the other region. 
(Since $A^\prime_1 \cdot c =3$, the interior of any oval of $\br A_1$ does not contain any other ovals.)

We define the integers $\alpha$ and $\beta$ by (\ref{alpha-beta}). 
See Figure \ref{Case3}. 
\end{definition}

\begin{figure}[!h]
\begin{center}
\includegraphics[width=4cm]{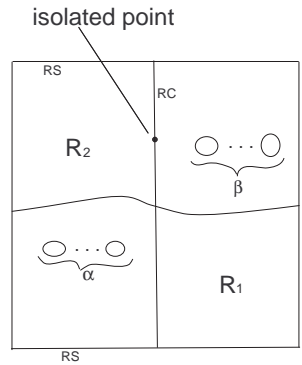}
\end{center}
\caption{The regions $R_1$ and $R_2$ in {\bf Isolated point} case}
\label{Case3}
\end{figure}

\clearpage

From the above argument, we have:
\begin{proposition}\label{allcases_F_4}
The real isotopy type of the singular connected component, 
which has one double point, 
of the curve $\br A^\prime_1$ on $\br \bff_4$ 
is one of the following $6$ types: 
\begin{itemize}
\item {\rm Node (1)}, {\rm Node (2)}, {\rm Node (*)},
\item {\rm Cusp (1)}, {\rm Cusp (2)}, or
\item {\rm Isolated point}.
\end{itemize}
See {\sc Table \ref{allcases_F_4_table}}. $\Box$

\begin{table}[!h]
\begin{center}
\begin{tabular}{|c|l|l|l|}
\hline
{\bf I}   & {\rm Node (1)} \includegraphics[width=2.5cm]{node1-dege.eps} & {\rm Cusp (1)} \includegraphics[width=2.5cm]{Case2-1.eps} & {\rm Isolated point} \includegraphics[width=2.5cm]{isolated1.eps} \\ \hline
\multicolumn{4}{c}{}    \\
\cline{1-3}
{\bf II}  & {\rm Node (2)} \includegraphics[width=2.5cm]{node2-dege.eps} & {\rm Cusp (2)} \includegraphics[width=2.5cm]{Case2-2.eps} & \multicolumn{1}{c}{} \\
\cline{1-3}
\multicolumn{4}{c}{}    \\
\cline{1-2}
{\bf III} & {\rm Node (*)} \includegraphics[width=2.5cm]{node-ast-dege.eps} & \multicolumn{2}{c}{} \\
\cline{1-2}
\end{tabular}
\end{center}
\caption{Real isotopy types of the singular component of the curve $\br A^\prime_1$.}
\label{allcases_F_4_table}
\end{table}
\end{proposition}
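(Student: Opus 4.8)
The plan is to reduce the statement to a finite case analysis driven by two facts about $A^\prime_1$ established above: it is a trigonal curve on $\bff_4$ (since $A^\prime_1\cdot c=3$) and it is disjoint from the exceptional section $s$ (since $A^\prime_1\cdot s=0$). Disjointness from $\br s$ forces every non-contractible component of $\br A^\prime_1$ to be isotopic to $\br s$, hence to realize the same nonzero class in $H_1(\br\bff_4;\bz)$; the trigonality $A^\prime_1\cdot c=3$ then forbids any oval from being nested inside another oval, inside a contractible singular component, or inside the disc cut off by a node (cf. Remark~\ref{trigonal}). These two principles let one read off, in each case, precisely which regions the distinguished component(s) together with $\br s$ cut $\br\bff_4$ into.

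First I would pin down the local type of the unique real double point $P_0=\BL(f)$. Pulling back to $Y$, the curve $A_1$ meets the exceptional curve $f$ with $A_1\cdot f=2$, so one of three things happens: $A_1$ meets $f$ transversally at two distinct real points (so $P_0$ is a real node), at a pair of complex-conjugate points (so $P_0$ is a real isolated point), or tangentially at one real point (so $P_0$ is a real cusp, the only degenerate case). This is the top-level trichotomy Node / Cusp / Isolated; it uses only facts (1)--(2) of Subsection~\ref{RealK3-311} together with the description of $\BL$.

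Next, within the Node and Cusp cases I would split according to whether $P_0$ and $P_1$ — which together exhaust $A^\prime_1\cap c$ with multiplicities $2$ and $1$ — lie on the same connected component of $\br A^\prime_1$ or on different ones. If they lie on the same component, that component meets the fibre $\br c$ with the full odd index, hence is non-contractible; this gives Node~(1) and Cusp~(1), and the component together with $\br s$ divides $\br\bff_4$ into two regions (plus, in the node case, the disc bounded by the node). If they lie on different components, the component through $P_1$ is smooth and meets $\br c$ once, hence is non-contractible, while the component through $P_0$ meets $\br c$ with total multiplicity $2$; a parity argument on the number of real intersection points with $\br c$ shows that in the cusp case this component \emph{must} be contractible (the single real branch through a cusp would otherwise force an odd count), giving Cusp~(2), whereas in the node case it may be either contractible (Node~(2)) or non-contractible (Node~(*)). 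The Isolated point case needs no further split: an isolated real point meets no real curve, so $P_1$ sits on a smooth non-contractible component and one gets the single type Isolated point.

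The step demanding the most care is the topological bookkeeping that turns each combinatorial possibility into the normal forms pictured in Figures~\ref{noncon-n-1}--\ref{Case3}: verifying that the complement of the distinguished component(s) and $\br s$ has exactly the claimed connected pieces, that Node~(*) admits no ovals at all, and that in Cusp~(1) exactly one of the two regions becomes non-orientable after the blow-up of $\br\bff_4$ at $P_0$ (so that the labels $R_1,R_2$ are well defined). Granting these local-to-global normal-form statements — each a short argument with the trigonal pencil $|c|$ — the six types are exactly the entries of {\sc Table~\ref{allcases_F_4_table}}, organised into the rows {\bf I}, {\bf II}, {\bf III} according as the singular component is non-contractible containing both $P_0$ and $P_1$, is contractible, or is non-contractible containing only $P_0$; this proves the proposition. $\Box$
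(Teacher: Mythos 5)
Your proposal is correct and takes essentially the same route as the paper, whose ``proof'' is exactly the preceding case analysis: the trichotomy from $A_1\cdot f=2$ (two real points / conjugate points / tangency), the split according to whether $P_0$ and $P_1$ lie on the same component of $\br A^\prime_1$, and the parity argument with $A^\prime_1\cdot c=3$ that forces the cusp component in Cusp~(2) to be contractible while allowing both options in the node case. The only quibble is your parenthetical in the Cusp~(2) step --- the point is that a non-contractible component must cross $\br c$ an odd number of times whereas the real cusp branch contributes an even local count --- but this is the same argument the paper gives.
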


\medskip

\subsection{Topology of the real parts of K3 surfaces $X$ viewed via the blow up $Y \to \bff_4$}

We determine the topology of the real parts $X_\varphi(\br)$ and $X_{\wvarphi}(\br)$ of 
a real $2$-elementary K3 surface $(X,\tau,\varphi)$ of type $(S,\theta) \cong ((3,1,1),- \id)$, 
and the real part $Y(\br)$ of the quotient surface $Y$ with the real structure $\varphi_{\mathrm{mod}\ \tau}$. 
Recall Proposition \ref{allcases_F_4}. 

\medskip

\noindent
{\bf I.}\ {\bf Node (1)},\ {\bf Cusp (1)}\ and \ {\bf Isolated point} cases.

In these cases, by the definitions of the regions $A_\pm$ and $R_1,\ R_2$, 
we see that: 
\begin{itemize}
\item $A_+$ is homeomorphic to the disjoint union of (an annulus with $\alpha$ holes) and ($\beta$ disks), and
\item 
$A_-$ is homeomorphic to 
the disjoint union of (((an annulus $\setminus D^2)\cup$\ {\bf M\"obius band}) with $\beta$ holes) and ($\alpha$ disks). 
\end{itemize}

\noindent
For example, see Figure \ref{Case1-1_A+A-} for {\bf Node (1)} case.

\begin{figure}[!h]
\begin{center}
\includegraphics[width=6.5cm]{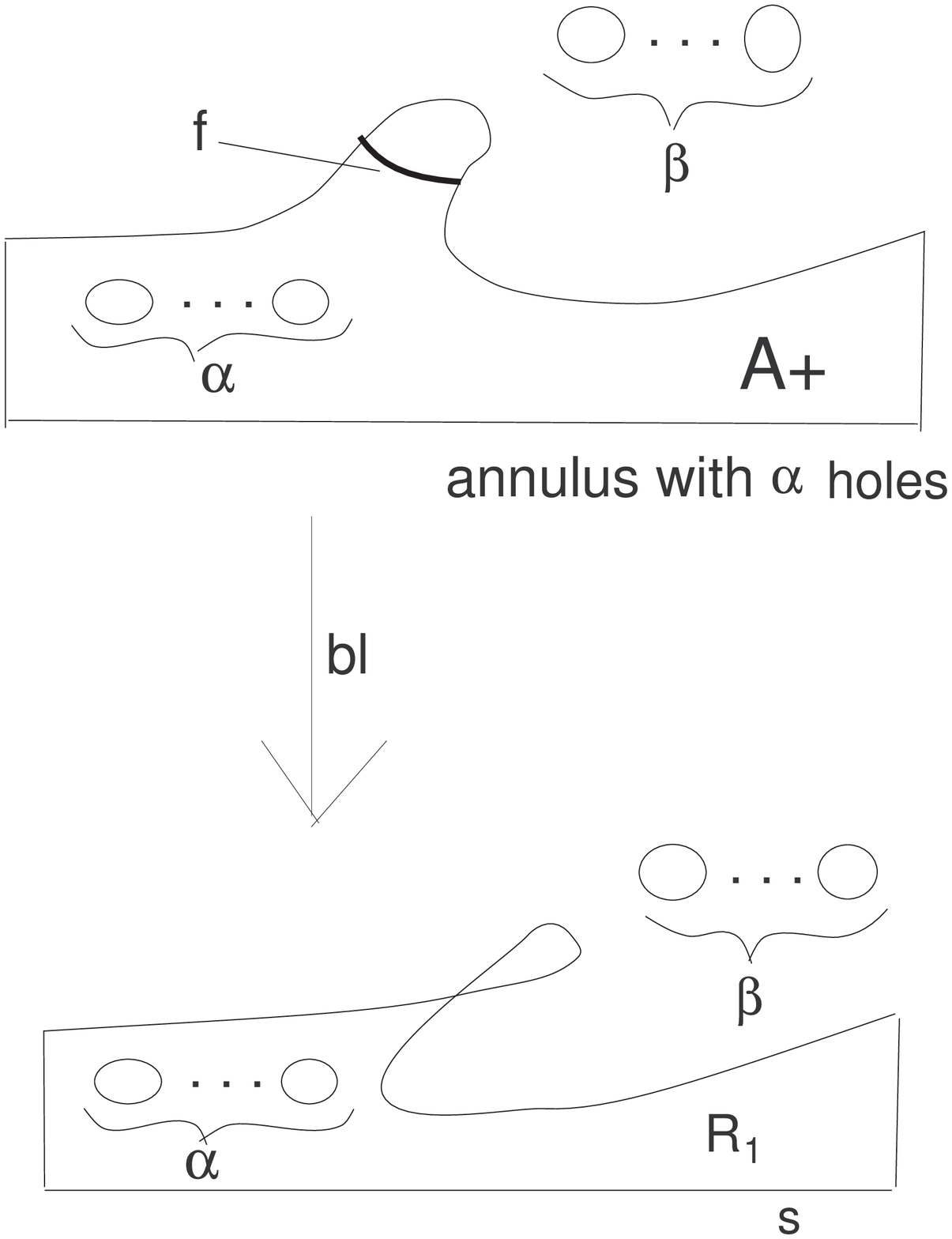}
\hspace{2cm}
\includegraphics[width=6.5cm]{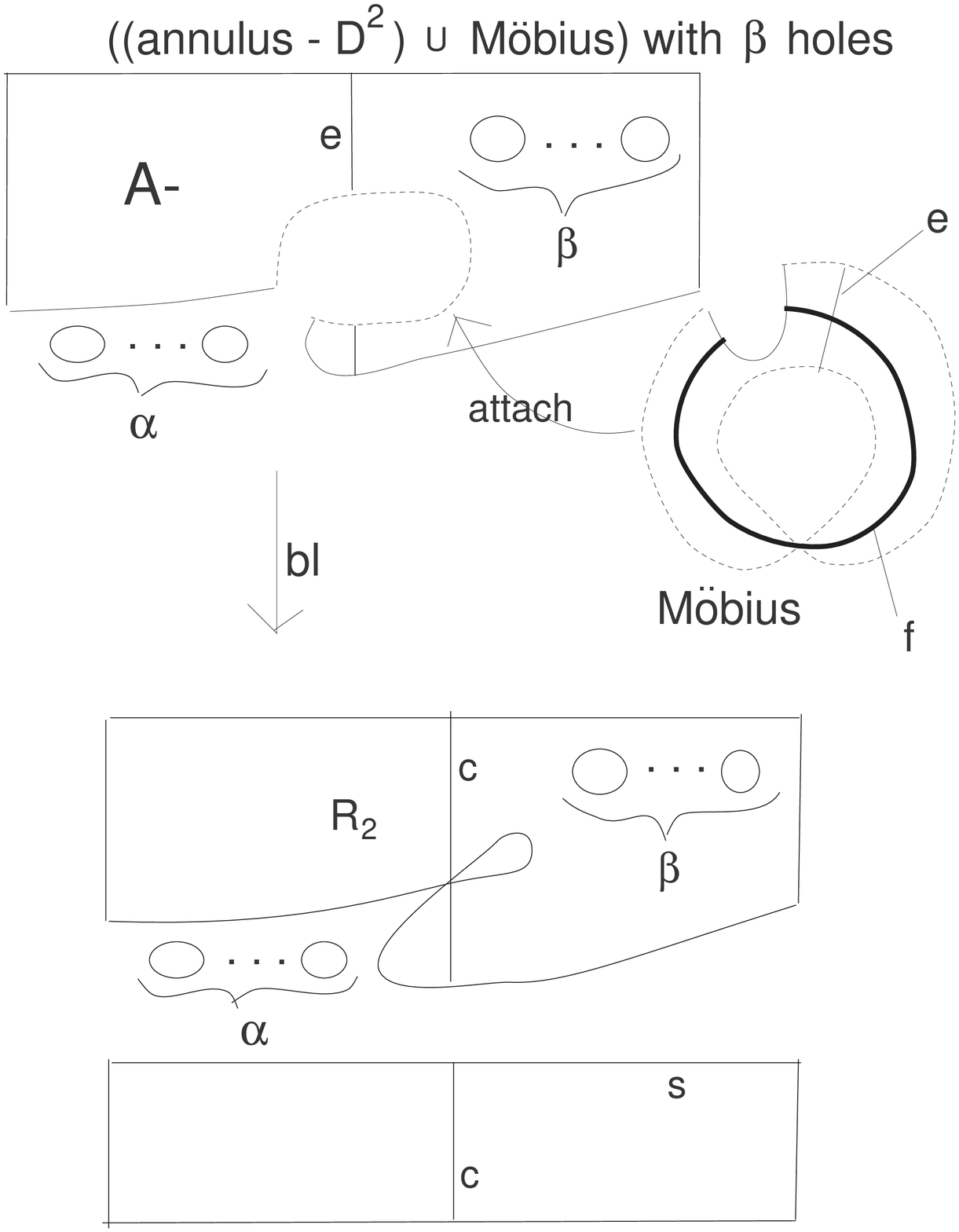}
\end{center}
\caption{The regions $A_+$ and $A_-$ in {\bf Node (1)} case}
\label{Case1-1_A+A-}
\end{figure}   

\medskip

Suppose that the invariant $H(\psi) = 0$ for the involution $\varphi$. 

If $F$ is {\bf irreducible}, 
then we have 
$[X_\varphi(\br) \cap F] \neq 0$ and $\pi(X_\varphi(\br))=A_-$. 
On the other hand, for $\wvarphi$, we have 
$[X_{\wvarphi}(\br) \cap F] = 0$ and $\pi(X_{\wvarphi}(\br))=A_+$. 

We can say that 
$$
\begin{array}{lcl}
\alpha & = & \# \{ \text{ovals\ whose\ {\bf interiors}\ are\ contained\ in}\ \BL(A_-) \},\ \text{and}\\
\beta  & = & \# \{ \text{ovals\ whose\ {\bf interiors}\ are\ contained\ in}\ \BL(A_+) \}.
\end{array}
$$

Thus we have
$$
X_\varphi(\br)    \sim    \Sigma_{2+ \beta} \cup \alpha S^2.
$$

Moreover, we have 
$(r(\psi),a(\psi),\delta_{\psi S}) \neq (10,10,0),\ (10,8,0)$,
$$
r(\psi)=9+\alpha -\beta,\ \ \ a(\psi)=9-\alpha -\beta .
$$

On the other hand, we have 
$$
X_{\wvarphi}(\br)    \sim    \Sigma_{1+ \alpha} \cup \beta S^2.
$$
Hence, we have $H(\sigma \circ \psi) \cong \bz/2\bz$    
and 
$$
r(\sigma \circ \psi)=10-\alpha +\beta,\ \ \ a(\sigma \circ \psi)=10-\alpha -\beta .$$

\medskip

We omit the cusp and isolated point cases. 

\medskip

\noindent
{\bf II.}\ {\bf Node (2)} and {\bf Cusp (2)} cases.

In these cases, by the definitions of the regions $A_\pm$ and $R_1,\ R_2$, we see that

\begin{itemize}
\item 
$A_+$ is homeomorphic to the disjoint union of (an annulus with $\alpha$ holes) and ($(\beta +1)$ disks), and 
\item 
$A_-$ is homeomorphic to 
the disjoint union of 
(((an annulus $\setminus D^2) \cup$\ {\bf M\"obius band}) with $(\beta +1)$ holes) and ($\alpha$ disks). 
\end{itemize}
For example, See Figure \ref{Case1-2-1_A+A-} for {\bf Node (2)} case. 
\begin{figure}[!h]
\begin{center}
\includegraphics[width=6.5cm]{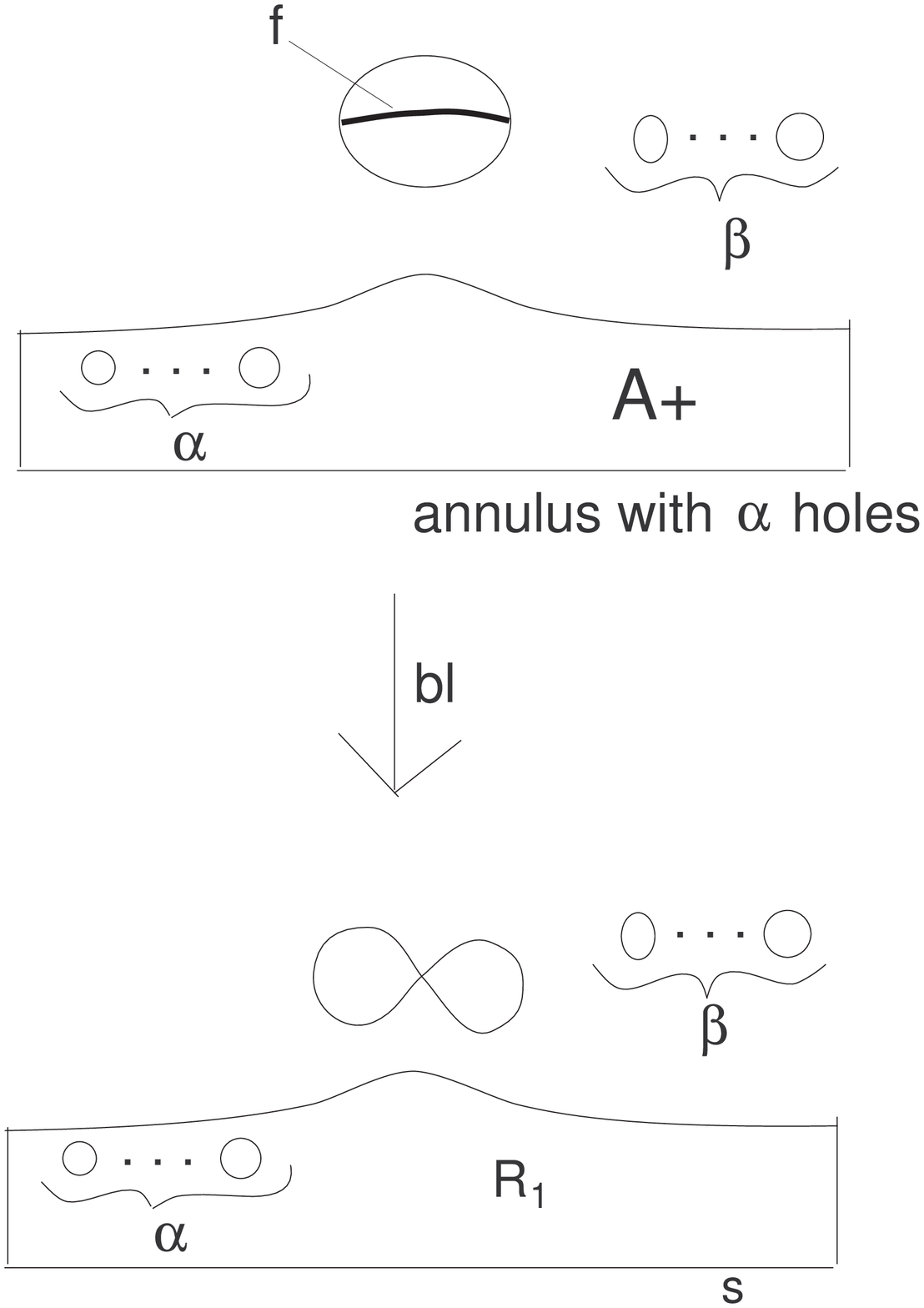}
\hspace{2cm}
\includegraphics[width=6.5cm]{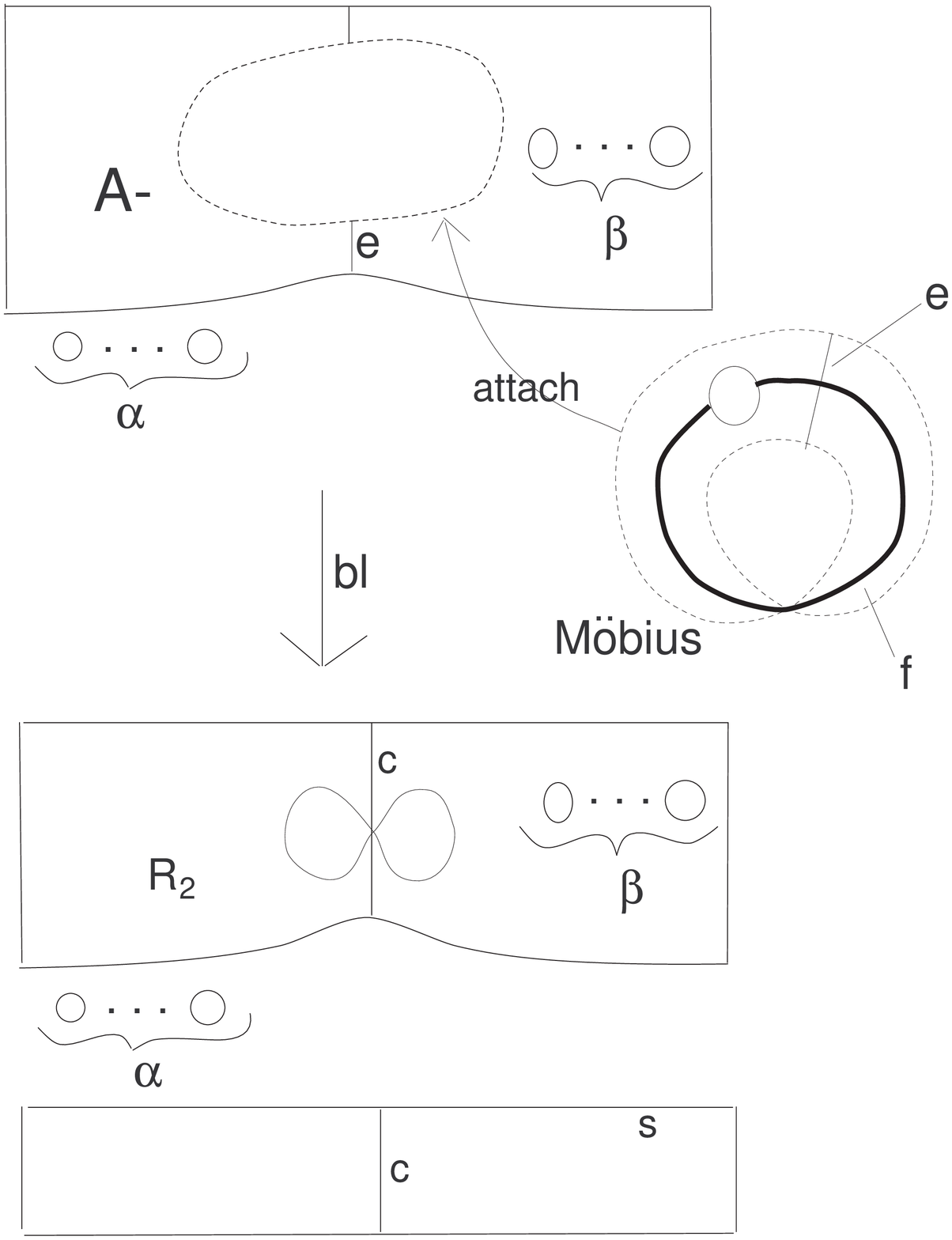}
\end{center}
\caption{The regions $A_+$ and $A_-$ in {\bf Node (2)} case}
\label{Case1-2-1_A+A-}
\end{figure}    

Suppose that $A_- = \pi(X_\varphi(\br))$, namely, 
the invariant $H(\psi) = 0$ for the involution $\varphi$. 
Then we have
$$
X_\varphi(\br)    \sim    \Sigma_{2+(\beta +1)} \cup \alpha S^2.
$$

Moreover, we have 
$(r(\psi),a(\psi),\delta_{\psi S}) \neq (10,10,0),\ (10,8,0)$, 
$$
r(\psi)=8 + \alpha - \beta,\ \ \ a(\psi)=8 - \alpha - \beta .
$$

On the other hand, we have 
$A_+ = \pi(X_{\wvarphi}(\br))$ and 
$$
X_{\wvarphi}(\br)    \sim    \Sigma_{1+ \alpha} \cup (\beta +1) S^2,
$$
Moreover, we have $H(\sigma \circ \psi) \cong \bz/2\bz$   
and 
$$
r(\sigma \circ \psi)=11 - \alpha + \beta,\ \ \ a(\sigma \circ \psi)=9 - \alpha - \beta .
$$

We omit the cusp cases.

\bigskip

\noindent
{\bf III.}\ {\bf Node (*)} case.

In this case, we see that
\begin{itemize}
\item $A_+$ is homeomorphic to $D^2 \setminus 2D^2$ and 
\item $A_-$ is the disjoint union of an {\bf M\"obius band} and an annulus. 
\end{itemize}
See Figure \ref{Case1-2-2_A+A-}. 
\begin{figure}[!h]
\begin{center}
\includegraphics[width=6.5cm]{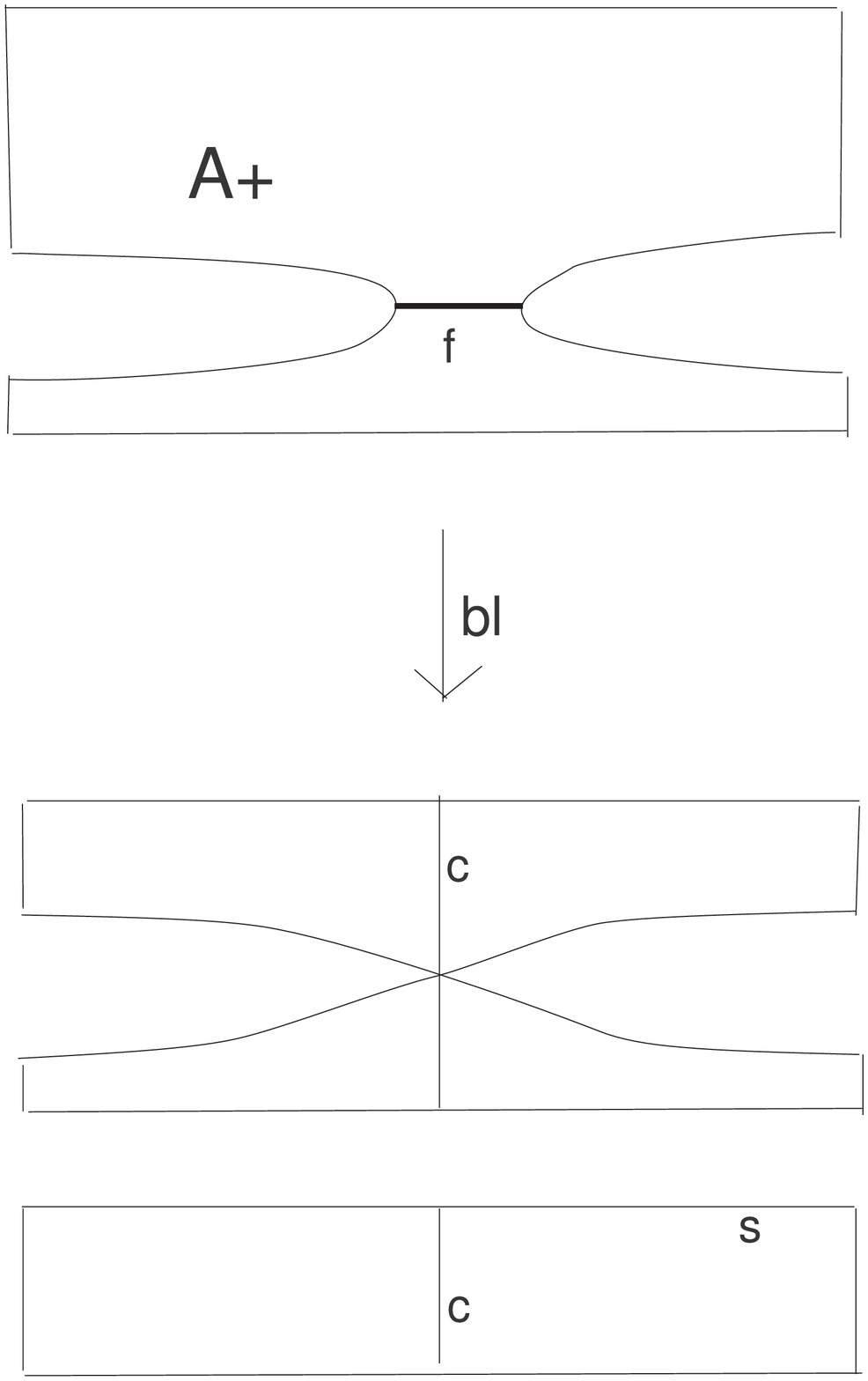}
\hspace{2cm}
\includegraphics[width=6.5cm]{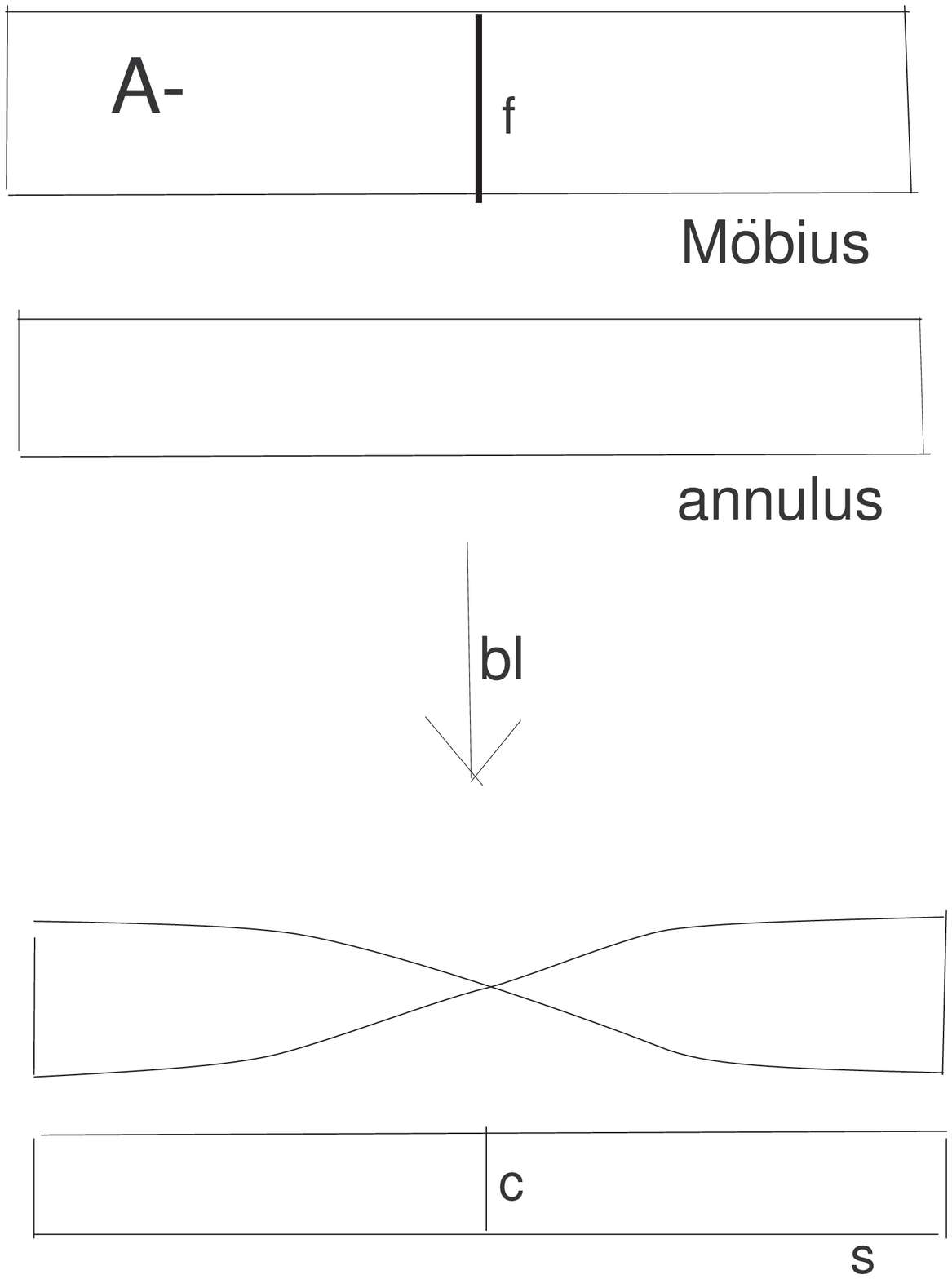}
\end{center}
\caption{The regions $A_+$ and $A_-$ in {\bf Node (*)} case}
\label{Case1-2-2_A+A-}
\end{figure}

Suppose that, for $\varphi$, $A_- = \pi(X_\varphi(\br))$. 
Then we see that
$$
X_\varphi(\br)    \sim    T^2 \cup T^2 ,
$$
and 
$$
A_+ = \pi(X_{\wvarphi}(\br)) , \ \ 
X_{\wvarphi}(\br) \sim \Sigma_2. 
$$
Moreover, we have 
$$
H(\psi) = 0 \ \ \mbox{and}\ \ 
(r(\psi),a(\psi),\delta_{\psi S}) = (10,8,0) .
$$

On the other hand, we have $H(\sigma \circ \psi) \cong \bz/2\bz$ and 
$$
(r(\sigma \circ \psi),a(\sigma \circ \psi),\delta_{\sigma \circ \psi S})=(9,9,0) .
$$

\subsection{Real isotopy types of real anti-bicanonical curves $\br \BL(A)$ 
with one real double point on $\br A^\prime_1$ on $\br \bff_4$}

In Node and Cusp cases, the number of connected components of $\br A^\prime_1$ 
equals to that of connected components of $\br A_1$. Hence, we have
$$
1 \leq \# \{\text{Connected\ components\ of}\ \br A^\prime_1 \} \leq 10.
$$

\noindent
In {\bf Node (1)} and {\bf Cusp (1)} cases, we have 
$$
0 \leq \alpha + \beta \leq 9.
$$

If for $\varphi$, $A_- = \pi(X_\varphi(\br))$, namely, the invariant $H(\psi) = 0$, then 
$
\alpha + \beta = 9 - a(\psi).
$

\bigskip

\noindent
In {\bf Node (2)} and {\bf Cusp (2)} cases, we have 
$$
0 \leq \alpha + \beta \leq 8.
$$

If for $\varphi$, $A_- = \pi(X_\varphi(\br))$, namely, the invariant $H(\psi) = 0$, then 
$
\alpha + \beta = 8 - a(\psi).
$

\bigskip

\noindent
In Isolated point case, the number of connected components of $\br A^\prime_1$ 
equals to that of connected components of $\br A_1$ plus $1$. 

Hence, we have 
$$
2 \leq \# \{\text{Connected\ components\ of}\ \br A^\prime_1 \} \leq 11.
$$

Hence, we have 
$$
0 \leq \alpha + \beta \leq 9.
$$

If for $\varphi$, $A_- = \pi(X_\varphi(\br))$, namely, the invariant $H(\psi) = 0$, then 
$
\alpha + \beta = 8 - a(\psi).
$

\bigskip

We already have all the isometry classes. Recall {\sc Table \ref{H=0} and Table \ref{H=F}} in Subsection \ref{enumeration}. 
\begin{theorem} \label{isotopy-F4-double}
We have the following. 
\begin{itemize}
\item 
For {\bf each} isometry class with $H(\psi) = 0$, 
the real isotopy type of a real anti-bicanonical curve 
$\br \BL(A) = \br s \cup  \br A^\prime_1$ on $\br \bff_4$ 
with {\bf one real nondegenerate double point} on $\br A^\prime_1$ on $\br \bff_4$ 
is one of the data listed up in Table \ref{3-1-1-delta_F_1}. 

\item 
For {\bf each} isometry class with $H(\psi) \cong \bz/2\bz$, 
the real isotopy type of a real anti-bicanonical curve 
$\br \BL(A) = \br s \cup  \br A^\prime_1$ on $\br \bff_4$ 
with {\bf one real nondegenerate double point} on $\br A^\prime_1$ on $\br \bff_4$ 
is one of the data listed up in Table \ref{3-1-1-delta_F_0}. 
\end{itemize}
Note that the isometry class No.$k$ and the isometry class No.$k^\prime$ 
are related integral involutions for each $k=1, \dots ,50$. 
The isometry class $(10,8,0,H(\psi) = 0)$ and $(9,9,0,H(\psi) \cong \bz/2\bz)$ are also related integral involutions. $\Box$
\end{theorem}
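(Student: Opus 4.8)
The plan is to combine the complete enumeration of isometry classes (Tables \ref{H=0} and \ref{H=F} from Subsection \ref{enumeration}, which give all realizable data $(r(\psi), a(\psi), \delta_{\psi S}, H(\psi))$) with the explicit topological dictionary worked out in the previous subsection, which translates each isometry class into the real isotopy type of $\br\BL(A) = \br s \cup \br A_1'$ on $\br\bff_4$. The starting point is Proposition \ref{allcases_F_4}: the singular component of $\br A_1'$ carrying the double point $P_0$ falls into one of the three nondegenerate families \textbf{Node (1)}, \textbf{Node (2)}, \textbf{Node (*)} (and their cusp analogues, which are \emph{excluded} here because we restrict to nondegenerate $P_0$, i.e.\ to irreducible $F$). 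So the proof proceeds case by case through these three families, and within each family the real isotopy type is recorded by the pair $(\alpha, \beta)$ counting ovals in the regions $R_1$ and $R_2$, together with the topology of the non-contractible component(s) already fixed by the case.

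First I would invoke the relations derived above: in \textbf{Node (1)} (type \textbf{I}), when $H(\psi) = 0$ (so $A_- = \pi(X_\varphi(\br))$) we have $r(\psi) = 9 + \alpha - \beta$ and $a(\psi) = 9 - \alpha - \beta$, hence $\alpha = (r(\psi) - a(\psi))/2$ and $\beta = (18 - r(\psi) - a(\psi))/2$, with the exclusion $(r(\psi), a(\psi), \delta_{\psi S}) \neq (10,10,0),\ (10,8,0)$; in \textbf{Node (2)} (type \textbf{II}) the same computation with $r(\psi) = 8 + \alpha - \beta$, $a(\psi) = 8 - \alpha - \beta$; and in \textbf{Node (*)} (type \textbf{III}) the single exceptional class $(r(\psi), a(\psi), \delta_{\psi S}) = (10,8,0)$, $H(\psi) = 0$. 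Inverting these linear relations shows that each isometry class with $H(\psi) = 0$ determines $(\alpha, \beta)$ uniquely within its type, and—crucially—the parity and range constraints $0 \le \alpha + \beta \le 9$ (for type \textbf{I}), $0 \le \alpha + \beta \le 8$ (for type \textbf{II}) recorded above must be checked to be consistent with the data actually appearing in Table \ref{H=0}. This produces, for each of the $12 + 39 = 51$ classes with $H(\psi) = 0$, the finite list of real isotopy types compatible with it, which is precisely the content of Table \ref{3-1-1-delta_F_1}.

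Next, for the classes with $H(\psi) \cong \bz/2\bz$, I would use the remark that $H(\psi) \cong \bz/2\bz$ forces $H(\sigma \circ \psi) = 0$, together with the computed transformation rules $r(\sigma \circ \psi) = 10 - \alpha + \beta$, $a(\sigma \circ \psi) = 10 - \alpha - \beta$ (type \textbf{I}), $r(\sigma \circ \psi) = 11 - \alpha + \beta$, $a(\sigma \circ \psi) = 9 - \alpha - \beta$ (type \textbf{II}), and $(r(\sigma\circ\psi), a(\sigma\circ\psi), \delta_{\sigma\circ\psi S}) = (9,9,0)$ (type \textbf{III}). Since $X_{\wvarphi}(\br) = \pi^{-1}(A_+)$ is doubly covered and its topology is likewise pinned down by $(\alpha,\beta)$, the $H(\psi) \cong \bz/2\bz$ case is obtained by running the same inversion on the related involution $\sigma \circ \psi$; this yields Table \ref{3-1-1-delta_F_0} and simultaneously verifies the asserted pairing No.$k \leftrightarrow$ No.$k'$ (and $(10,8,0, H=0) \leftrightarrow (9,9,0, H \cong \bz/2\bz)$), because passing from $\psi$ to $\sigma \circ \psi$ is exactly the operation swapping $\varphi$ and $\wvarphi$, i.e.\ swapping $A_+$ and $A_-$, i.e.\ swapping the roles of $\alpha$ and $\beta$.

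The main obstacle I anticipate is bookkeeping rather than conceptual: one must carefully confirm, for \emph{every} one of the $102/51$ isometry classes, that (a) at least one of the three nondegenerate types \textbf{I}, \textbf{II}, \textbf{III} is compatible with the arithmetic data (some classes may a priori be compatible with more than one type, which is exactly why Theorem \ref{isotopy-F4-double} gives a list of \emph{candidates} rather than a unique type, and why realizability is left open), and (b) the oval counts $(\alpha,\beta)$ obtained are non-negative integers satisfying the stated inequalities, with the exceptional exclusions $(10,10,0)$ and $(10,8,0)$ handled correctly. There is also a subtlety in type \textbf{I} versus type \textbf{II}: the same $(r(\psi),a(\psi))$ can arise from \textbf{Node (1)} with one value of $(\alpha,\beta)$ and from \textbf{Node (2)} with another (shifted by the "$+1$" coming from the extra disk in $A_+$), so both possibilities must be listed; likewise the \textbf{Isolated point} subcase of type \textbf{I} and the corresponding subcase of type \textbf{II} must be included alongside the node subcases, since the connected-component count alone ($10$ versus $11$) does not always distinguish them at the level of the arithmetic invariants. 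Once this finite verification is completed and arranged into the two tables, the theorem follows. $\Box$
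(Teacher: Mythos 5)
Your proposal is correct and follows essentially the same route as the paper: reduce via Proposition \ref{allcases_F_4} to the nondegenerate types (irreducible $F$, hence no cusps), use the relations between $(\alpha,\beta)$ and $(r(\psi),a(\psi))$ --- and their counterparts for the related involution $\sigma\circ\psi$ when $H(\psi)\cong\bz/2\bz$ --- derived in the preceding subsections, and invert these linear relations class by class to produce the tables. The only slip is expository: the Isolated point case is a fourth nondegenerate possibility grouped with Node~(1) in type \textbf{I} (it is not a subcase of type \textbf{II}), but you do account for it by the end, so the argument stands.
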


\setlength{\topmargin}{-0.6in}
\setlength{\textheight}{10in}

\begin{table}[!ht]
\begin{center}
{\footnotesize 
\begin{tabular}{|r||r|r|c||r|r||cc|cc|cc|c|}
\hline
\multicolumn{4}{|l||}{Isometry class}&\multicolumn{2}{l||}{} 
&\multicolumn{2}{l|}{Node (1)} &\multicolumn{2}{l|}{Isolated point} & \multicolumn{2}{l|}{Node (2)} & Node (*) \\
\multicolumn{4}{|l||}{of type $((3,1,1),- \id)$}&\multicolumn{2}{l||}{} &\multicolumn{2}{l|}{} &\multicolumn{2}{l|}{} & \multicolumn{2}{l|}{} &  \\
\hline
\hline
No.& $r(\psi)$ & $a(\psi)$ & $\delta_{\varphi S}$ & $g$ & $k$ & $\alpha$   & $\beta$ & $\alpha$ & $\beta$ & $\alpha$ & $\beta$ &     \\ \hline
1&1 & 1 & 1 & 10 & 0 & 0 & 8 & 0 & 8 &0 & 7 & \\ \hline
2&2 & 0 & 0 & 10 & 1 & 1 & 8 & 1 & 8 &1 & 7 & \\ \hline
3&2 & 2 & 0 & 9  & 0 & 0 & 7 & 0 & 7 &0 & 6 & \\ \hline
4&2 & 2 & 1 & 9  & 0 & 0 & 7 & 0 & 7 &0 & 6 & \\ \hline
5&3 & 1 & 1 & 9  & 1 & 1 & 7 & 1 & 7 &1 & 6 & \\ \hline
6&3 & 3 & 1 & 8  & 0 & 0 & 6 & 0 & 6 &0 & 5 & \\ \hline
7&4 & 2 & 1 & 8  & 1 & 1 & 6 & 1 & 6 &1 & 5 & \\ \hline
8&4 & 4 & 1 & 7  & 0 & 0 & 5 & 0 & 5 &0 & 4 & \\ \hline
9&5 & 3 & 1 & 7  & 1 & 1 & 5 & 1 & 5 &1 & 4 & \\ \hline
10&5 & 5 & 1 & 6  & 0 & 0 & 4 & 0 & 4 &0 & 3 & \\ \hline
11&6 & 2 & 0 & 7  & 2 & 2 & 5 & 2 & 5 &2 & 4 & \\ \hline
12&6 & 4 & 0 & 6  & 1 & 1 & 4 & 1 & 4 &1 & 3 & \\ \hline
13&6 & 4 & 1 & 6  & 1 & 1 & 4 & 1 & 4 &1 & 3 & \\ \hline
14&6 & 6 & 1 & 5  & 0 & 0 & 3 & 0 & 3 &0 & 2 & \\ \hline
15&7 & 3 & 1 & 6  & 2 & 2 & 4 & 2 & 4 &2 & 3 & \\ \hline
16&7 & 5 & 1 & 5  & 1 & 1 & 3 & 1 & 3 &1 & 2 & \\ \hline
17&7 & 7 & 1 & 4  & 0 & 0 & 2 & 0 & 2 &0 & 1 & \\ \hline
18&8 & 2 & 1 & 6  & 3 & 3 & 4 & 3 & 4 &3 & 3 & \\ \hline
19&8 & 4 & 1 & 5  & 2 & 2 & 3 & 2 & 3 &2 & 2 & \\ \hline
20&8 & 6 & 1 & 4  & 1 & 1 & 2 & 1 & 2 &1 & 1 & \\ \hline
21&8 & 8 & 1 & 3  & 0 & 0 & 1 & 0 & 1 &0 & 0 & \\ \hline
22&9 & 1 & 1 & 6  & 4 & 4 & 4 & 4 & 4 &4 & 3 & \\ \hline
23&9 & 3 & 1 & 5  & 3 & 3 & 3 & 3 & 3 &3 & 2 & \\ \hline
24&9 & 5 & 1 & 4  & 2 & 2 & 2 & 2 & 2 &2 & 1 & \\ \hline
25&9 & 7 & 1 & 3  & 1 & 1 & 1 & 1 & 1 &1 & 0 & \\ \hline
26&9 & 9 & 1 & 2  & 0 & 0 & 0 & 0 & 0 &  &   & \\ \hline
27&10 & 0 & 0 & 6 & 5 & 5 & 4 & 5 & 4 &5 & 3 & \\ \hline
28&10 & 2 & 0 & 5 & 4 & 4 & 3 & 4 & 3 &4 & 2 & \\ \hline
29&10 & 2 & 1 & 5 & 4 & 4 & 3 & 4 & 3 &4 & 2 & \\ \hline
30&10 & 4 & 0 & 4 & 3 & 3 & 2 & 3 & 2 &3 & 1 & \\ \hline
31&10 & 4 & 1 & 4 & 3 & 3 & 2 & 3 & 2 &3 & 1 & \\ \hline
32&10 & 6 & 0 & 3 & 2 & 2 & 1 & 2 & 1 &2 & 0 & \\ \hline
33&10 & 6 & 1 & 3 & 2 & 2 & 1 & 2 & 1 &2 & 0 & \\ \hline
  &10 & 8 & 0 & 2 & 1 &   & &  & &  &   & $T^2 \cup T^2$ \\ \hline
34&10 & 8 & 1 & 2 & 1 & 1 & 0 & 1 & 0 &  &   & \\ \hline
35&11 & 1 & 1 & 5 & 5 & 5 & 3 & 5 & 3 &5 & 2 & \\ \hline
36&11 & 3 & 1 & 4 & 4 & 4 & 2 & 4 & 2 &4 & 1 & \\ \hline
37&11 & 5 & 1 & 3 & 3 & 3 & 1 & 3 & 1 &3 & 0 & \\ \hline
38&11 & 7 & 1 & 2 & 2 & 2 & 0 &  2 & 0 &  &   & \\ \hline
39&12 & 2 & 1 & 4 & 5 & 5 & 2 & 5 & 2 &5 & 1 & \\ \hline
40&12 & 4 & 1 & 3 & 4 & 4 & 1 & 4 & 1 &4 & 0 & \\ \hline
41&12 & 6 & 1 & 2 & 3 & 3 & 0 & 3 & 0 &  &   & \\ \hline
42&13 & 3 & 1 & 3 & 5 & 5 & 1 & 5 & 1 &5 & 0 & \\ \hline
43&13 & 5 & 1 & 2 & 4 & 4 & 0 & 4 & 0 &  &   & \\ \hline
44&14 & 2 & 0 & 3 & 6 & 6 & 1 & 6 & 1 &6 & 0 & \\ \hline
45&14 & 4 & 0 & 2 & 5 & 5 & 0 & 5 & 0 &  &   & \\ \hline
46&14 & 4 & 1 & 2 & 5 & 5 & 0 & 5 & 0 &  &   & \\ \hline
47&15 & 3 & 1 & 2 & 6 & 6 & 0 & 6 & 0 &  &   & \\ \hline
48&16 & 2 & 1 & 2 & 7 & 7 & 0 & 7 & 0 &  &   & \\ \hline
49&17 & 1 & 1 & 2 & 8 & 8 & 0 & 8 & 0 &  &   & \\ \hline
50&18 & 0 & 0 & 2 & 9 & 9 & 0 & 9 & 0 &  &   & \\
\hline
\end{tabular}
}
\end{center}
\caption{Candidates for real isotopy types of 
real anti-bicanonical curves $\br \BL(A)$ 
with one real {\bf nondegenerate} double point on $\br A^\prime_1$ on $\br \bff_4$ 
for each isometry class of type $(S,\theta) \cong ((3,1,1),- \id)$ with $H(\psi) = 0$}
\label{3-1-1-delta_F_1}
\end{table}

\begin{table}[h]
\begin{center}
{\footnotesize 
\begin{tabular}{|r||r|r|c||r|r||cc|cc|cc|c|}
\hline
\multicolumn{4}{|l||}{Isometry class}&\multicolumn{2}{l||}{} 
&\multicolumn{2}{l|}{Node (1)} &\multicolumn{2}{l|}{Isolated point} & \multicolumn{2}{l|}{Node (2)} & Node (*) \\
\multicolumn{4}{|l||}{of type $((3,1,1),- \id)$}&\multicolumn{2}{l||}{} &\multicolumn{2}{l|}{} &\multicolumn{2}{l|}{} & \multicolumn{2}{l|}{} &  \\
\hline
\hline
No.& $r(\psi)$ & $a(\psi)$ & $ \delta_{\varphi S}$ & $g$  & $k$ & $\alpha$ & $\beta$ & $\alpha$ & $\beta$ & $\alpha$ & $\beta$ &     \\ \hline
1'&18 & 2 & 1 & 1 & 8 & 0 & 8& 0 & 8 &0 & 7& \\ \hline
2'&17 & 1 & 0 & 2 & 8 & 1 & 8& 1 & 8 &1 & 7& \\ \hline
3'&17 & 3 & 0 & 1 & 7 & 0 & 7& 0 & 7 &0 & 6& \\ \hline
4'&17 & 3 & 1 & 1 & 7 & 0 & 7& 0 & 7 &0 & 6& \\ \hline
5'&16 & 2 & 1 & 2 & 7 & 1 & 7& 1 & 7 &1 & 6& \\ \hline
6'&16 & 4 & 1 & 1 & 6 & 0 & 6& 0 & 6 &0 & 5& \\ \hline
7'&15 & 3 & 1 & 2 & 6 & 1 & 6& 1 & 6 &1 & 5& \\ \hline
8'&15 & 5 & 1 & 1 & 5 & 0 & 5& 0 & 5 &0 & 4& \\ \hline
9'&14 & 4 & 1 & 2 & 5 & 1 & 5& 1 & 5 &1 & 4& \\ \hline
10'&14 & 6 & 1 & 1 & 4 & 0 & 4& 0 & 4 &0 & 3& \\ \hline
11'&13 & 3 & 0 & 3 & 5 & 2 & 5& 2 & 5 &2 & 4& \\ \hline
12'&13 & 5 & 0 & 2 & 4 & 1 & 4& 1 & 4 &1 & 3& \\ \hline
13'&13 & 5 & 1 & 2 & 4 & 1 & 4& 1 & 4 &1 & 3& \\ \hline
14'&13 & 7 & 1 & 1 & 3 & 0 & 3& 0 & 3 &0 & 2& \\ \hline
15'&12 & 4 & 1 & 3 & 4 & 2 & 4& 2 & 4 &2 & 3& \\ \hline
16'&12 & 6 & 1 & 2 & 3 & 1 & 3& 1 & 3 &1 & 2& \\ \hline
17'&12 & 8 & 1 & 1 & 2 & 0 & 2& 0 & 2 &0 & 1& \\ \hline
18'&11 & 3 & 1 & 4 & 4 & 3 & 4& 3 & 4 &3 & 3& \\ \hline
19'&11 & 5 & 1 & 3 & 3 & 2 & 3& 2 & 3 &2 & 2& \\ \hline
20'&11 & 7 & 1 & 2 & 2 & 1 & 2& 1 & 2 &1 & 1& \\ \hline
21'&11 & 9 & 1 & 1 & 1 & 0 & 1& 0 & 1 &0 & 0& \\ \hline
22'&10 & 2 & 1 & 5 & 4 & 4 & 4& 4 & 4 &4 & 3& \\ \hline
23'&10 & 4 & 1 & 4 & 3 & 3 & 3& 3 & 3 &3 & 2& \\ \hline
24'&10 & 6 & 1 & 3 & 2 & 2 & 2& 2 & 2 &2 & 1& \\ \hline
25'&10 & 8 & 1 & 2 & 1 & 1 & 1& 1 & 1 &1 & 0& \\ \hline
26'&10 & 10 &1 & 1 & 0 & 0 & 0& 0 & 0 &  &  & \\ \hline
27'&9 & 1 & 0 &  6 & 4 & 5 & 4& 5 & 4 &5 & 3& \\ \hline
28'&9 & 3 & 0 &  5 & 3 & 4 & 3& 4 & 3 &4 & 2& \\ \hline
29'&9 & 3 & 1 &  5 & 3 & 4 & 3& 4 & 3 &4 & 2& \\ \hline
30'&9 & 5 & 0 &  4 & 2 & 3 & 2& 3 & 2 &3 & 1& \\ \hline
31'&9 & 5 & 1 &  4 & 2 & 3 & 2& 3 & 2 &3 & 1& \\ \hline
32'&9 & 7 & 0 &  3 & 1 & 2 & 1& 2 & 1 &2 & 0& \\ \hline
33'&9 & 7 & 1 &  3 & 1 & 2 & 1& 2 & 1 &2 & 0& \\ \hline
   &9 & 9 & 0 &  2 & 0 & 1 & 0& 1 & 0 &  &  & $\Sigma_2$ \\ \hline
34'&9 & 9 & 1 &  2 & 0 & 1 & 0& 1 & 0 &  &  & \\ \hline
35'&8 & 2 & 1 &  6 & 3 & 5 & 3& 5 & 3 &5 & 2& \\ \hline
36'&8 & 4 & 1 &  5 & 2 & 4 & 2& 4 & 2 &4 & 1& \\ \hline
37'&8 & 6 & 1 &  4 & 1 & 3 & 1& 3 & 1 &3 & 0& \\ \hline
38'&8 & 8 & 1 &  3 & 0 & 2 & 0& 2 & 0 & & & \\ \hline
39'&7 & 3 & 1 &  6 & 2 & 5 & 2& 5 & 2 &5 & 1& \\ \hline
40'&7 & 5 & 1 &  5 & 1 & 4 & 1& 4 & 1 &4 & 0& \\ \hline
41'&7 & 7 & 1 &  4 & 0 & 3 & 0& 3 & 0 & & & \\ \hline
42'&6 & 4 & 1 &  6 & 1 & 5 & 1& 5 & 1 &5 & 0& \\ \hline
43'&6 & 6 & 1 &  5 & 0 & 4 & 0& 4 & 0 & & & \\ \hline
44'&5 & 3 & 0 &  7 & 1 & 6 & 1& 6 & 1 &6 & 0& \\ \hline
45'&5 & 5 & 0 &  6 & 0 & 5 & 0& 5 & 0 &  &  & \\ \hline
46'&5 & 5 & 1 &  6 & 0 & 5 & 0& 5 & 0 &  &  & \\ \hline
47'&4 & 4 & 1 &  7 & 0 & 6 & 0& 6 & 0 &  &  & \\ \hline
48'&3 & 3 & 1 &  8 & 0 & 7 & 0& 7 & 0 &  &  & \\ \hline
49'&2 & 2 & 1 &  9 & 0 & 8 & 0& 8 & 0 &  &  & \\ \hline
50'&1 & 1 & 0 & 10 & 0 & 9 & 0& 9 & 0 &  &  & \\ \hline
\end{tabular}
} 
\end{center}
\caption{Candidates for real isotopy types of 
real anti-bicanonical curves $\br \BL(A)$ 
with one real {\bf nondegenerate} double point on $\br A^\prime_1$ on $\br \bff_4$ 
for each isometry class of type $(S,\theta) \cong ((3,1,1),- \id)$ with $H(\psi) \cong \bz/2\bz$}
\label{3-1-1-delta_F_0}
\end{table}

\clearpage

\setlength{\topmargin}{-0.4in}
\setlength{\textheight}{9.5in}

\begin{remark} \label{realizability1}
By Theorem \ref{isotopy-F4-double}, 
we find that 
each isometry class of integral involutions of the K3 lattice $\bl_{K3}$ of type $(S,\theta) \cong ((3,1,1),- \id)$ 
may contain several real isotopy types (for example, Node (1), Isolated point, and Node (2)) 
of real anti-bicanonical curves $\br \BL(A)$ with one real nondegenerate double point on $\br A^\prime_1$ on $\br \bff_4$. 
Hence, the realizability of all the real isotopy types listed in Table \ref{3-1-1-delta_F_1},\ref{3-1-1-delta_F_0} 
has not been solved yet. 
However, we can take a real $2$-elementary K3 surface for which $F$ is irreducible 
in the same connected component of the moduli (see \cite{NikulinSaito07}, notes after Theorem 2). 
Hence, for each isometry class, at least one of real isotopy types is realizable. 
Especially, 
Node (*) with the isometry class $(10,8,0,H(\psi) = 0)$ is realizable. 
It is conjectured that Node (1) with $(\alpha,\beta)=(1,0)$ and Isolated point with $(\alpha,\beta)=(1,0)$ do not exist 
for the isometry class $(9,9,0,H(\psi) \cong \bz/2\bz)$. 
See also Remark \ref{interesting-correspondence} below. 
\end{remark}

\begin{remark}[\cite{NikulinSaito07}]  \label{F3} 
On the other hand, 
if we contract the exceptional curve $e=\pi(E)$ in $Y$, then 
we get a map onto the $3$-th Hirzebruch surface $\bff_3$: $\BL_1 : Y \to \bff_3 .$ 
Then $s:=\BL_1(A_0)$ is the exceptional section of $\bff_3$ with $s^2=-3$ 
and $c:=\BL_1(f)$ is a fiber. 
We have $\BL_1(A) = s + \BL_1(A_1) \in |-2K_{\bff_3}|$. 
$A_1 := \BL_1(A_1)$ is a real {\bf nonsingular} curve of genus $9$. 
The real isotopic classification of $\br A_1$ on $\br \bff_3$ 
was already done in \cite{NikulinSaito07}, Theorem 1. 
\end{remark}

\bigskip

In order to {\bf distinguish} the real isotopy types of real anti-bicanonical curves 
with one real double point on $\br \bff_4$, 
we expect that Itenberg's argument of the rigid isotopic classification of 
real curves of degree $6$ on $\br \bp^2$ with one nondegenerate double point are helpful. 
See \cite{Itenberg92},\cite{Itenberg94} and \cite{Itenberg95}, and also the last section \ref{period domain and problems} of this paper. 
This classification corresponds to 
that of real {\bf nonsingular} curves $A$ in $|-2K_{\bff_1}|$ on the first real Hirzebruch surface $\br \bff_1$ 
when we blow up $\bp^2$ at the nondegenerate double point to $\bff_1$. 
The double coverings $X$ of $\bff_1$ ramified along the nonsingular curves $A$ are 
real $2$-elementary K3 surfaces of type $(S,\theta) \cong (\langle 2 \rangle \oplus \langle -2 \rangle, - \id)$. 
Moreover, the classification of real curves of degree $6$ on $\br \bp^2$ with one nondegenerate double point 
is related to that of 
``non-increasing simplest degenerations" (conjunctions and contractions) of 
real {\bf nonsingular} curves of degree $6$ on $\br \bp^2$. See \cite{Itenberg92} and \cite{Itenberg94}. 

\medskip

Thus we next pay attention to the degenerations of real nonsingular anti-bicanonical curves on $\br \bff_4$. 

\section{Degenerations of nonsingular real anti-bicanonical curves on $\br \bff_4$}

\subsection{Review of nonsingular real anti-bicanonical curves on $\br \bff_4$}

The contents of this subsection are quoted from the last section of \cite{NikulinSaito05}. 

Let $\bu$ be the even unimodular lattice of signature $(1,1)$ (the hyperbolic plane). 
Consider real $2$-elementary K3 surfaces $(X,\tau,\varphi)$ of type $(S,\theta) \cong (\bu,- \id)$. 
All these real $2$-elementary K3 surfaces are ($\da$)-nondegenerate. 

Let $A$ be the fixed point set of $\tau$. Then $A$ is a real nonsingular curve. 
We have $Y := X/\tau = \bff_4$. 
Let $\pi:X\to \bff_4$ be the quotient map. 
We use the same symbol $A$ for its image in $\bff_4$ by $\pi$. 
Then $A \in |-2K_{\bff_4}|$. 
Let $s$ be the exceptional section with $s^2=-4$ of $\bff_4$, 
and $c$ the fiber of the fibration $f: \bff_4\to s$, where $c^2=0$. 
We have $-2K_{\bff_4} \sim 12c+4s$. 
The nonsingular curve $A$ has {\bf two irreducible components} $s$ and $A_1$; 
$$A = s \cup A_1\ \ \mbox{(disjoint union)}.$$
Conversely, 
any nonsingular curve $A_1$ in $|12c+3s|$ 
gives a nonsingular curve $A=s+A_1$ in $|-2K_{\bff_4}|$.  

We set $C:=\pi^\ast (c)$ and $E:=\pi^*(s)/2$ in $H_2(X, \bz)$. 
Then $C^2=0$, $E^2=-2$ and $C\cdot E=1$. 
$C$ and $E$ generate the fixed part of $\tau_* : H_2(X, \bz) \to H_2(X, \bz)$. 
Hence, $S \cong \bz C + \bz E \cong \bu$. 

\medskip

An isometry class of an integral involution $(\bl_{K3},\psi)$ of type $(S,\theta) \cong (\bu,- \id)$ 
is determined (see \cite{NikulinSaito05}) by the data
\begin{equation}
(r(\psi),\ a(\psi),\ \delta_\psi = \delta_{\psi S}).
\label{invF_4}
\end{equation}

The complete list of the data (\ref{invF_4}) 
is given in Section 7 (Figure 30) of \cite{NikulinSaito05}. 

There are 
$14$ isometry classes with $\delta_\psi = 0$ and
$49$ isometry classes with $\delta_\psi = 1$. Thus we have $63$ classes.

If we identify related integral involutions, there are
$10$ isometry classes with $\delta_\psi = 0$ and
$27$ isometry classes with $\delta_\psi = 1$. Thus we have $37$ classes.

\medskip

If $(r(\psi),a(\psi),\delta_\psi)=(10,10,0)$, 
then $\br \bff_4 = \emptyset .$ Hence, $\br A = \emptyset .$

If $(r(\psi),a(\psi),\delta_\psi)\not=(10,10,0)$,
then $\br \bff_4$ is not empty and homeomorphic to a $2$-torus. 
We have $\br A \supset \br s \neq \emptyset .$
The real curve $\br A_1$ is contained in the open cylinder $\br \bff_4 \setminus \br s$. 
The region $A_- = \pi(X_\varphi(\br))$ with the invariants (\ref{invF_4}) 
has the real isotopy type given in Figure \ref{Ugraph-picture}.

When $(r(\psi),a(\psi),\delta_\psi)\not=(10,10,0)$ and $\not=(10,8,0)$, 
we set 
$$g:=(22- r(\psi) - a(\psi))/2\ \ \ \mbox{and}\ \ \ k:=(r(\psi) - a(\psi))/2.$$

\medskip

\begin{figure}[!h]
\begin{center}
\includegraphics[width=5cm]{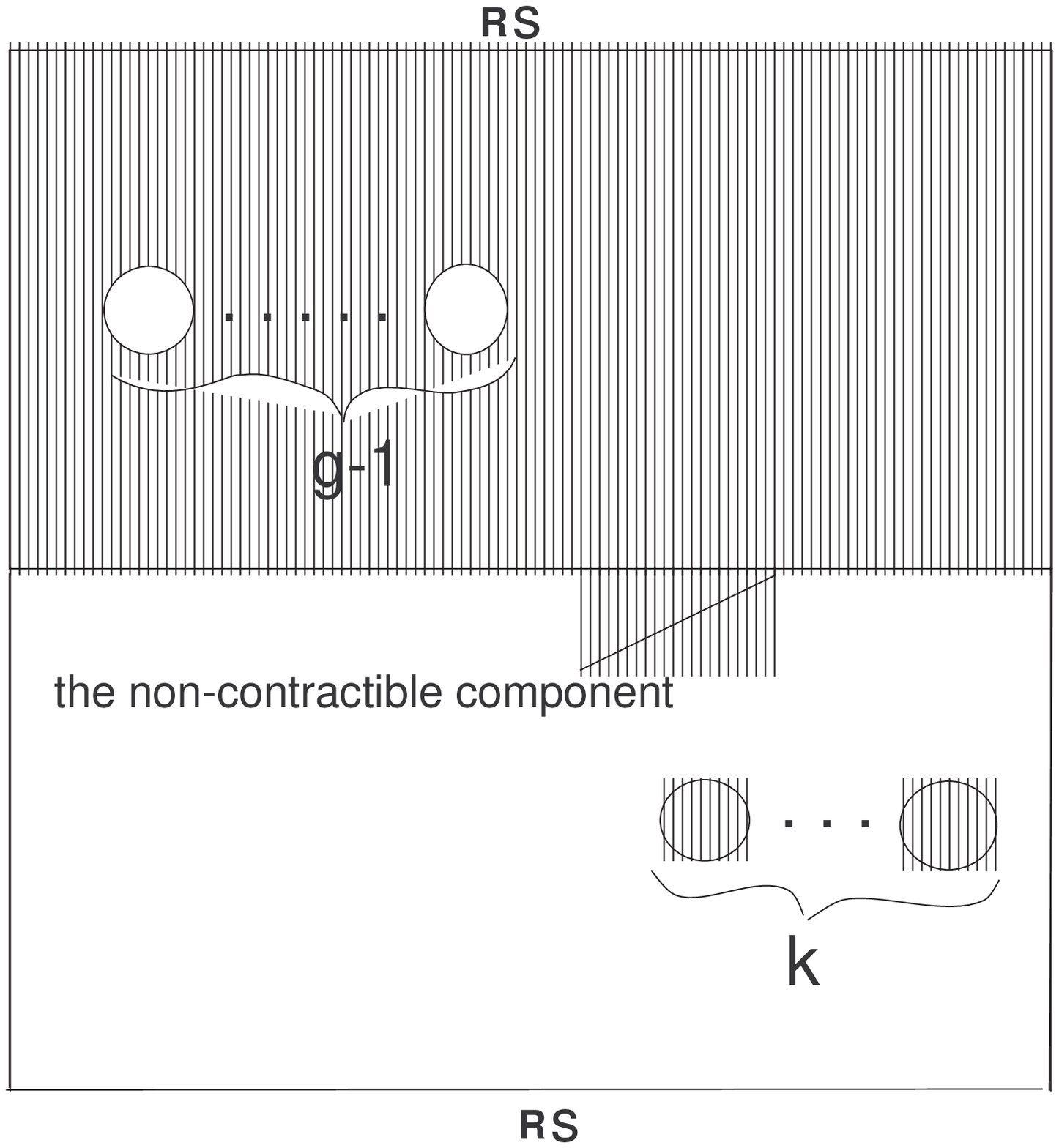}
\hspace{1.5cm} 
\includegraphics[width=5cm]{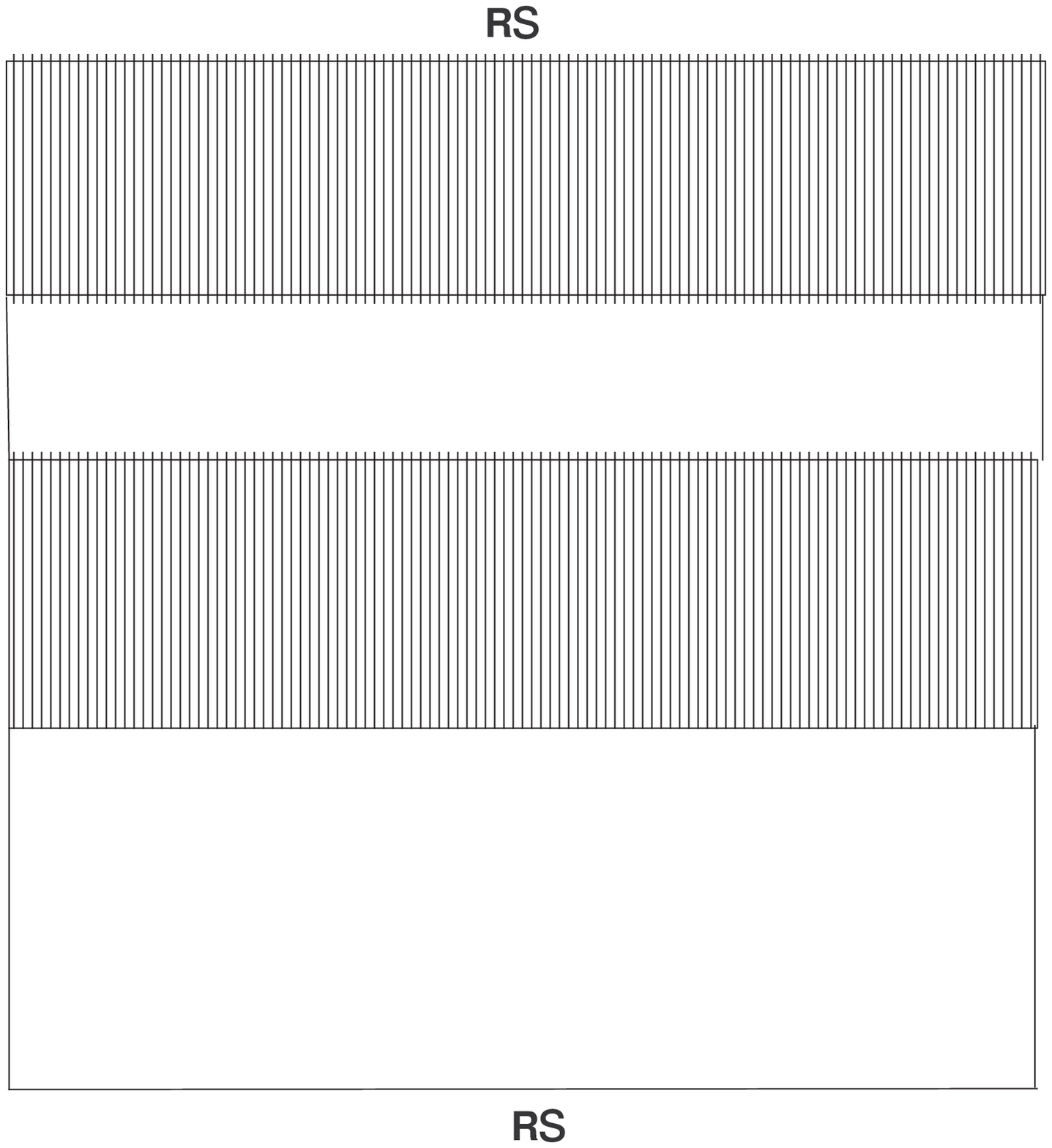} 
\end{center}
\caption{A region $A_-$ with $(r(\psi),a(\psi),\delta_\psi)\not=(10,8,0),(10,10,0)$ and 
         a region $A_-$ with $(r(\psi),a(\psi),\delta_\psi)=(10,8,0)$}
\label{Ugraph-picture}
\end{figure}

Since all real $2$-elementary K3 surfaces of type $(S,\theta) \cong (\bu,- \id)$ are ($\da$)-nondegenerate, 
by Theorem \ref{theorem2005moduli}, we have: 
\begin{theorem}[\cite{NikulinSaito05}, Theorem 27]
A connected component of the moduli of 
the regions $A_- := \pi(X_\varphi(\br))$, curves $A \in |-2K_{\bff_4}|$, 
up to the action of the automorphisms group of $\bff_4$ over $\br$ 
is determined by 
the data (\ref{invF_4}), equivalently, 
the real isotopy types of $A_-$ and the invariants $\delta_\psi = \delta_{\psi S}$. 
All the data are given in Section 7 (Figure 30) of \cite{NikulinSaito05}. 
See also Figure \ref{Ugraph-picture}.
\end{theorem}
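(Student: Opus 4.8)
The plan is to deduce this from Theorem~\ref{theorem2005moduli}, the purely arithmetic classification of the relevant integral involutions, and the known topological meaning of their invariants. First, since every real $2$-elementary K3 surface of type $(S,\theta)\cong(\bu,-\id)$ is $(\da\br)$-nondegenerate (as recalled above, $(\da)$-nondegeneracy forces $(\da\br)$-nondegeneracy), Theorem~\ref{theorem2005moduli} applies verbatim and identifies the connected components of the moduli space of marked real $2$-elementary K3 surfaces of type $(\bu,-\id)$ with the isometry classes \emph{with respect to $G$} of integral involutions $\psi$ of $\bl_{K3}$ of type $(\bu,-\id)$ having $\bl_{K3}^\psi$ hyperbolic. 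Next I would check, exactly as in Remark~\ref{G=1}, that $G=\{\id\}$ here: because $S\cong\bu$ is unimodular, $\bl_{K3}$ splits orthogonally as $S\oplus S^\perp_{\bl_{K3}}$, so any $\delta=(\delta_1+\delta_2)/2\in\bl_{K3}$ with $\delta_1\in S$ and $\delta_2\in S^\perp_{\bl_{K3}}$ forces $\delta_1\in 2S$ and hence $\delta_1^2\in 8\bz$; thus $\Delta(S,\bl_{K3})^{(-4)}=\emptyset$ and ``isometry with respect to $G$'' is ordinary isometry.

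We are thereby reduced to classifying, up to isometry, integral involutions $\psi$ of $\bl_{K3}$ of type $(\bu,-\id)$ with hyperbolic fixed lattice. At this point I would invoke Nikulin's theory of involutions of even lattices: $\bl_{K3}^\psi$ and its orthogonal complement in $\bl_{K3}$ are both $2$-elementary (the glue group is $2$-elementary since $2\bl_{K3}\subset\bl_{K3}^\psi\oplus\bl_{K3}^{-\psi}$), the uniqueness theorem for $2$-elementary even lattices (\cite{Nikulin81}) together with the discriminant-form/gluing formalism (\cite{Nikulin79}) shows the isometry class of $\psi$ is determined by the triple $(r(\psi),a(\psi),\delta_\psi)$, and a finite computation (carried out in \cite{NikulinSaito05}) produces the complete list of triples for which such a $\psi$ exists --- the list displayed in Section~7, Figure~30 of \cite{NikulinSaito05}. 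This establishes the asserted bijection between connected components and the data~(\ref{invF_4}) on the level of \emph{marked} surfaces.

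To descend to the moduli of the geometric data $(\bff_4,A,A_-)$ up to $\mathrm{Aut}(\bff_4/\br)$, I would observe that the assignment $(X,\tau,\varphi)\mapsto\big(X/\tau=\bff_4,\ A=X^\tau,\ A_-=\pi(X_\varphi(\br))\big)$ followed by forgetting the marking is exactly the quotient of the marked moduli space by the change-of-marking action of $O(\bl_{K3})$: two markings of a fixed $(X,\tau,\varphi)$ satisfying the conditions of Definition~\ref{marked_real_K3} differ by an automorphism of $(\bl_{K3},\psi)$, while conversely any automorphism of $(\bff_4,A)$ over $\br$ lifts canonically to $(X,\tau,\varphi)$ and acts compatibly on markings. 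Hence forgetting the marking matches exactly one $\psi$-isometry-class with each connected component of the unmarked moduli of $(\bff_4,A,A_-)$ modulo $\mathrm{Aut}(\bff_4/\br)$, so the bijection of the previous paragraph persists for the geometric objects.

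Finally, for the ``equivalently'' clause I would use the standard topological interpretation of $r(\psi)$ and $a(\psi)$ for the half-covered region $A_-$ (\cite{NikulinSaito05}): when $(r(\psi),a(\psi),\delta_\psi)\ne(10,10,0),(10,8,0)$, putting $g=(22-r(\psi)-a(\psi))/2$ and $k=(r(\psi)-a(\psi))/2$, the region $A_-$ has the real isotopy type of the model of Figure~\ref{Ugraph-picture} determined by $(g,k)$, and conversely $(g,k)$ is recovered from the real isotopy type of $A_-$; since $a(\psi)=11-g-k$ and $r(\psi)=11-g+k$, the isotopy type of $A_-$ carries precisely the information of $(r(\psi),a(\psi))$, and adjoining $\delta_\psi$ gives the full triple~(\ref{invF_4}). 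The two exceptional triples are treated separately: $(10,10,0)$ gives $\br\bff_4=\emptyset$ and hence $\br A=\emptyset$, while $(10,8,0)$ gives the four-line region on the right of Figure~\ref{Ugraph-picture}. The step I expect to be the main obstacle is the descent of the third paragraph --- verifying that the forgetful map to isomorphism classes of $(\bff_4,A,A_-)$ and the $\mathrm{Aut}(\bff_4/\br)$-action exactly match the change-of-marking action, neither merging distinct $\psi$-classes nor splitting one --- together with the careful bookkeeping of the two degenerate triples.
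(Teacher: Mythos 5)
Your proposal is correct and follows essentially the same route as the paper, which derives the statement in one line from Theorem~\ref{theorem2005moduli} after noting that all real $2$-elementary K3 surfaces of type $(\bu,-\id)$ are $(\da)$-nondegenerate, delegating the arithmetic classification by $(r(\psi),a(\psi),\delta_\psi)$, the triviality of $G$, and the descent to the unmarked moduli of $(\bff_4,A,A_-)$ to \cite{NikulinSaito05}. You have merely filled in the details (correctly) that the paper leaves to that reference.
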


\medskip

\subsection{Degenerations of nonsingular real anti-bicanonical curves on $\br \bff_4$}

We next introduce the notions of ``non-increasing simplest degenerations" (conjunctions, contractions) 
of {\bf nonsingular} real anti-bicanonical curves 
on $\br \bff_4$ 
as analogies of Section 3 (pp.284--285) of \cite{Itenberg92}. 

As stated in the previous subsection, 
a nonsingular curve in $|-2K_{\bff_4}|$ has two irreducible components 
$$s \ \mbox{and} \ A_1,$$
where $A_1$ is a nonsingular curve in 
$$|12c+3s|.$$        

\bigskip

Let $C_0$ be a real curve in $|12c+3s|$ on $\bff_4$ with one nondegenerate double point, 
and 
$$C_t \ \ (-\varepsilon < t < \varepsilon)$$
be a smoothing 
(, where every $C_t\ (t\neq 0)$ is a real nonsingular curve in $|12c+3s|$ on $\bff_4$, ) 
such that 
$$\sharp \{ \mbox{ovals of a nonsingular curve}\ C_{t_{-1}}\ \} \ \geq \ 
\sharp \{ \mbox{ovals of a nonsingular curve}\ C_{t_1}\ \}$$
for any $t_{-1} < 0$ and any $t_1 > 0$. 

We call such a family $C_t \ (t_{-1} \leq t \leq 0)$ 
a {\bf non-increasing simplest degeneration} of $C_{t_{-1}}$ to $C_0$. 

We do not know whether 
non-increasing simplest degenerations of $C_{t_{-1}}$ to $C_0$ are realizable 
for {\bf any pair} $C_{t_{-1}}$ and $C_0$. 

\medskip

We define $8$ kinds of non-increasing simplest degenerations. 

\begin{definition}[Conjunctions 1),\ 2),\ 1'),\ 2') and Contractions 3),\ 3')]\label{def-degenerations}
First we fix {\bf an isometry class} of integral involutions of type $(S,\theta) \cong (\bu,- \id)$ with 
$(r(\psi),a(\psi),\delta_\psi)\not=(10,8,0)$ and $\not=(10,10,0)$. 
Take a corresponding real $2$-elementary K3 surfaces 
$$(X,\tau,\varphi)$$
of type $(S,\theta) \cong (\bu,- \id)$. 
Let $\pi : X \to X/\tau = \bff_4$ be the quotient map. 
Then we get 
a real nonsingular curve $A=s+A_1$ in $|-2K_{\bff_4}|$ where 
$A_1$ is a real nonsingular curve in $|12c+3s|$ on $\br \bff_4$. 
Suppose that 
{\it 
the fixed point set $X_\varphi(\br)$ is homeomorphic to $\Sigma_g \cup kS^2$.
}
Then the region $\pi(X_\varphi(\br)) \ (\subset \br \bff_4)$ is 
the disjoint union of 
an annulus with $(g-1)$ holes  and   $k$ disks. 
The boundary of the annulus with $(g-1)$ holes 
consists of (see Figure \ref{Ugraph-picture}) 
{\it 
the non-contractible component of $\br A_1$, $\br s$, and the $(g-1)$ empty ovals.
}
\begin{itemize}
\item {\bf Conjunction 1)}\ 
The conjunction of the non-contractible component and one of the $(g-1)$ empty ovals. 
(Recall that the annulus with $(g-1)$ holes 
is covered by the fixed point set of 
the anti-holomorphic involution $\varphi$.)

\item {\bf Conjunction 1')}\ 
The conjunction of the non-contractible component and one of the $k$ empty ovals. 
(Remark that the region (annulus) surrounded by 
the non-contractible component, $\br s$ and the $k$ empty ovals 
is covered by the fixed point set of 
the related involution $\wvarphi$ of the anti-holomorphic involution $\varphi$.)

\item {\bf Conjunction 2)}\ 
The conjunction of two of the $(g-1)$ empty ovals. 

\item {\bf Conjunction 2')}\ 
The conjunction of two of the $k$ empty ovals. 

\item {\bf Contraction 3)}\ 
The contraction of one of the $(g-1)$ empty ovals. 

\item {\bf Contraction 3')}\ 
The contraction of one of the $k$ empty ovals. 
\end{itemize}
\end{definition}

\begin{definition}[Conjunctions 4),\ 4')]
Consider {\bf the isometry class} of integral involutions of type 
$(S,\theta) \cong (\bu,- \id)$ with $(r(\psi),a(\psi),\delta_\psi)$ $=$ $(9,9,1)$ or $(11,9,1)$. 
Remark that these two involutions are {\bf related}. 

If $(r(\psi),a(\psi),\delta_\psi)=(9,9,1)$, then we have 
$g=2,\ k=0$. 
Then the region $\pi(X_\varphi(\br)) \ (\subset \br \bff_4)$ is 
an annulus with one hole. 
The boundary of the annulus 
consists of 
{\it 
the non-contractible component of $\br A_1$, $\br s$, and the empty oval.  
}
\begin{itemize}
\item {\bf Conjunction 4)}\ 
The empty oval conjuncts with itself 
and becomes the union of two real lines (Node (*)) on $\br \bff_4$. 
\end{itemize}

If $(r(\psi),a(\psi),\delta_\psi)=(11,9,1)$, then we have 
$g=1,\ k=1$. 
Then the region $\pi(X_\varphi(\br)) \ (\subset \br \bff_4)$ is 
the disjoint union of 
an annulus  and   one disk. 
The boundary of the annulus 
consists of 
{\it 
the non-contractible component of $\br A_1$ and $\br s$.
}
\begin{itemize}
\item {\bf Conjunction 4')}\ 
The empty oval conjuncts with itself 
and becomes the union of two real lines (Node (*)) on $\br \bff_4$. 
\end{itemize}
\end{definition}

See Figure \ref{Degenerations}. 
Compare with Figure \ref{Ugraph-picture}.

\begin{figure}[!hb]
\begin{center}
\includegraphics[width=4cm]{node1-dege.eps}
\includegraphics[width=2.5cm]{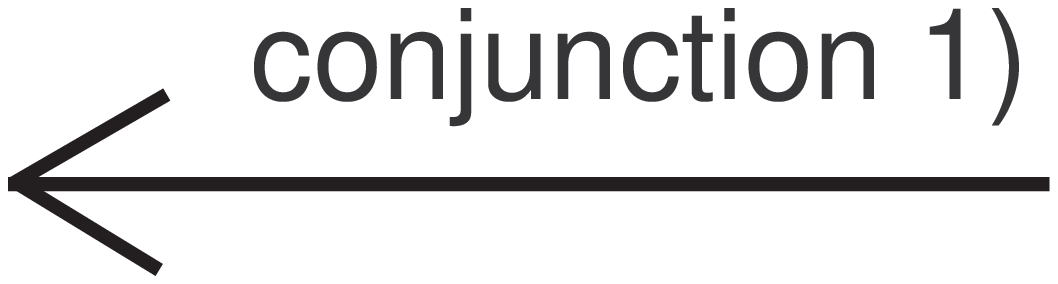}
\includegraphics[width=4cm]{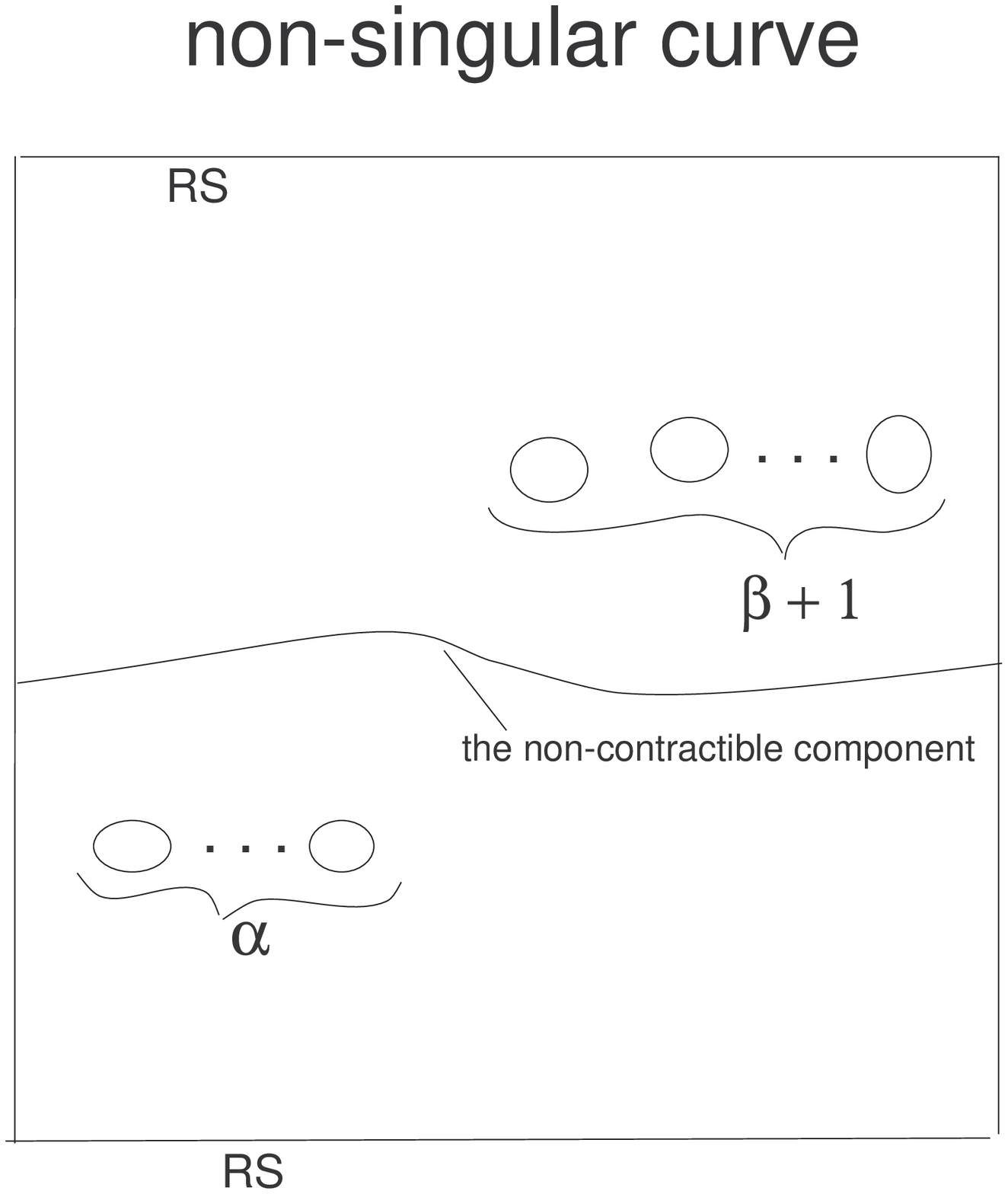}
\includegraphics[width=2.5cm]{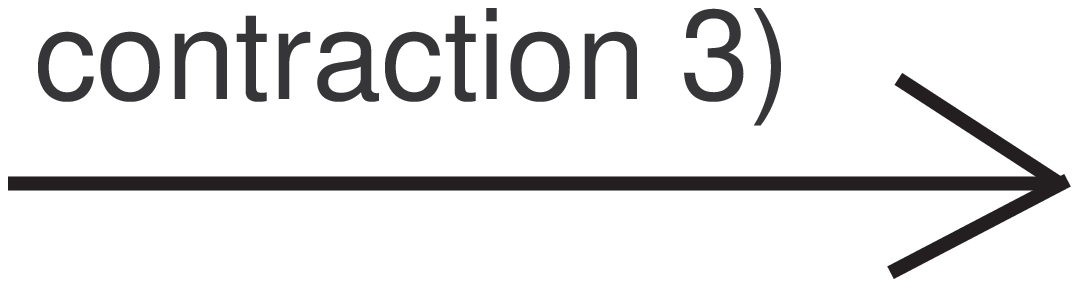}
\includegraphics[width=4cm]{isolated1.eps}

\vspace{1cm}

\includegraphics[width=4cm]{node2-dege.eps}
\includegraphics[width=2.5cm]{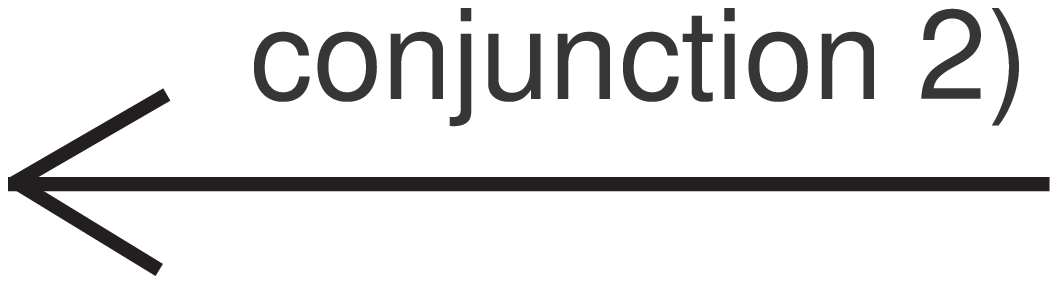}
\includegraphics[width=4cm]{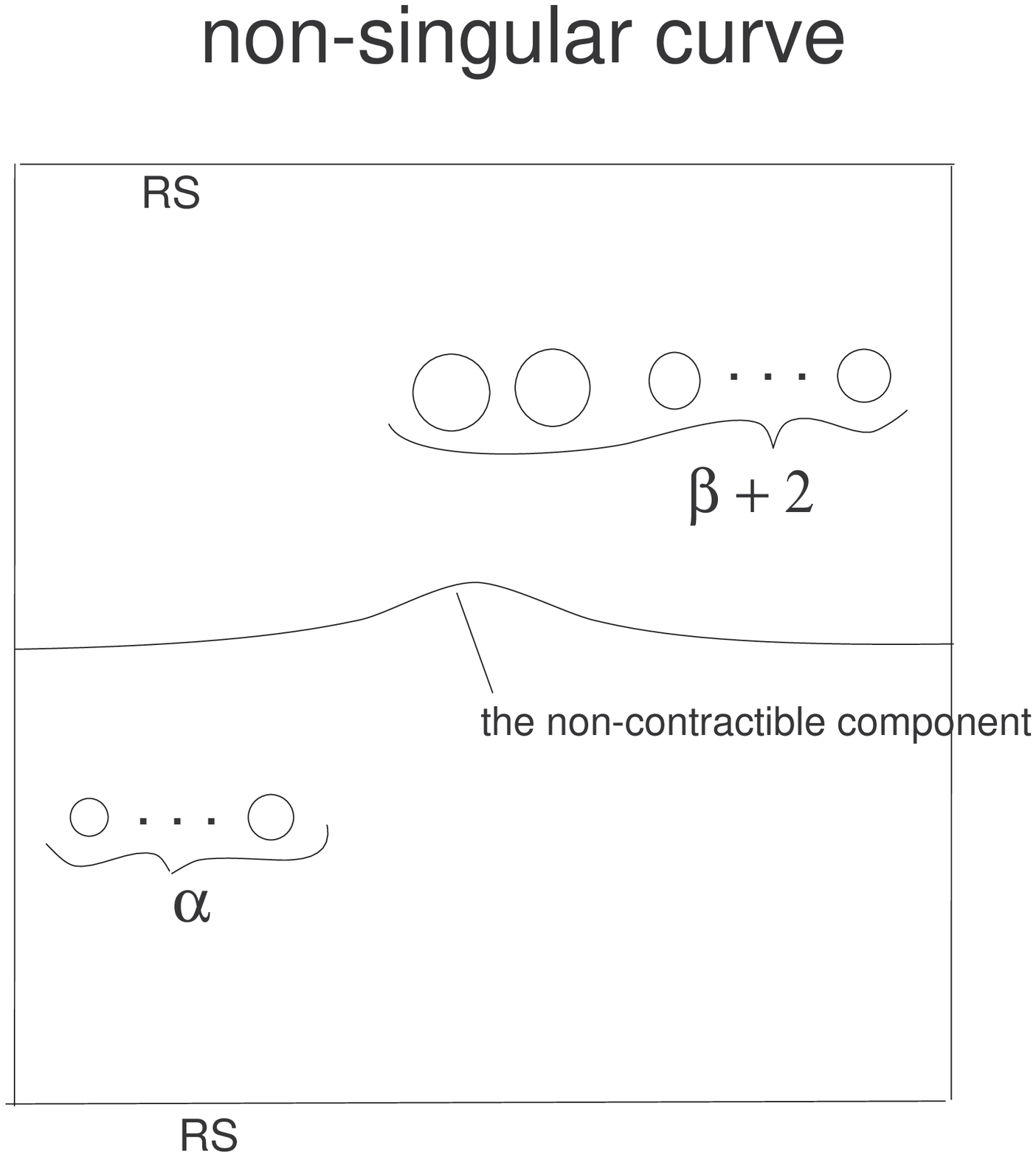}
\includegraphics[width=2.5cm]{contraction-right-3.eps}
\includegraphics[width=4cm]{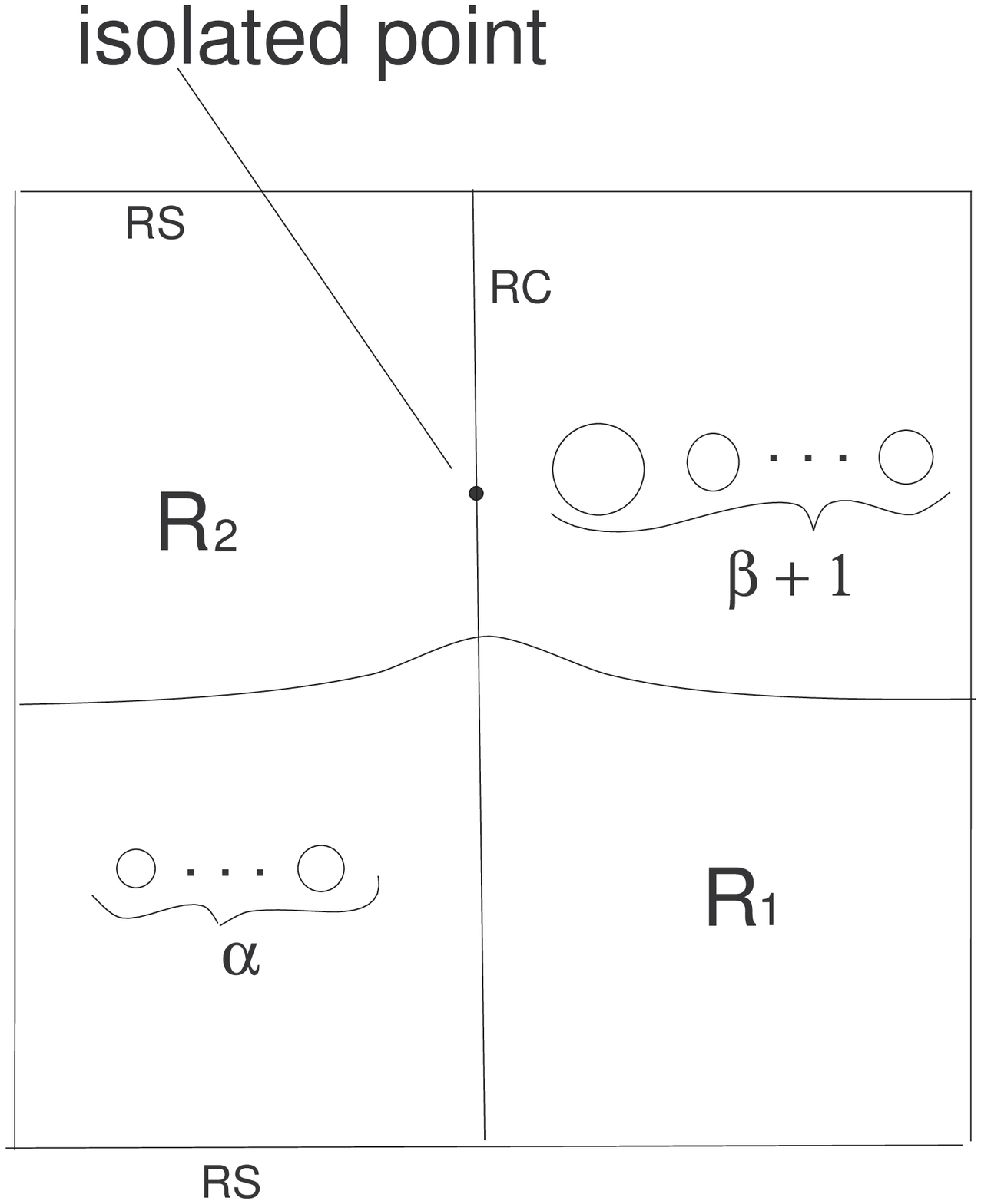}

\vspace{1cm}

\includegraphics[width=4cm]{node-ast-dege.eps}
\includegraphics[width=2.5cm]{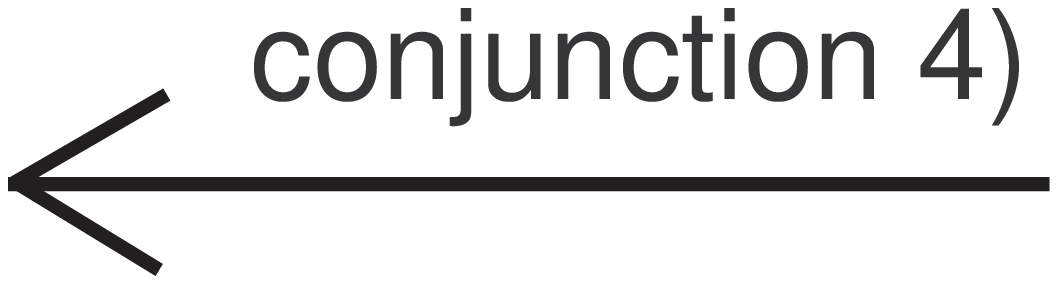}
\includegraphics[width=4cm]{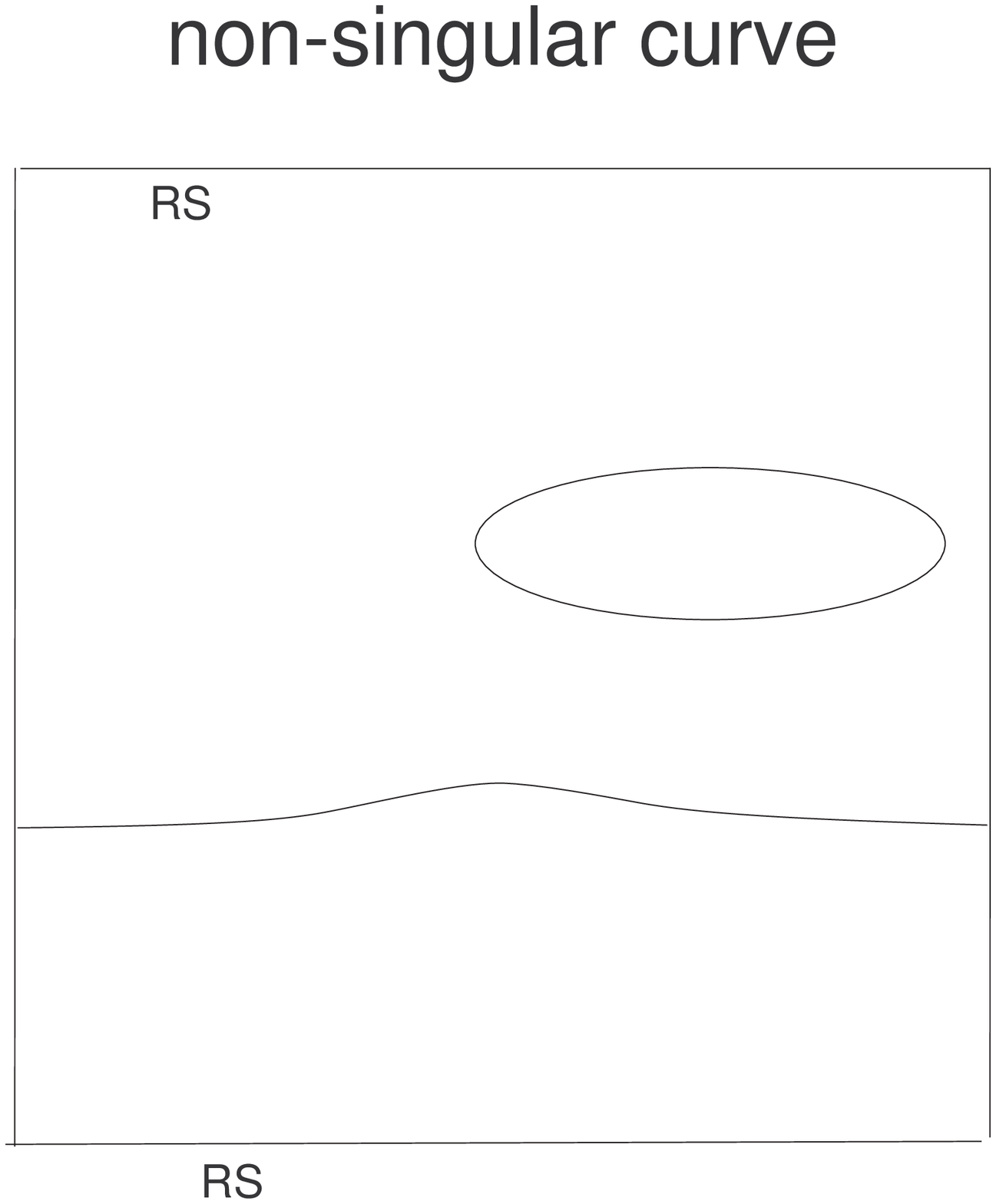}
\end{center}
\caption{Non-increasing simplest degenerations of nonsingular real anti-bicanonical curves on $\br \bff_4$.}
\label{Degenerations}
\end{figure}     

\clearpage


\setlength{\topmargin}{-0.6in}
\setlength{\textheight}{10in}

We list up candidates for possible non-increasing simplest degenerations 
for each of $63$ isometry classes of integral involutions of type $(S,\theta) \cong (\bu,- \id)$ 
with the invariant $(r(\psi),a(\psi),\delta_\psi)$. 
\begin{theorem} \label{degenerations-F4-re-arran}
We can enumerate up the following candidates: 
\begin{itemize}
\item For Conjunction 1)\ Conjunction 2)\ Contraction 3), see {\sc Table \ref{nonsing_F_4_a}}.
\item For Conjunction 1')\ Conjunction 2')\ Contraction 3'), see {\sc Table \ref{nonsing_F_4_b}}.
\item For Conjunction 4) and 4'), see {\sc Table \ref{nonsing_F_4_conj5and5'}}. $\Box$
\end{itemize}
\end{theorem}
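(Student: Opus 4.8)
The plan is to run through the $63$ isometry classes of integral involutions of type $(S,\theta)\cong(\bu,-\id)$, indexed by $(r(\psi),a(\psi),\delta_\psi)$, and for each one determine which of the eight non-increasing simplest degenerations of Definition \ref{def-degenerations} (together with Conjunctions 4) and 4')) can be carried out, and what the real isotopy type of the resulting singular curve is. Fix such a class with $(r(\psi),a(\psi),\delta_\psi)\neq(10,8,0),(10,10,0)$ and set $g=(22-r(\psi)-a(\psi))/2$, $k=(r(\psi)-a(\psi))/2$ as in the previous subsection. By Theorem~27 of \cite{NikulinSaito05} (see Figure \ref{Ugraph-picture}) the region $A_-=\pi(X_\varphi(\br))$ is the disjoint union of an annulus with $(g-1)$ holes and $k$ disks, so $X_\varphi(\br)\sim\Sigma_g\cup kS^2$; the $(g-1)$ inner ovals bound holes of the annulus covered by $X_\varphi(\br)$ (the ``$\varphi$-side''), while the $k$ ovals bounding the disks lie on the ``$\wvarphi$-side'', the annulus swept out by $X_{\wvarphi}(\br)$. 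Hence Conjunction~1) and Contraction~3) are available precisely when $g-1\geq1$, Conjunction~2) when $g-1\geq2$, Conjunction~1') and Contraction~3') when $k\geq1$, and Conjunction~2') when $k\geq2$; the class $(10,10,0)$ has $\br\bff_4=\emptyset$ and admits none of them, and $(10,8,0)$ (the special region on the right of Figure \ref{Ugraph-picture}) and the two classes $(9,9,1)$, $(11,9,1)$ --- for which the single empty oval may self-conjunct into Node~(*) via Conjunction~4) resp.~4') --- are handled in their own rows.

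Next I would identify the singular curve produced by each operation. Conjuncting the non-contractible component with a $\varphi$-side oval gives a curve whose only singular component is a non-contractible nodal loop through $P_0$ and $P_1$, so by Proposition \ref{allcases_F_4} it is of type Node~(1); the $(g-2)$ surviving $\varphi$-side ovals lie in the region $R_1$ adjacent to the node and the $k$ ovals of the $\wvarphi$-side lie in $R_2$, which pins down the pair $(\alpha,\beta)$. Likewise Conjunction~2) (resp.~its primed variant) creates a small contractible nodal loop, i.e.\ Node~(2); Contraction~3) (resp.~3')) makes an empty oval collapse to a solitary real point, i.e.\ the Isolated point case; and Conjunctions~4) and 4') turn the single empty oval into two non-contractible branches meeting at a node, i.e.\ Node~(*). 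In every case a conjunction fuses two components into one and a contraction deletes one, so the number of ovals does not increase --- the degeneration is non-increasing --- and only the double point $P_0$ appears, so it is simplest; no cusp can arise because ``simplest'' means a generic (nodal) tangency. Reading off $(\alpha,\beta)$, or the entry ``Node~(*)'', from this combinatorial bookkeeping for each of the $63$ classes and pairing it with $(r(\psi),a(\psi),\delta_\psi)$ fills Tables \ref{nonsing_F_4_a}, \ref{nonsing_F_4_b} and \ref{nonsing_F_4_conj5and5'}.

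The main obstacle is purely organisational rather than conceptual. One must keep track, class by class, of which side ($\varphi$ or $\wvarphi$) each oval belongs to; of how the conventions labelling $R_1$ and $R_2$ in the Node~(2), Cusp~(2) and Isolated point cases interact with that labelling; of the extreme values of $r(\psi)$ and $a(\psi)$ at which $g-1$ or $k$ drops to $0$ and several of the eight operations become unavailable; and of the related pairs of involutions, so that a degeneration for $\psi$ and the corresponding one for its related involution $\sigma\circ\psi$ are recorded consistently. Carrying this case analysis through all $63$ classes is precisely what produces the three tables of the statement.
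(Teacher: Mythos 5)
Your overall strategy coincides with the paper's: the theorem is established by exactly the case-by-case bookkeeping you describe, running over the isometry classes of type $(\bu,-\id)$, reading off $g$ and $k$, deciding which of the operations of Definition \ref{def-degenerations} are available ($g-1\ge 1$ for 1) and 3), $g-1\ge 2$ for 2), $k\ge 1$ and $k\ge 2$ for the primed versions, the special classes $(9,9,1)$ and $(11,9,1)$ for 4) and 4')), and recording the type and the pair $(\alpha,\beta)$ of the resulting singular curve. Your availability conditions, your identification of the resulting types (Node (1), Node (2), Isolated point, Node (*)), and your treatment of the exceptional classes all agree with the tables.

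However, your assignment of the surviving ovals to the regions $R_1$ and $R_2$ is reversed, and this would swap $\alpha$ and $\beta$ in every entry of the tables. In the Node (1) case, $R_1$ is by definition the non-contractible region that becomes \emph{connected with the interior of the node in the blow up} of $\br \bff_4$ at $P_0$. The real blow-up glues in a M\"obius band and identifies \emph{opposite} sectors at $P_0$; the sector filling the interior of the small loop is opposite to the single sector lying on the far side of the non-contractible component, so $R_1$ is the region on the side \emph{away} from the loop. Since Conjunction 1) absorbs one of the $(g-1)$ ovals bounding the holes of $A_-$, the loop it creates sits on the $\varphi$-side; hence the $k$ ovals of the $\wvarphi$-side lie in $R_1$ and the $g-2$ surviving $\varphi$-side ovals lie in $R_2$, giving $(\alpha,\beta)=(k,\,g-2)$ rather than $(g-2,\,k)$ as you claim. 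The same flip occurs for Conjunction 2) and Contraction 3), where $R_1$ is defined as the region \emph{not} containing the contractible nodal component, respectively the isolated point, both of which arise on the $\varphi$-side. You can check this against row 1 of Table \ref{nonsing_F_4_a}: the class $(1,1,1)$ has $(g,k)=(10,0)$ and the table records $(\alpha,\beta)=(0,8)$ for Conjunction 1), whereas your rule gives $(8,0)$; equivalently, via $r(\psi)=9+\alpha-\beta$ and $a(\psi)=9-\alpha-\beta$, your values would place the degenerate curve in the isometry class with $(r(\psi),a(\psi))=(17,1)$ of Table \ref{3-1-1-delta_F_1} instead of $(1,1)$, destroying the row-by-row correspondence of Remark \ref{interesting-correspondence} that the enumeration is designed to exhibit.
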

\begin{table}[t]
\begin{center}
{\tiny 
\begin{tabular}{|r|r|c||r|r|r||c|c|c||r|}
\cline{1-9}
\multicolumn{3}{|l||}{isometry class} & \multicolumn{3}{c||}{} & conjunction 1)  & conjunction 2)  & contraction 3)  \\
\multicolumn{3}{|c||}{of type $(\bu,- \id)$} & \multicolumn{3}{c||}{} &               &                 &                 \\ \cline{7-9}
\multicolumn{3}{|c||}{(nonsingular curve)} & \multicolumn{3}{c||}{} & \ \ $\to$ Node (1) & \ \ $\to$ Node (2) & \ \ $\to$ Isolated point \\ \hline
 $r(\psi)$ & $a(\psi)$ & $\delta_\psi$             & $g$ & $k$ & $g-1$      &  $\alpha,\beta$ & $\alpha,\beta$ & $\alpha,\beta$    &   No.              \\
\hline
 1 & 1 & 1 &10 & 0 & 9 & 0,\ 8 & 0,\ 7 & 0,\ 8 &1\\ \hline
 2 & 0 & 0 &10 & 1 & 9 & 1,\ 8 & 1,\ 7 & 1,\ 8 &2\\ \hline
 2 & 2 & 0 & 9 & 0 & 8 & 0,\ 7 & 0,\ 6 & 0,\ 7 &3\\ \hline
 2 & 2 & 1 & 9 & 0 & 8 & 0,\ 7 & 0,\ 6 & 0,\ 7 &4\\ \hline
 3 & 1 & 1 & 9 & 1 & 8 & 1,\ 7 & 1,\ 6 & 1,\ 7 &5\\ \hline
 3 & 3 & 1 & 8 & 0 & 7 & 0,\ 6 & 0,\ 5 & 0,\ 6 &6\\ \hline
 4 & 2 & 1 & 8 & 1 & 7 & 1,\ 6 & 1,\ 5 & 1,\ 6 &7\\ \hline
 4 & 4 & 1 & 7 & 0 & 6 & 0,\ 5 & 0,\ 4 & 0,\ 5 &8\\ \hline
 5 & 3 & 1 & 7 & 1 & 6 & 1,\ 5 & 1,\ 4 & 1,\ 5 &9\\ \hline
 5 & 5 & 1 & 6 & 0 & 5 & 0,\ 4 & 0,\ 3 & 0,\ 4 &10\\ \hline
 6 & 2 & 0 & 7 & 2 & 6 & 2,\ 5 & 2,\ 4 & 2,\ 5 &11\\ \hline
 6 & 4 & 0 & 6 & 1 & 5 & 1,\ 4 & 1,\ 3 & 1,\ 4 &12\\ \hline
 6 & 4 & 1 & 6 & 1 & 5 & 1,\ 4 & 1,\ 3 & 1,\ 4 &13\\ \hline
 6 & 6 & 1 & 5 & 0 & 4 & 0,\ 3 & 0,\ 2 & 0,\ 3 &14\\ \hline
 7 & 3 & 1 & 6 & 2 & 5 & 2,\ 4 & 2,\ 3 & 2,\ 4 &15\\ \hline
 7 & 5 & 1 & 5 & 1 & 4 & 1,\ 3 & 1,\ 2 & 1,\ 3 &16\\ \hline
 7 & 7 & 1 & 4 & 0 & 3 & 0,\ 2 & 0,\ 1 & 0,\ 2 &17\\ \hline
 8 & 2 & 1 & 6 & 3 & 5 & 3,\ 4 & 3,\ 3 & 3,\ 4 &18\\ \hline
 8 & 4 & 1 & 5 & 2 & 4 & 2,\ 3 & 2,\ 2 & 2,\ 3 &19\\ \hline
 8 & 6 & 1 & 4 & 1 & 3 & 1,\ 2 & 1,\ 1 & 1,\ 2 &20\\ \hline
 8 & 8 & 1 & 3 & 0 & 2 & 0,\ 1 & 0,\ 0 & 0,\ 1 &21\\ \hline
 9 & 1 & 1 & 6 & 4 & 5 & 4,\ 4 & 4,\ 3 & 4,\ 4 &22\\ \hline
 9 & 3 & 1 & 5 & 3 & 4 & 3,\ 3 & 3,\ 2 & 3,\ 3 &23\\ \hline
 9 & 5 & 1 & 4 & 2 & 3 & 2,\ 2 & 2,\ 1 & 2,\ 2 &24\\ \hline
 9 & 7 & 1 & 3 & 1 & 2 & 1,\ 1 & 1,\ 0 & 1,\ 1 &25\\ \hline
 9 & 9 & 1 & 2 & 0 & 1 & 0,\ 0 & impossible & 0,\ 0 &26\\ \hline
10 & 0 & 0 & 6 & 5 & 5 & 5,\ 4 & 5,\ 3      & 5,\ 4 &27\\ \hline
10 & 2 & 0 & 5 & 4 & 4 & 4,\ 3 & 4,\ 2      & 4,\ 3 &28\\ \hline
10 & 2 & 1 & 5 & 4 & 4 & 4,\ 3 & 4,\ 2      & 4,\ 3 &29\\ \hline
10 & 4 & 0 & 4 & 3 & 3 & 3,\ 2 & 3,\ 1      & 3,\ 2 &30\\ \hline
10 & 4 & 1 & 4 & 3 & 3 & 3,\ 2 & 3,\ 1      & 3,\ 2 &31\\ \hline
10 & 6 & 0 & 3 & 2 & 2 & 2,\ 1 & 2,\ 0      & 2,\ 1 &32\\ \hline
10 & 6 & 1 & 3 & 2 & 2 & 2,\ 1 & 2,\ 0      & 2,\ 1 &33\\ \hline
10 & 8 & 1 & 2 & 1 & 1 & 1,\ 0 & impossible & 1,\ 0 &34\\ \hline
11 & 1 & 1 & 5 & 5 & 4 & 5,\ 3 & 5,\ 2      & 5,\ 3 &35\\ \hline
11 & 3 & 1 & 4 & 4 & 3 & 4,\ 2 & 4,\ 1      & 4,\ 2 &36\\ \hline
11 & 5 & 1 & 3 & 3 & 2 & 3,\ 1 & 3,\ 0      & 3,\ 1 &37\\ \hline
11 & 7 & 1 & 2 & 2 & 1 & 2,\ 0 & impossible & 2,\ 0 &38\\ \hline
12 & 2 & 1 & 4 & 5 & 3 & 5,\ 2 & 5,\ 1      & 5,\ 2 &39\\ \hline
12 & 4 & 1 & 3 & 4 & 2 & 4,\ 1 & 4,\ 0      & 4,\ 1 &40\\ \hline
12 & 6 & 1 & 2 & 3 & 1 & 3,\ 0 & impossible & 3,\ 0 &41\\ \hline
13 & 3 & 1 & 3 & 5 & 2 & 5,\ 1 & 5,\ 0      & 5,\ 1 &42\\ \hline
13 & 5 & 1 & 2 & 4 & 1 & 4,\ 0 & impossible & 4,\ 0 &43\\ \hline
14 & 2 & 0 & 3 & 6 & 2 & 6,\ 1 & 6,\ 0      & 6,\ 1 &44\\ \hline
14 & 4 & 0 & 2 & 5 & 1 & 5,\ 0 & impossible & 5,\ 0 &45\\ \hline
14 & 4 & 1 & 2 & 5 & 1 & 5,\ 0 & impossible & 5,\ 0 &46\\ \hline
15 & 3 & 1 & 2 & 6 & 1 & 6,\ 0 & impossible & 6,\ 0 &47\\ \hline
16 & 2 & 1 & 2 & 7 & 1 & 7,\ 0 & impossible & 7,\ 0 &48\\ \hline
17 & 1 & 1 & 2 & 8 & 1 & 8,\ 0 & impossible & 8,\ 0 &49\\ \hline
18 & 0 & 0 & 2 & 9 & 1 & 9,\ 0 & impossible & 9,\ 0 &50\\ \hline
\end{tabular}
} 
\end{center}
\caption{Conjunction 1), Conjunction 2) and Contraction 3)}
\label{nonsing_F_4_a}
\end{table}
\begin{table}[h]
\begin{center}
{\tiny 
\begin{tabular}{|r|r|c||r|r|r||c|c|c||r|}
\cline{1-9}
\multicolumn{3}{|l||}{isometry class} & \multicolumn{3}{c||}{} & conjunction 1')  & conjunction 2')  & contraction 3')  \\
\multicolumn{3}{|c||}{of type $(\bu,- \id)$} & \multicolumn{3}{c||}{} &               &                 &                 \\ \cline{7-9}
\multicolumn{3}{|c||}{(nonsingular curve)} & \multicolumn{3}{c||}{} & \ \ $\to$ Node (1) & \ \ $\to$ Node (2) & \ \ $\to$ Isolated point \\ \hline
 $r(\psi)$ & $a(\psi)$ & $\delta_\psi$             & $g$ & $k$ & $g-1$      &   $\alpha,\beta$ & $\alpha,\beta$ & $\alpha,\beta$   &   No.                   \\
\hline
19 & 1 & 1 & 1 & 9 & 0 & 0,\ 8 & 0,\ 7 & 0,\ 8 &1'\\ \hline
18 & 0 & 0 & 2 & 9 & 1 & 1,\ 8 & 1,\ 7 & 1,\ 8 &2'\\ \hline
18 & 2 & 0 & 1 & 8 & 0 & 0,\ 7 & 0,\ 6 & 0,\ 7 &3'\\ \hline
18 & 2 & 1 & 1 & 8 & 0 & 0,\ 7 & 0,\ 6 & 0,\ 7 &4'\\ \hline
17 & 1 & 1 & 2 & 8 & 1 & 1,\ 7 & 1,\ 6 & 1,\ 7 &5'\\ \hline
17 & 3 & 1 & 1 & 7 & 0 & 0,\ 6 & 0,\ 5 & 0,\ 6 &6'\\ \hline
16 & 2 & 1 & 2 & 7 & 1 & 1,\ 6 & 1,\ 5 & 1,\ 6 &7'\\ \hline
16 & 4 & 1 & 1 & 6 & 0 & 0,\ 5 & 0,\ 4 & 0,\ 5 &8'\\ \hline
15 & 3 & 1 & 2 & 6 & 1 & 1,\ 5 & 1,\ 4 & 1,\ 5 &9'\\ \hline
15 & 5 & 1 & 1 & 5 & 0 & 0,\ 4 & 0,\ 3 & 0,\ 4 &10'\\ \hline
14 & 2 & 0 & 3 & 6 & 2 & 2,\ 5 & 2,\ 4 & 2,\ 5 &11'\\ \hline
14 & 4 & 0 & 2 & 5 & 1 & 1,\ 4 & 1,\ 3 & 1,\ 4 &12'\\ \hline
14 & 4 & 1 & 2 & 5 & 1 & 1,\ 4 & 1,\ 3 & 1,\ 4 &13'\\ \hline
14 & 6 & 1 & 1 & 4 & 0 & 0,\ 3 & 0,\ 2 & 0,\ 3 &14'\\ \hline
13 & 3 & 1 & 3 & 5 & 2 & 2,\ 4 & 2,\ 3 & 2,\ 4 &15'\\ \hline
13 & 5 & 1 & 2 & 4 & 1 & 1,\ 3 & 1,\ 2 & 1,\ 2 &16'\\ \hline
13 & 7 & 1 & 1 & 3 & 0 & 0,\ 2 & 0,\ 1 & 0,\ 2 &17'\\ \hline
12 & 2 & 1 & 4 & 5 & 3 & 3,\ 4 & 3,\ 3 & 3,\ 4 &18'\\ \hline
12 & 4 & 1 & 3 & 4 & 2 & 2,\ 3 & 2,\ 2 & 2,\ 3 &19'\\ \hline
12 & 6 & 1 & 2 & 3 & 1 & 1,\ 2 & 1,\ 1 & 1,\ 2 &20'\\ \hline
12 & 8 & 1 & 1 & 2 & 0 & 0,\ 1 & 0,\ 0 & 0,\ 1 &21'\\ \hline
11 & 1 & 1 & 5 & 5 & 4 & 4,\ 4 & 4,\ 3 & 4,\ 4 &22'\\ \hline
11 & 3 & 1 & 4 & 4 & 3 & 3,\ 3 & 3,\ 2 & 3,\ 3 &23'\\ \hline
11 & 5 & 1 & 3 & 3 & 2 & 2,\ 2 & 2,\ 1 & 2,\ 2 &24'\\ \hline
11 & 7 & 1 & 2 & 2 & 1 & 1,\ 1 & 1,\ 0 & 1,\ 1 &25'\\ \hline
11 & 9 & 1 & 1 & 1 & 0 & 0,\ 0 & impossible & 0,\ 0 &26'\\ \hline
10 & 0 & 0 & 6 & 5 & 5 & 5,\ 4 & 5,\ 3      & 5,\ 4 &27'\\ \hline
10 & 2 & 0 & 5 & 4 & 4 & 4,\ 3 & 4,\ 2      & 4,\ 3 &28'\\ \hline
10 & 2 & 1 & 5 & 4 & 4 & 4,\ 3 & 4,\ 2      & 4,\ 3 &29'\\ \hline
10 & 4 & 0 & 4 & 3 & 3 & 3,\ 2 & 3,\ 1      & 3,\ 2 &30'\\ \hline
10 & 4 & 1 & 4 & 3 & 3 & 3,\ 2 & 3,\ 1      & 3,\ 2 &31'\\ \hline
10 & 6 & 0 & 3 & 2 & 2 & 2,\ 1 & 2,\ 0      & 2,\ 1 &32'\\ \hline
10 & 6 & 1 & 3 & 2 & 2 & 2,\ 1 & 2,\ 0      & 2,\ 1 &33'\\ \hline
10 & 8 & 1 & 2 & 1 & 1 & 1,\ 0 & impossible & 1,\ 0 &34'\\ \hline
 9 & 1 & 1 & 6 & 4 & 5 & 5,\ 3 & 5,\ 2      & 5,\ 3 &35'\\ \hline
 9 & 3 & 1 & 5 & 3 & 4 & 4,\ 2 & 4,\ 1      & 4,\ 2 &36'\\ \hline
 9 & 5 & 1 & 4 & 2 & 3 & 3,\ 1 & 3,\ 0      & 3,\ 1 &37'\\ \hline
 9 & 7 & 1 & 3 & 1 & 2 & 2,\ 0 & impossible & 2,\ 0 &38'\\ \hline
 8 & 2 & 1 & 6 & 3 & 5 & 5,\ 2 & 5,\ 1      & 5,\ 2 &39'\\ \hline
 8 & 4 & 1 & 5 & 2 & 4 & 4,\ 1 & 4,\ 0      & 4,\ 1 &40'\\ \hline
 8 & 6 & 1 & 4 & 1 & 3 & 3,\ 0 & impossible & 3,\ 0 &41'\\ \hline
 7 & 3 & 1 & 6 & 2 & 5 & 5,\ 1 & 5,\ 0      & 5,\ 1 &42'\\ \hline
 7 & 5 & 1 & 5 & 1 & 4 & 4,\ 0 & impossible & 4,\ 0 &43'\\ \hline
 6 & 2 & 0 & 7 & 2 & 6 & 6,\ 1 & 6,\ 0      & 6,\ 1 &44'\\ \hline
 6 & 4 & 0 & 6 & 1 & 5 & 5,\ 0 & impossible & 5,\ 0 &45'\\ \hline
 6 & 4 & 1 & 6 & 1 & 5 & 5,\ 0 & impossible & 5,\ 0 &46'\\ \hline
 5 & 3 & 1 & 7 & 1 & 6 & 6,\ 0 & impossible & 6,\ 0 &47'\\ \hline
 4 & 2 & 1 & 8 & 1 & 7 & 7,\ 0 & impossible & 7,\ 0 &48'\\ \hline
 3 & 1 & 1 & 9 & 1 & 8 & 8,\ 0 & impossible & 8,\ 0 &49'\\ \hline
 2 & 0 & 0 &10 & 1 & 9 & 9,\ 0 & impossible & 9,\ 0 &50'\\ \hline
\end{tabular}
} 
\end{center}
\caption{Conjunction 1'), Conjunction 2') and Contraction 3')}
\label{nonsing_F_4_b}
\end{table}

\begin{table}[!h]
{\tiny 
\begin{tabular}{|r|r|c||r|r|r||c|}
\hline
\multicolumn{3}{|l||}{isometry class} & \multicolumn{3}{c||}{} &   \\
\multicolumn{3}{|c||}{of type $(\bu,- \id)$} & \multicolumn{3}{c||}{} & conjunction 4) \\
\multicolumn{3}{|c||}{(nonsingular curve)} & \multicolumn{3}{c||}{} & \\
\cline{1-6}
 $r(\psi)$ & $a(\psi)$ & $\delta_\psi$ & $g$ & $k$ & $g-1$ & \multicolumn{1}{c|}{}   \\ \hline
 9 & 9 & 1 & 2 & 0 & 1 & Node (*) \\ \hline
\end{tabular}
\ \ \ \ 
\begin{tabular}{|r|r|c||r|r|r||c|}
\hline
\multicolumn{3}{|l||}{isometry class} & \multicolumn{3}{c||}{} &   \\
\multicolumn{3}{|c||}{of type $(\bu,- \id)$} & \multicolumn{3}{c||}{} & conjunction 4') \\
\multicolumn{3}{|c||}{(nonsingular curve)} & \multicolumn{3}{c||}{} & \\
\cline{1-6}
 $r(\psi)$ & $a(\psi)$ & $\delta_\psi$ & $g$ & $k$ & $g-1$ & \multicolumn{1}{c|}{}   \\ \hline
11 & 9 & 1 & 1 & 1 & 0 & Node (*) \\ \hline  
\end{tabular}
} 
\caption{Conjunctions 4) and 4')}
\label{nonsing_F_4_conj5and5'}
\end{table}

\clearpage

\setlength{\topmargin}{-0.4in}
\setlength{\textheight}{9.5in}

\begin{remark} \label{interesting-correspondence}
We observe an obvious interesting correspondence between 
the data of Theorem \ref{isotopy-F4-double} ({\sc Tables} \ref{3-1-1-delta_F_1} --- \ref{3-1-1-delta_F_0}) 
and 
those of Theorem \ref{degenerations-F4-re-arran} ({\sc Tables} \ref{nonsing_F_4_a} --- \ref{nonsing_F_4_conj5and5'}). 

If we prove the existence of all the non-increasing simplest degenerations listed in 
{\sc Tables} \ref{nonsing_F_4_a}, \ref{nonsing_F_4_b}, and \ref{nonsing_F_4_conj5and5'}, 
then 
we can say that 
every real isotopy type with an anti-holomorphic involution in Theorem \ref{isotopy-F4-double} 
except 
Node (1) with $(\alpha,\beta)=(1,0)$ and Isolated point with $(\alpha,\beta)=(1,0)$ 
in the isometry class $(9,9,0,H(\psi) \cong \bz/2\bz)$ 
can be obtain 
by certain non-increasing simplest degeneration of a real nonsingular curve in $|12c+3s|$ on $\bff_4$.
Recall Remark \ref{realizability1}. 
\end{remark}

To state the following lemma \ref{degene-from-nonsing}, 
we distinguish the two anti-holomorphic involutions $\varphi_{\pm}$ on $(X,\tau)$ of type $(S,\theta) \cong ((3,1,1),- \id)$ as follows. 
%
\begin{definition}[Definition of $\varphi_{\pm}$] \label{varphi+-}
We define the anti-holomorphic involutions $\varphi_{\pm}$ on a $2$-elementary K3 surface $(X,\tau)$ of type $(S,\theta) \cong ((3,1,1),- \id)$ 
such that 
$$\pi(X_{\varphi_{\pm}}(\br)) = A_\pm$$
respectively. Here recall Definition \ref{A_+A_-} of the two regions $A_\pm$. 
\end{definition}

Thus, if $H(\psi) = 0$ for a real $2$-elementary K3 surface $(X,\tau,\varphi)$ of type $(S,\theta) \cong ((3,1,1),- \id)$, 
then we set $A_- = \pi(X_\varphi(\br))$, and 
$$\varphi_- := \varphi \ \ \mbox{and} \ \ \varphi_+ := \wvarphi = \tau \circ \varphi.$$

Comparing the data in Theorem \ref{isotopy-F4-double} 
and Theorem \ref{degenerations-F4-re-arran}, 
we have:
\begin{lemma} \label{degene-from-nonsing}
Fix an {\bf isometry class} of integral involutions of $\bl_{K3}$ of type $(S,\theta) \cong (\bu,- \id)$ 
with $(r(\psi),a(\psi),$ $\delta_\psi)$ $\not=$ $(10,8,0),$ $(10,10,0)$, 
and 
take a corresponding real $2$-elementary K3 surface of type $(S,\theta) \cong (\bu,- \id)$ 
and 
a real {\bf nonsingular} curve $A = s + A_1$ in $|-2K_{\bff_4}|$ where 
$A_1$ is a real nonsingular curve in $|12c+3s|$ on $\br \bff_4$. 

Then we have the following:
\begin{itemize}
\item 
Take real curves $A^\prime_1$ with {\bf one nondegenerate double point} on $\bff_4$ 
from the degenerations of types 1)---3) of the real {\bf nonsingular} curve $A_1$. 
{\bf Choose} real $2$-elementary K3 surfaces 
$(X,\tau,\varphi_-)$ of type $(S,\theta) \cong ((3,1,1),- \id)$.
(See Definition \ref{varphi+-} and also Remark \ref{choose-F-4} below.)

Then, for all such marked real $2$-elementary K3 surfaces 
$((X,\tau,\varphi_-),\ \alpha)$ obtained from $A^\prime_1$, 
their associated integral involutions 
$\alpha \circ (\varphi_-)_* \circ \alpha^{-1}$ of $\bl_{K3}$ 
are {\bf isometric} with respect to $G = \{ \id \}$.

\item 
Take real curves $A^\prime_1$ with {\bf one nondegenerate double point} on $\bff_4$ 
from the degenerations of types 1')---3') of the real {\bf nonsingular} curve $A_1$. 
{\bf Choose} real $2$-elementary K3 surfaces 
$(X,\tau,\varphi_-)$ of type $(S,\theta) \cong ((3,1,1),- \id)$.
(See Definition \ref{varphi+-} and also Remark \ref{choose-F-4} below.)

Then, for all such marked real $2$-elementary K3 surfaces 
$((X,\tau,\varphi_-),\ \alpha)$ obtained from $A^\prime_1$, 
their associated integral involutions 
$\alpha \circ (\varphi_-)_* \circ \alpha^{-1}$ of $\bl_{K3}$ 
are {\bf isometric} 
with respect to $G = \{ \id \}$. $\Box$
\end{itemize}
\end{lemma}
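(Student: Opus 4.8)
The strategy is to track the arithmetic invariants of the associated integral involution through the degeneration and use the fact (Theorem \ref{moduli311}) that, for type $(S,\theta)\cong((3,1,1),-\id)$, an isometry class with respect to $G=\{\id\}$ is \emph{completely} determined by the data $(r(\psi),a(\psi),\delta_{\psi S},H(\psi))$. So the entire content of the Lemma reduces to showing that any $A'_1$ produced by a degeneration of a fixed type (one of 1)--3), or one of 1')--3')) starting from a fixed nonsingular $A_1$ yields the \emph{same} tuple $(r,a,\delta_{\varphi S},H)$, independently of which empty oval is conjuncted/contracted and independently of the chosen smoothing. The key observation is that all of these numbers are read off from the topology of $X_{\varphi_-}(\br)$ together with the value $\delta_{\varphi S}$, and the topology of $X_{\varphi_-}(\br)$ after the degeneration is determined purely combinatorially by $(g,k)$ of the original nonsingular curve and by which \emph{kind} of degeneration was performed --- not by the individual oval chosen.

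\textbf{Step 1: fix the starting data.} Begin with the chosen real $2$-elementary K3 surface of type $(\bu,-\id)$ with invariants $(r(\psi),a(\psi),\delta_\psi)$, so that (by the review subsection) $X_{\varphi_-}(\br)\sim\Sigma_g\cup kS^2$ with $g=(22-r-a)/2$, $k=(r-a)/2$, and the region $A_-$ is an annulus with $(g-1)$ holes together with $k$ disks, as in Figure \ref{Ugraph-picture}. All ovals in the $\Sigma_g$-part are interchangeable by a real isotopy of $\br\bff_4$ preserving the curve; likewise all ovals in the $k$ disks. This isotopy lifts to the K3 surface and acts on $\bl_{K3}$ by an isometry commuting with the data, which already shows that the particular oval chosen in a degeneration of a given type is irrelevant.

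\textbf{Step 2: compute the resulting topology, hence $(r,a)$.} For each of the six degeneration types I would read off, from Figures \ref{Degenerations} and \ref{Case1-1_A+A-}--\ref{Case1-2-2_A+A-}, what $X_{\varphi_-}(\br)$ becomes. A conjunction of the non-contractible component with an empty oval in the $\Sigma_g$ part (Conjunction 1)) produces exactly the Node (1) picture, whose $A_-$ topology was computed in subsection ``Topology of the real parts\ldots'' to give $X_{\varphi_-}(\br)\sim\Sigma_{2+\beta}\cup\alpha S^2$ with $(\alpha,\beta)=(g-1-1,\, k)$ --- wait: more precisely the tables show $\alpha=g-2$ is \emph{not} what appears; rather the matching is dictated by the identity $\alpha+\beta=9-a(\psi)$ and by $r(\psi)=9+\alpha-\beta$ coming from the Node-case formulas, so I would simply invoke those formulas together with the combinatorial count of ovals in $R_1,R_2$ determined by the Figure. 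The point is that in each case $(\alpha,\beta)$ is a fixed function of $(g,k)$, hence $(r,a)$ of the new involution $\alpha\circ(\varphi_-)_*\circ\alpha^{-1}$ is determined; comparing Tables \ref{nonsing_F_4_a}--\ref{nonsing_F_4_b} against Tables \ref{3-1-1-delta_F_1}--\ref{3-1-1-delta_F_0} verifies the bookkeeping. One must also check $H(\varphi)=0$: this holds because $\varphi_-$ is defined by $A_-=\pi(X_{\varphi_-}(\br))$, and by Definition \ref{A_+A_-} that is exactly the $H(\psi)=0$ convention.

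\textbf{Step 3: the invariant $\delta_{\varphi S}$.} The topological type of $X_{\varphi_-}(\br)$ determines $r$ and $a$ but not always $\delta_{\psi S}$; so I would argue $\delta_{\varphi S}$ is preserved (it cannot jump during a continuous degeneration of the branch curve since it is a discrete invariant of the lattice embedding $S\hookrightarrow\bl_{K3}^{\psi}$, which varies continuously in the family $C_t$), or, where Tables \ref{3-1-1-delta_F_1}--\ref{3-1-1-delta_F_0} list both values of $\delta$ for the same $(r,a)$ (the cells with ``$0,1$''), observe that the starting isometry class of type $(\bu,-\id)$ already fixes $\delta_\psi$ and that the degeneration inherits it via the inclusion $S\subset\bl_{K3}$ being unchanged. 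This continuity/deformation-invariance claim is \textbf{the main obstacle}: one must make precise that as $t\to 0$ and we pass to the double cover branched over $A^\prime_1$ (with its one node or isolated point or cusp resolved to recover a $(3,1,1)$-surface), the marking can be propagated so that the associated involution's discriminant-form data are literally constant; this is where Itenberg-style monodromy/period arguments and the ($\da\br$)-nondegeneracy of type $((3,1,1),-\id)$ surfaces (which guarantees Theorem \ref{theorem2005moduli} applies throughout) are needed. Once Steps 1--3 are in place, all the $((X,\tau,\varphi_-),\alpha)$ arising from a fixed-type degeneration share $(r,a,\delta_{\varphi S},H)$, hence by Theorem \ref{moduli311} their associated integral involutions are isometric with respect to $G=\{\id\}$, which is the assertion.
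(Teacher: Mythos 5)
Your proposal takes essentially the same route as the paper: the paper offers no separate argument for this lemma beyond the phrase ``Comparing the data in Theorem \ref{isotopy-F4-double} and Theorem \ref{degenerations-F4-re-arran}'' followed by $\Box$, i.e.\ the proof \emph{is} the reduction to the complete invariants $(r(\psi),a(\psi),\delta_{\psi S},H(\psi))$ of Theorem \ref{moduli311} plus the bookkeeping that each degeneration type sends $(g,k)$ to a fixed $(\alpha,\beta)$, whence $r=9+\alpha-\beta$, $a=9-\alpha-\beta$ (resp.\ $8\pm\cdots$ in the Node (2) case) recovers the same $(r_0,a_0)$ for all three types, and $H=0$ is forced by the choice of $\varphi_-$. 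Your Steps 1--2 reproduce exactly this. The one point you flag as ``the main obstacle,'' the constancy of $\delta_{\psi S}$ across the degenerations (needed precisely for the $(r,a)$ cells where both $\delta=0$ and $\delta=1$ occur, e.g.\ $(6,4)$ with $H=0$), is not addressed by the paper either: the tables simply assert the matching of the $\delta$ column. So your write-up is at least as complete as the paper's, and your honest identification of the $\delta_{\psi S}$ step points at a detail the paper leaves implicit rather than at a defect specific to your argument.
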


Lemma \ref{degene-from-nonsing} is an analogy of Proposition 3.3 of \cite{Itenberg92}.

\bigskip

\section{Appendix:\ Period domains and further problems} \label{period domain and problems}

We use the terminology defined in Subsection \ref{real_2-elementary K3}, 
and 
review the formulations of period domains of 
marked real $2$-elementary K3 surfaces in \cite{NikulinSaito05}, \cite{Itenberg92}. 


\begin{lemma}\label{same-associated-invol}
Let $((X,\tau,\varphi),\ \alpha)$ and $((X^\prime,\tau^\prime,\varphi^\prime),\ \alpha^\prime)$ be 
two marked real $2$-elementary K3 surfaces of type $(S,\theta)$. 
Let $(\bl_{K3},\psi)$ and $(\bl_{K3},\psi^\prime)$ be their associated integral involutions respectively. 
Suppose that $f$ is an {\bf isometry} with respect to the group $G$ 
from $(\bl_{K3},\psi)$ to $(\bl_{K3},\psi^\prime)$. 
Then $((X^\prime,\tau^\prime,\varphi^\prime),\ f^{-1} \circ \alpha^\prime)$ 
is also a marked real $2$-elementary K3 surface of type $(S,\theta)$, 
and its associated integral involution is also $(\bl_{K3},\psi)$. 

Especially, if $f$ is an {\bf automorphism} of $(\bl_{K3},\psi)$ with respect to the group $G$, 
then 
$((X,\tau,\varphi),\ f \circ \alpha)$ 
is also a marked real $2$-elementary K3 surface of type $(S,\theta)$, 
and its associated integral involution is also $(\bl_{K3},\psi)$. 
(Recall Definition \ref{auto-psi}.) 
\end{lemma}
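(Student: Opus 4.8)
The plan is to take $\beta:=f^{-1}\circ\alpha'$, check that $((X',\tau',\varphi'),\beta)$ meets the four requirements of Definition~\ref{marked_real_K3}, identify its associated integral involution by a short conjugation computation, and finally derive the special case by applying the first assertion to $f^{-1}$ in place of $f$.

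First I would extract two elementary facts from the hypothesis that $f$ is an isometry with respect to $G$. Since $f(S)=S$, restricting the identity $\psi'=f\circ\psi\circ f^{-1}$ to $S$ and using that $\psi$ and $\psi'$ both restrict to $\theta$ on $S$ gives $\theta=(f|_S)\circ\theta\circ(f|_S)^{-1}$; thus $f|_S$ commutes with $\theta$ (as already noted, this makes $f|_S$ an automorphism of $(S,\theta)$). Secondly, since $f|_S\in G\subseteq W^{(-4)}(S,L)_\M$ stabilizes the fixed fundamental chamber $\M$, and since $\M$ lies in the cone $V^+(S)$ while the relation $\M\cdot\Delta(S)_+\ge 0$ characterizes $\Delta(S)_+$, the isometry $f|_S$ also satisfies $(f|_S)_\br(V^+(S))=V^+(S)$ and $f|_S(\Delta(S)_+)=\Delta(S)_+$.

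Granting these, the four conditions for $((X',\tau',\varphi'),\beta)$ are immediate. One has $\beta({H_2}_+(X', \bz))=f^{-1}(S)=S$; on ${H_2}_+(X', \bz)$, using $\alpha'\circ\varphi'_*=\theta\circ\alpha'$ together with the commutation of $f|_S$ and $\theta$, $\beta\circ\varphi'_*=f^{-1}\circ\theta\circ\alpha'=\theta\circ f^{-1}\circ\alpha'=\theta\circ\beta$; since $\beta_\br^{-1}=(\alpha'_\br)^{-1}\circ f_\br$, one gets $\beta_\br^{-1}(V^+(S))=(\alpha'_\br)^{-1}(V^+(S))$, which contains a hyperplane section of $X'$; and likewise $\beta^{-1}(\Delta(S)_+)=(\alpha')^{-1}(\Delta(S)_+)$, which contains only classes of effective curves of $X'$. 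Hence $((X',\tau',\varphi'),\beta)$ is a marked real $2$-elementary K3 surface of type $(S,\theta)$, and its associated integral involution is $\beta\circ\varphi'_*\circ\beta^{-1}=f^{-1}\circ\bigl(\alpha'\circ\varphi'_*\circ(\alpha')^{-1}\bigr)\circ f=f^{-1}\circ\psi'\circ f=f^{-1}\circ(f\circ\psi\circ f^{-1})\circ f=\psi$. For the final statement, the automorphisms of $(\bl_{K3},\psi)$ with respect to $G$ form a group (closure under composition is clear, and $f|_S\in G$ forces $(f|_S)^{-1}\in G$ since $G$ is generated by reflections), so $f^{-1}$ is again such an automorphism; applying the part just proved with $(X',\tau',\varphi',\alpha',\psi')$ taken to be $(X,\tau,\varphi,\alpha,\psi)$ and with $f$ replaced by $f^{-1}$ yields that $((X,\tau,\varphi),f\circ\alpha)$ is a marked real $2$-elementary K3 surface of type $(S,\theta)$ with associated integral involution $\psi$.

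I expect the only non-formal ingredient to be the transfer of the two positivity conditions — that $\beta_\br^{-1}(V^+(S))$ still contains a hyperplane section and that $\beta^{-1}(\Delta(S)_+)$ still consists of classes of effective curves — and this is exactly where the stronger hypothesis ``$f|_S\in G$'' is used rather than merely ``$f|_S$ is an automorphism of $(S,\theta)$'': it is $G$-membership that guarantees $f|_S(\M)=\M$, hence the invariance of $V^+(S)$ and $\Delta(S)_+$. Everything else is bookkeeping with the defining commutative diagrams.
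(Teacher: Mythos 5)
Your proposal is correct and follows essentially the same route as the paper: set $\beta=f^{-1}\circ\alpha'$, verify the four conditions of Definition~\ref{marked_real_K3} (using $f|_S\in G$, hence $f|_S(\M)=\M$, to transfer the two positivity conditions), and identify the associated involution by the conjugation $\beta\circ\varphi'_*\circ\beta^{-1}=f^{-1}\circ\psi'\circ f=\psi$. Your explicit derivation of the special case by applying the first part to $f^{-1}$ is a minor tidying of what the paper leaves implicit, and your isolation of exactly where ``$f|_S\in G$'' (rather than mere commutation with $\theta$) is needed matches the paper's emphasis.
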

\begin{proof}
We have $\psi^\prime \circ f = f \circ \psi$, $f(S)=S$, and ${f|}_S \in G$. 
We have 
$\alpha^\prime \circ {\varphi^\prime}_* \circ {\alpha^\prime}^{-1} \circ f = f \circ \alpha \circ \varphi_* \circ \alpha^{-1}$, 
and hence, 
$f^{-1} \circ \alpha^\prime \circ {\varphi^\prime}_* \circ {\alpha^\prime}^{-1} \circ f = \alpha \circ \varphi_* \circ \alpha^{-1}$. 
We have 
$(f^{-1} \circ \alpha^\prime) \circ {\varphi^\prime}_* \circ (f^{-1} \circ \alpha^\prime)^{-1}
 = \alpha \circ \varphi_* \circ \alpha^{-1} = \psi$. 
Here 
$(f^{-1} \circ \alpha^\prime) : H_2(X^\prime, \bz) \to \bl_{K3}$ is an isometry with 
$(f^{-1} \circ \alpha^\prime) ({H_2}_+(X^\prime, \bz)) = f^{-1}(S) = S$ and 
$
(f^{-1} \circ \alpha^\prime) \circ {\varphi^\prime}_* 
= f^{-1} \circ (\alpha^\prime \circ {\varphi^\prime}_*) 
= f^{-1} \circ (\psi^\prime \circ \alpha^\prime) 
= \psi \circ f^{-1} \circ \alpha^\prime
$.
Since $f^{-1} \circ \alpha^\prime ({H_2}_+(X^\prime, \bz)) =S$ and ${\psi |}_S = \theta$, 
we have 
$$
(f^{-1} \circ \alpha^\prime) \circ {\varphi^\prime}_* 
= \theta \circ f^{-1} \circ \alpha^\prime 
\text{on} \ {H_2}_+(X^\prime, \bz).
$$
Hence, 
$$(f^{-1} \circ \alpha^\prime) : H_2(X^\prime, \bz) \to \bl_{K3}$$
is another marking of $(X^\prime,\tau^\prime,\varphi^\prime)$. 
If we take this new marking of $(X^\prime,\tau^\prime,\varphi^\prime)$, then 
its associated integral involution is $\psi$, 
which is the same as $((X,\tau,\varphi),\ \alpha)$. 
Moreover, 
since ${f|}_S \in G$, we have $f(\M)=\M$. 
Hence, if we set $\beta := (f^{-1} \circ \alpha^\prime)$, then 
$\beta_{\br}^{-1}(V^+(S))$ contains a hyperplane section of $X^\prime$ 
and the set $\beta^{-1}(\Delta(S)_+)$ contains only classes of effective curves of $X^\prime$. 
\end{proof}

\bigskip

Let us {\bf fix} an integral involution 
$$(\bl_{K3},\psi)$$
of type $(S,\theta)$ through this subsection. 

\medskip

We set 
$$
\Omega_\psi := 
\{ v \ (\neq 0) \in \bl_{K3}\otimes \bc \ |\ 
v \cdot v =0, \ v \cdot \overline{v} >0, \ v \cdot S =0, \ 
\psi_{\bc}(v)=\overline{v} \}/\br^{\times} .
$$

\medskip

Let 
$$((X,\tau,\varphi),\ \alpha)$$
be a marked real $2$-elementary K3 surface of type $(S,\theta)$ 
satisfying 
$$\alpha \circ \varphi_* \circ \alpha^{-1} = \psi ,$$
namely,
$\psi$ is {\bf the associated integral involution with $((X,\tau,\varphi,)\ \alpha)$}.
We denote by 
$$H \ \ (\subset H_2(X,\bc))$$
the Poincare dual of the complex $1$-dimensional space $H^{2,0}(X)$. 
Then we have the complex $1$-dimensional subspace 
$\alpha_{\bc}(H)$ in $\bl_{K3}\otimes \bc$. 
Then we have 
$$\alpha_{\bc}(H) \ \in \Omega_\psi .$$

\begin{definition}[Periods]
We say $\alpha_{\bc}(H)$ 
the {\bf period} of 
a marked real $2$-elementary K3 surface $((X,\tau,\varphi),\ \alpha)$ of type $(S,\theta)$ 
satisfying $\alpha \circ \varphi_* \circ \alpha^{-1} = \psi$.
\end{definition}

\medskip

By Lemma \ref{same-associated-invol}, 
all marked real $2$-elementary K3 surfaces 
whose associated integral involutions are isometric to $(\bl_{K3},\psi)$ with respect to $G$ 
can be found in $\Omega_\psi$ 
if we change their markings appropriately.

A point in $\Omega_\psi$ 
is not necessarily the period of 
some marked real $2$-elementary K3 surface of type $(S,\theta)$ satisfying $\alpha \circ \varphi_* \circ \alpha^{-1} = \psi$. 

\begin{definition}[Equivalence]
We say a point $[v] \ (\in \Omega_\psi)$ is {\bf equivalent} to 
a point $[v^\prime] \ (\in \Omega_\psi)$ 
if $[v^\prime] = f_{\bc}([v])$ for 
an automorphism $f$ of $(\bl_{K3},\psi)$ of type $(S,\theta)$ with respect to the group $G$. 
\end{definition}

\begin{lemma}\label{equivalence-lemma}
If a point $[v] \ (\in \Omega_\psi)$ is equivalent to $[v^\prime] \ (\in \Omega_\psi)$ and 
$[v]$ is the period of 
some marked real $2$-elementary K3 surface $((X,\tau,\varphi),\ \alpha)$ of type $(S,\theta)$ 
satisfying $\alpha \circ \varphi_* \circ \alpha^{-1} = \psi$, 
then 
$[v^\prime]$ is also the period of 
a marked real $2$-elementary K3 surface $((X,\tau,\varphi),\ \alpha^\prime)$ of type $(S,\theta)$
satisfying 
$(\alpha^\prime) \circ \varphi_* \circ (\alpha^\prime)^{-1} = \psi$ 
where $\alpha^\prime$ is {\bf some another marking} of $(X,\tau,\varphi)$. 
\end{lemma}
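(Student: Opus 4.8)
The plan is to make the new marking completely explicit. By the definition of equivalence there is an automorphism $f$ of $(\bl_{K3},\psi)$ of type $(S,\theta)$ with respect to the group $G$ (Definition \ref{auto-psi}), so $f\circ\psi=\psi\circ f$, $f(S)=S$ and ${f|}_S\in G$, with $[v']=f_{\bc}([v])$. I would then take
$$\alpha' := f\circ\alpha : H_2(X,\bz)\to\bl_{K3},$$
which is again an isometry, and everything reduces to checking that $((X,\tau,\varphi),\,\alpha')$ is a marked real $2$-elementary K3 surface of type $(S,\theta)$ with $(\alpha')\circ\varphi_*\circ(\alpha')^{-1}=\psi$ and that its period is $[v']$.

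For the first part I would invoke the ``especially'' clause of Lemma \ref{same-associated-invol}, applied to the marked real $2$-elementary K3 surface $((X,\tau,\varphi),\,\alpha)$ --- whose associated integral involution is $\psi$ by the hypothesis $\alpha\circ\varphi_*\circ\alpha^{-1}=\psi$ --- and to the automorphism $f$: that clause says exactly that $((X,\tau,\varphi),\,f\circ\alpha)$ is again a marked real $2$-elementary K3 surface of type $(S,\theta)$ whose associated integral involution is again $(\bl_{K3},\psi)$. (Seen directly: $\alpha'({H_2}_+(X,\bz))=f(S)=S$; since ${f|}_S$ commutes with $\theta$ on $S$ — which follows from $f\circ\psi=\psi\circ f$ and $\psi|_S=\theta$ — and $\alpha\circ\varphi_*=\theta\circ\alpha$ on ${H_2}_+(X,\bz)$, also $\alpha'\circ\varphi_*=\theta\circ\alpha'$ on ${H_2}_+(X,\bz)$; since ${f|}_S\in G$ stabilizes $\M$, $f$ preserves $V^+(S)$ and $\Delta(S)_+$, so the hyperplane-section and effectivity conditions of Definition \ref{marked_real_K3} pass from $\alpha$ to $\alpha'$; and $\alpha'\circ\varphi_*\circ(\alpha')^{-1}=f\circ\psi\circ f^{-1}=\psi$.)

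The remaining point is the period. Writing $H\subset H_2(X,\bc)$ for the Poincar\'e dual of $H^{2,0}(X)$, the period of $((X,\tau,\varphi),\,\alpha)$ is by hypothesis $[v]=\alpha_{\bc}(H)\in\Omega_\psi$, so by $\bc$-linearity of $f$ the period of the new marked surface is
$$(\alpha')_{\bc}(H)=f_{\bc}\bigl(\alpha_{\bc}(H)\bigr)=f_{\bc}([v])=[v'].$$
That $[v']\in\Omega_\psi$ is automatic: $f_{\bc}$ is a $\bc$-linear isometry commuting with $\psi_{\bc}$ and fixing $S$, hence it carries $\Omega_\psi$ onto itself.

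I do not expect a genuine obstacle: the statement is essentially Lemma \ref{same-associated-invol} transported to the period domain, and the only steps needing a line of care are (i) that all four clauses of Definition \ref{marked_real_K3} survive the replacement $\alpha\mapsto f\circ\alpha$, which rests entirely on ${f|}_S\in G$ stabilizing $\M$, and (ii) the functoriality $(\alpha')_{\bc}=f_{\bc}\circ\alpha_{\bc}$ of the period under a change of marking. Both are already implicit in the proof of Lemma \ref{same-associated-invol}, so the write-up should be short.
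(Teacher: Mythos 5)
Your proposal is correct and follows essentially the same route as the paper: the paper also sets $\alpha' = f\circ\alpha$, invokes the ``especially'' clause of Lemma \ref{same-associated-invol} to see that $((X,\tau,\varphi),\,f\circ\alpha)$ is again a marked real $2$-elementary K3 surface of type $(S,\theta)$ with associated integral involution $\psi$, and computes the period as $(f\circ\alpha)_{\bc}(H)=f_{\bc}(\alpha_{\bc}(H))=f_{\bc}([v])=[v']$. Your parenthetical direct verification of the four marking conditions is additional detail the paper leaves implicit, but it does not change the argument.
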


\begin{proof}
Since $[v]$ is equivalent to $[v^\prime]$, 
we have $[v^\prime] = f_{\bc}([v])$ for 
an automorphism $f$ of $(\bl_{K3},\psi)$ of type $(S,\theta)$ with respect to the group $G$. 
By Lemma \ref{same-associated-invol}, 
$((X,\tau,\varphi),\ f \circ \alpha)$
is also a marked real $2$-elementary K3 surface of type $(S,\theta)$
satisfying 
$(f \circ \alpha) \circ \varphi_* \circ (f \circ \alpha)^{-1} = \psi$. 
Moreover, the period of $((X,\tau,\varphi),\ f \circ \alpha)$ is
$$(f \circ \alpha)_{\bc}(H) 
= f_{\bc}(\alpha_{\bc}(H)) 
= f_{\bc}([v])
= [v^\prime]$$
where $H \ \ (\subset H_2(X,\bc))$ is the Poincare dual of $H^{2,0}(X)$. 
It is sufficient that we set $\alpha^\prime = f \circ \alpha)$. 
\end{proof}

\begin{remark}[\cite{NikulinSaito05}] \label{equivalence-remark}
By the global Torelli theorem, 
if two periods are equivalent, 
then corresponding marked real $2$-elementary K3 surfaces are analytic isomorphic (see Definition \ref{analytic-iso}). 
The converse is also true. 
\end{remark}

The domain $\Omega_\psi$ has two connected components which are interchanged by $-\psi$. 
We see $-\psi$ is an {\bf automorphism} of $(\bl_{K3},\psi)$ with respect to the group $G$. 
Hence, by Lemma \ref{equivalence-lemma} and Remark \ref{equivalence-remark}, 
it is sufficient that we investigate the quotient space 
$$\Omega_\psi /-\psi .$$
We set 
$$\bl_{\pm} := \{ x \in \bl_{K3}\ |\ \psi (x) = \pm x \}.$$
Note that the lattices $\bl_{\pm}$ depend on the integral involution $\psi$. 

\bigskip

For $[v] \in \Omega_\psi$ \ ($v \in \bl_{K3} \otimes \bc$), 
we have the decomposition 
$$v = v_+ + v_-,$$
where $v_{\pm} \in \bl_{\pm} \otimes \br$. 

\bigskip

We restrict ourselves the case when 
$S \subset \bl_-$, namely, $\theta = - \id$, and set 
$$\bl_{-,S} := \bl_- \cap S^{\perp}.$$
$\bl_{-,S}$ also depends on the integral involution $\psi$.

Since $v_- \in \bl_{-,S}\otimes \br$ and $v_+^2 = v_-^2 > 0$, we see that 
$\bl_+,\ \bl_{-,S}$ is a hyperbolic lattice.

\bigskip

Let
$$\La_+, \ \ \La_{-,S}$$
be the hyperbolic spaces obtained from 
$\bl_+ \otimes \br,\ \bl_{-,S} \otimes \br$ respectively. 
Then we have 
$$\Omega_\psi /-\psi \ = \ \La_+ \times \La_{-,S}\ \ \ \ (\mbox{a direct product}).$$

\bigskip


We now fix a primitive hyperbolic $2$-elementary sublattice $S$ with 
$$S \cong (3,1,1)$$
of the K3 lattice $\bl_{K3}$ and 
set $\theta := - \id$. 
Remark that $G = \{ \id \}$ for this case (Remark \ref{G=1}). 

We use the terminology and facts stated in Subsection \ref{RealK3-311}. 
For a real $2$-elementary K3 surface $(X,\tau,\varphi)$ of type $(S,\theta) \cong ((3,1,1),- \id)$, 
the fixed point curve $A$ is a disjoint union of $A_0$ and $A_1$. 
$X$ has an elliptic fibration with the exceptional section $A_0$. 
Its unique reducible fiber is $E+F$. 
There are two kinds of the curve $F$, which is irreducible or not. 
The classes $[A_0]$,\ $[E]$ and $[F]$ generate ${H_2}_+(X, \bz)$. 
We have an orthogonal decomposition
$$
\bz ([A_0],\ [E]+[F]) \oplus \bz ([F])
$$
of ${H_2}_+(X, \bz)$. 
The subgroups $\bz ([A_0],\ [E]+[F]),\ \bz ([F])$ are isometric to 
the hyperbolic plane and the lattice $\langle -2 \rangle$ respectively. 

\medskip

We fix an integral involution $$(\bl_{K3},\psi)$$ of type $(S,\theta) \cong ((3,1,1),- \id)$. 
Moreover, we fix a decomposition 
$$
S = \bu \oplus \langle -2 \rangle ,
$$
where $\bu$ is a sublattice of $S$ isometric to the hyperbolic plane. 
Let 
$$\Fa$$
be the generator of $\langle -2 \rangle$ with $\M \cdot \Fa \ge 0$. 

\medskip

We consider marked real $2$-elementary K3 surfaces 
$((X,\tau,\varphi), \alpha)$ 
of type $(S,\theta) \cong ((3,1,1),- \id)$ (see Definition \ref{marked_real_K3}) 
such that 
$$
\alpha(\bz ([A_0],\ [E]+[F]))=\bu ,\ \ \ \alpha([F])=\Fa ,
$$
and 
$$\alpha \circ \varphi_* \circ \alpha^{-1} = \psi .$$
Any real $2$-elementary K3 surface $(X,\tau,\varphi)$ of type $(S,\theta) \cong ((3,1,1),- \id)$ 
has such a marking $\alpha$. 

We want to know a criterion for 
the double point of a real anti-bicanonical curve $\br \BL(A)$ 
with one real double point on $\br A^\prime_1$ on $\br \bff_4$ 
to be {\bf nondegenerate}. 
At present we can prove the following lemma. 

\begin{lemma}\label{criterion}
For $[\omega] \in \Omega_\psi /-\psi$, 
$[\omega]$ is the period of 
a marked real $2$-elementary K3 surface of type $(S,\theta) \cong ((3,1,1),- \id)$ (see Definition \ref{marked_real_K3}) 
such that 
$
\alpha(\bz ([A_0],\ [E]+[F]))=\bu ,\ \ \ \alpha([F])=\Fa ,
$
and 
$\alpha \circ \varphi_* \circ \alpha^{-1} = \psi$ 
obtained from 
a real anti-bicanonical curve $\br \BL(A)$ 
with one real {\bf nondegenerate} double point on $\br A^\prime_1$ on $\br \bff_4$ 
{\bf if} 
there are no $\bv \ (\neq \pm \Fa)$ in $\bl_{K3}$ satisfying that 
$\bv \cdot \omega = 0,\ \bv \cdot \bu =0$, and $\bv^2 = -2$. 
\end{lemma}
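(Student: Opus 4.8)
The plan is to combine surjectivity of the period map for marked real $2$-elementary K3 surfaces (as reviewed in this section and in \cite{NikulinSaito05}) with a short lattice computation that detects whether the reducible-fibre component $F$ is irreducible. Recall from Subsection~\ref{RealK3-311} and the discussion around Figure~\ref{a1c2} that, for a K3 surface of type $S\cong(3,1,1)$, the double point $P_0=\BL(f)$ of $\br A^\prime_1$ on $\br\bff_4$ is \emph{degenerate} (a real cusp) precisely when $F$ is reducible, say $F=F^\prime+F^{\prime\prime}$ with $F^\prime,F^{\prime\prime}$ nonsingular rational $(-2)$-curves conjugate under $\tau$, $F^\prime\cdot F^{\prime\prime}=1$ and $[F^\prime]+[F^{\prime\prime}]=[F]$. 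So the content of the lemma is: under the stated hypothesis on $\bv$, the curve $F$ is irreducible, hence $P_0$ is nondegenerate.

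First I would invoke the period theory of this section: given $[\omega]\in\Omega_\psi/-\psi$, surjectivity of the period map together with the global Torelli theorem (\cite{NikulinSaito05}, cf.\ Remark~\ref{equivalence-remark}) produces a marked real $2$-elementary K3 surface $((X,\tau,\varphi),\alpha)$ of type $(S,\theta)\cong((3,1,1),-\id)$ with period $[\omega]$ and $\alpha\circ\varphi_*\circ\alpha^{-1}=\psi$. Since $S\cong(3,1,1)$, the structure theory of Subsection~\ref{RealK3-311} (following \cite{AlexeevNikulin2006}) forces $X$ to carry the elliptic pencil $|E+F|$ with section $A_0$ and unique reducible fibre $E+F$, and the effectivity convention built into Definition~\ref{marked_real_K3} pins down the classes $[A_0],[E],[F]$, so one may take $\alpha$ with $\alpha(\bz([A_0],[E]+[F]))=\bu$ and $\alpha([F])=\Fa$. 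Contracting $f=\pi(F)$ on $Y=X/\tau$ yields $\BL:Y\to\bff_4$ and the real anti-bicanonical curve $\br\BL(A)=\br s\cup\br A^\prime_1$ with its single real double point $P_0$. Thus $[\omega]$ is automatically realized by such a curve, and it remains only to show $P_0$ is nondegenerate.

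Suppose it were degenerate. Then $F=F^\prime+F^{\prime\prime}$ as above; set $\bv:=\alpha([F^\prime])\in\bl_{K3}$. I would check the four properties: $\bv^2=(F^\prime)^2=-2$; $\bv\cdot\omega=0$, because $F^\prime$ is algebraic, so $[F^\prime]\in N(X)$ is orthogonal to the Poincar\'e dual of $H^{2,0}(X)$; $\bv\cdot\bu=0$, because $F^\prime$ is a component of a fibre of $|E+F|$ and hence orthogonal to the fibre class $[E]+[F]$, while $F^\prime\cdot A_0=0$ follows from $F\cdot A_0=0$ together with the effectivity of $F^\prime,F^{\prime\prime},A_0$; and $\bv\neq\pm\Fa$, since $\bv=\pm\Fa$ would force $[F^{\prime\prime}]=[F]-[F^\prime]$ to equal $0$ or $2[F]$, contradicting that $[F^{\prime\prime}]$ is the class of an irreducible curve of square $-2$. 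Hence $\bv$ is a class forbidden by the hypothesis, a contradiction; therefore $P_0$ is a nondegenerate double point, which is exactly the assertion. (This is the analogue for $\br\bff_4$ of Itenberg's criterion for plane sextics.)

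The genuinely new ingredient is the last, elementary observation: a cuspidal $P_0$ splits the fibre component $F$ into two $(-2)$-curves, one of which furnishes the forbidden vector $\bv$. The step that requires the most care — and which I expect to be the main obstacle to a clean write-up — is the first one: confirming, using the surjectivity statement of \cite{NikulinSaito05} and the chamber/effectivity conventions of Definition~\ref{marked_real_K3}, that \emph{every} $[\omega]\in\Omega_\psi/-\psi$ really is realized by a marked K3 compatible with the normalization $\alpha(\bz([A_0],[E]+[F]))=\bu$, $\alpha([F])=\Fa$ (in particular that no extra condition on $[\omega]$, beyond ``no such $\bv$'', is needed to guarantee this realization). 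Once that is in place, the lemma follows.
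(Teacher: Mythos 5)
Your proposal is correct and follows essentially the same route as the paper: the paper's proof is exactly the contrapositive lattice computation you give, namely that a degenerate $P_0$ forces $F=F^\prime+F^{\prime\prime}$ and that $\bv:=\alpha([F^\prime])$ then satisfies $\bv^2=-2$, $\bv\cdot\omega=0$, $\bv\cdot\bu=0$ and $\bv\neq\pm\Fa$. Your write-up is in fact slightly more complete, since you verify the four properties of $\bv$ explicitly and flag the realization of $[\omega]$ by a suitably normalized marked surface, a step the paper's proof leaves implicit.
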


\begin{proof}
For a real anti-bicanonical curve with one real degenerate double point on $\br \bff_4$, 
the corresponding marked real $2$-elementary K3 surface $(X,\tau,\varphi,\alpha)$ has 
a unique reducible fiber $E+F$ where 
$F^2=-2$, $F$ is the union of two nonsingular rational curves $F^\prime$ and $F^{\prime\prime}$. 
Here $F^\prime$ and $F^{\prime\prime}$ are conjugate by $\tau$ and $F^\prime \cdot F^{\prime\prime} = 1$. 
Hence, $(F^\prime)^2 = (F^{\prime\prime})^2 = -2$. (see Subsection \ref{RealK3-311}) 
Both $[F^\prime]$ and $[F^{\prime\prime}]$ are orthogonal to $[A_0]$ and $[E]+[F]$. 
We set $\bv := \alpha[F^\prime]$ or $\alpha[F^{\prime\prime}]$. 
Since $\bu = \alpha(\bz ([A_0],\ [E]+[F]))$, 
we have 
$\bv \ (\neq \pm \Fa)$, $\bv \cdot \omega = 0,\ \bv \cdot \bu =0$, and $\bv^2 = -2$ 
for the period $[\omega]$ of $(X,\tau,\varphi,\alpha)$. 
\end{proof}

\begin{problem}[cf. \cite{Itenberg92}, the top of p.281]
Is the converse of Lemma \ref{criterion} also true?
\end{problem}

If the converse of Lemma \ref{criterion} is true, then we can get 
the precise image ($\subset \La_+ \times \La_{-,S}$) of the period map 
on the set of all marked real $2$-elementary K3 surfaces of type $(S,\theta) \cong ((3,1,1),- \id)$ 
such that $\alpha(\bz ([A_0],\ [E]+[F]))=\bu$, $\alpha([F])=\Fa$, and $\alpha \circ \varphi_* \circ \alpha^{-1} = \psi$ 
obtained from real anti-bicanonical curves $\br \BL(A)$ with one real {\bf nondegenerate} double point on $\br A^\prime_1$ on $\br \bff_4$. 

Let $\bv$ be an element of $\bl_+$ with square $-2$. 
The reflection on $\La_+$ with respect to the real hyperplane $\bv^{\perp}$ is well-defined and 
it sends a point in $\Omega_\psi /-\psi$ to its equivalent point. 
Also, 
the reflection on $\La_{-,S}$ with respect to the real hyperplane $\bv^{\perp}$ 
where $\bv$ is an element of $\bl_{-,S}$ with square $-2$ is well-defined and 
it sends a point in $\Omega_\psi /-\psi$ to its equivalent point. 
Let 
$$\Omega_+ \ \ (\mbox{respectively,}\ \Omega_{-,S})$$
be the open fundamental domains with respect to the groups generated by 
the reflections with respect to the real hyperplanes $\bv^{\perp}$ satisfying 
$\bv^2=-2$ and $\bv \in \bl_+ \ \ (\mbox{respectively,}\ \bl_{-,S})$ and we consider the direct product 
$\Omega_+ \times \Omega_{-,S}$.

If the converse of Lemma \ref{criterion} is true, then 
the periods of marked real $2$-elementary K3 surfaces $(X,\tau,\varphi,\alpha)$ of type $((3,1,1),$ $- \id)$ obtained from 
some real anti-bicanonical curves with one real {\bf nondegenerate} double point on $\br \bff_4$ 
and satisfying $\alpha \circ \varphi_* \circ \alpha^{-1} = \psi$ 
are contained in $\Omega_+ \times \Omega_{-,S}$ up to equivalence. 
Moreover, we should remove more $\omega = \omega_+ + \omega_-$ orthogonal to 
some $\bv \in \bl_{K3}$ such that 
$\bv \ (\neq \pm \Fa)$,\ $\bv \cdot \bu =0$,\ $\bv^2 = -2$,\ 
$\bv \not\in \bl_+$ and $\bv \not\in \bl_{-,S}$. 
Hence, as the argument before Theorem 2.1 of Itenberg's paper \cite{Itenberg92}, 
we should remove 
some $(-6)$-orthogonal real hyperplanes (extra ``walls") from $\Omega_{-,S}$ or $\Omega_+$. 
This seems to be the reason why the connected components of the moduli space in the sense of \cite{NikulinSaito05} 
(equivalently, the isometry classes of integral involutions of the K3 lattice) 
cannot distinguish the topological types (node or isolated point) of the real double points (Remark \ref{realizability1}). 

\begin{problem}[cf. \cite{Itenberg92}, Theorem 2.1]
Formulate some period domain, say it were $\Omega^{\br \bff_4}_*$, 
whose connected components (up to equivalence) are in {\bf bijective correspondence} with 
the connected components of the moduli space (up to the action of the automorphism group of $\bff_4$ over $\br$) 
of real anti-bicanonical curves 
with one real nondegenerate double point on $\br \bff_4$ 
which yield marked real $2$-elementary K3 surfaces $((X,\tau,\varphi_-),\alpha)$ 
satisfying $\alpha \circ (\varphi_-)_* \circ \alpha^{-1} = \psi$. 
\end{problem}

\begin{remark}\label{choose-F-4}
For $(\bl_{K3},\psi)$, 
either $\bl_+$ or $\bl_{-,S}$ does not contain any element $\bv$ such that 
$\bv \equiv \Fa \ (\mbox{mod}\ 2\bl_{K3})$. 
For the anti-holomorphic involution ``$\varphi_-$" (recall Definition \ref{varphi+-}), 
$\bl_{-,S}$ contains an element $\bv$ such that $\bv \equiv \Fa \ (\mbox{mod}\ 2\bl_{K3})$. 
This phenomenon is similar to the argument in \cite{Itenberg92}. 
\end{remark}

\bigskip

We are also interested in the correspondences of 
the Coxeter graphs (see \cite{Itenberg92}, \cite{Itenberg94}) 
obtained from 
isometry classes of 
integral involutions of the K3 lattice $\bl_{K3}$ of type $(S,\theta) \cong ((3,1,1),- \id)$ 
and 
the non-increasing simplest degenerations of nonsingular curves. 
Problems concerning this topic are as follows. 

We fix 
an integral involution $(\bl_{K3},\psi)$ of type $(S,\theta) \cong ((3,1,1),- \id)$. 
Recall $\bl_{\pm} := \{ x \in \bl_{K3}\ |\ \psi (x) = \pm x \}$, 
$\bu$, 
and 
$\bl_{-,\bu} := \bl_- \cap \bu^{\perp}$. $\bl_{-,\bu}$ is also hyperbolic. 
Let 
$\La_{-,\bu}$ 
be the hyperbolic space obtained from $\bl_{-,\bu} \otimes \br$. 
The group generated by the reflections with respect to 
real hyperplanes $\bv^{\perp}$ such that $\bv \in \bl_{-,\bu}$ with $\bv^2=-2$ 
acts on the hyperbolic space $\La_{-,\bu}$. 
Let 
$\displaystyle \widetilde{\Omega_-}$ 
be one of its fundamental domains which has a face orthogonal to $F$. 
Let 
$C$ 
be the Coxeter graph of $\displaystyle \widetilde{\Omega_-}$.

\begin{definition}\label{K-def-F4}
\begin{itemize}
\item Let $C^\prime$ be the graph which is obtained by removing all thick or dotted edges from $C$. 
\item Consider the group of symmetries of $C^\prime$ obtained from 
some automorphism of $(\bl_{K3},\psi,\bu)$. 
Let $C''$ be the quotient graph of $C^\prime$ by the action of the group. 
\item Let $e$ be the vertex of $C$ corresponding to $F$ and 
let $e^\prime$ be the class (in $C''$) containing $e$. 
\item Let 
$K$
be the connected component of $C''$ containing $e^\prime$. 
\end{itemize}
\end{definition}

\begin{problem}[cf. \cite{Itenberg92}, Proposition 3.1]
Does the number (up to equivalence) of connected components of $\Omega^{\br \bff_4}_*$ 
coincide with 
that of vertices of the graph $K$ ?
\end{problem}

\noindent
{\bf Degenerations and the graph $P$.}\ \ \ 
We fix an isometry class of 
integral involutions of type $(S,\theta) \cong (\bu,- \id)$ with $(r(\psi),a(\psi),\delta_\psi)\not=(10,8,0),$ $(10,10,0)$. 
Then we get a real $2$-elementary K3 surface $(X,\tau,\varphi)$ of type $(S,\theta) \cong (\bu,- \id)$ and 
a real {\bf nonsingular} curve $A=s+A_1$ in $|-2K_{\bff_4}|$ where 
$A_1$ is a real nonsingular curve in $|12c+3s|$ on $\br \bff_4$. 

We define the graphs $P$ as follows (See Definition \ref{def-degenerations}). 

\begin{definition}
\begin{itemize}
\item The {\bf vertices} of $P$ are 
all the {\bf rigid isotopy classes} of 
real curves $A^\prime_1$ in $|12c+3s|$ with one nondegenerate double point on $\bff_4$ 
obtained from the degenerations of types 1)---3) of 
the real nonsingular curve $A_1$ with $\varphi$. 

\item Two vertices of $P$ are connected by an {\bf edge} if 
one rigid isotopy class is obtained from the conjunction of the ovals $\E_1$ and 
the non-contractible component of $A_1$ (Conjunction 1)), and 
the other rigid isotopy class is obtained from the contraction of the oval $\E_1$ (Contraction 3)). 

\item Two vertices of $P$ are connected by an {\bf edge} if 
one rigid isotopy class is obtained from the conjunction of the ovals $\E_1$ and $\E_2$ of 
$A_1$ (Conjunction 2)), and 
the other rigid isotopy class is obtained from the contraction of the oval $\E_1$ (Contraction 3)). 
\end{itemize}
\end{definition}


\begin{problem}[cf. \cite{Itenberg92}, Proposition 3.4]
Fix a real {\bf nonsingular} curve $A=s+A_1$ in $|-2K_{\bff_4}|$ where 
$A_1$ is a real nonsingular curve in $|12c+3s|$ on $\br \bff_4$ 
and  $\varphi$ as above. 

Let us get an {\bf arbitrary} 
real curve $A^\prime_1$ in $|12c+3s|$ with one nondegenerate double point on $\bff_4$ 
obtained from one of the degenerations of types 1)---3) of $A_1$ with $\varphi$, and 
construct the graph $K$ (see Lemma \ref{degene-from-nonsing} and Definition \ref{K-def-F4}) from $A^\prime_1$ and $\varphi_-$. 
Then, is the graph $K$ isomorphic to the graph $P$ ?
\end{problem}


\end{document}